\theoremstyle{plain}
\newtheorem{thm}{Theorem}[section]
\newtheorem*{thm*}{Theorem}
\newtheorem{lemme}[thm]{Lemma}
\newtheorem{prop}[thm]{Proposition}
\newtheorem{cor}[thm]{Corollary}
\newtheorem*{conj*}{Conjecture}
\newtheorem{thmintro}{Theorem} 
\newtheorem{propintro}[thmintro]{Proposition}
\theoremstyle{definition}
\newtheorem{defn}[thm]{Definition}
\newtheorem*{defn*}{Definition}
\newtheorem*{question*}{Question}
\theoremstyle{remark}
\newtheorem{rem}[thm]{Remark}
\DeclarePairedDelimiter\abs{\lvert}{\rvert} 
\newcommand*\conjug[1]{\overline{#1}} 
\renewcommand{\Re}{\operatorname{Re}}
\renewcommand{\Im}{\operatorname{Im}}
\renewcommand*\setminus{\ensuremath{{}-{}}}
\newcommand{\sslash}{/\!\!/} 
\newcommand{\HH}{\mathbb{H}}
\newcommand{\CC}{\mathbb{C}}
\newcommand{\RR}{\mathbb{R}}
\newcommand{\ZZ}{\mathbb{Z}}
\newcommand{\NN}{\mathbb{N}}
\newcommand{\galCR}{\mathrm{Gal}(\CC/\RR)}
\DeclareMathOperator{\Hom}{Hom} 
\DeclareMathOperator{\Span}{Span} 
\DeclareMathOperator{\ev}{ev} 
\DeclareMathOperator{\tr}{tr} 
\DeclareMathOperator{\Ad}{Ad}
\author{Miguel Acosta}
\address{
	Département de Mathématiques, 
	Université du Luxembourg, 
	Maison du Nombre, 
	6, Avenue de la Fonte, 
	L-4364 Esch-sur-Alzette, 
	Luxembourg.}
\email{miguel.acosta@uni.lu}
\thanks{Miguel Acosta was partially supported by the grants R-STR-8023-00-B "MnLU-MESR CAFE-AutoFi" and R-AGR-3172-10-C "FNR-OPEN".} 
\title{On GIT quotients and real forms}
\date{\today}  
\keywords{GIT, real forms, character varieties, Hausdorffization}
\subjclass[2010]{14L24, 14M35}
\begin{document}

\begin{abstract}
	We consider actions of complex algebraic groups $\mathbf{G}$ on complex algebraic varieties $\mathbf{X}$, coming from actions of real forms $G$ of $\mathbf{G}$ and $X$ of $\mathbf{X}$.
	We explore the links between the real points of the complex GIT quotient $\mathbf{X /\!\!/ G}$ and the real GIT quotient $X /\!\!/ G$ defined by Richardson and Slodowy.
	We prove that some type of real points of $\mathbf{X /\!\!/ G}$ can be lifted to a quotient of the form $X /\!\!/ G$ maybe after changing the real forms, and we link the number of possible lifts to a co-homology set. We apply then the results to character varieties, and study the particular case of the $\mathrm{SL}_3(\mathbb{C})$-character variety for $\mathbb{Z}$.
\end{abstract}

\maketitle

\section{Introduction}

 Needless to say, group actions are a very common and powerful tool in mathematics. 
 Given an action of a group on some space, a question rising naturally is to understand the space of its orbits.
 A geometric situation where this study is essential is given by the Ehresmann-Thurston principle in the setting of $(G,X)$-structures: deformation spaces of such structures on a manifold $M$ are locally parametrized by representations $\rho \in \Hom(\pi_1(M) , G)$ considered up to $G$-conjugation (see for example \cite{bergeron_gelander} or \cite{canary_epstein_green}). The corresponding spaces of orbits, obtained as quotients of $\Hom(\pi_1(M),G)$ by $G$ are the so-called \emph{character varieties}, and are a crucial tool for studying geometric structures.
 
 In a more general setting, given an action of a group $G$ on a space $X$, we would like to understand the space of $G$-orbits of $X$: the \emph{quotient} of $X$ by $G$. 
 In this article, we choose to work with a complex algebraic group $\mathbf{G}$ acting on a complex algebraic variety $\mathbf{X}$, as well as analog real objects $G$ and $X$.
 In this context, we have several additional structures that help to understand the space of orbits, since our spaces $\mathbf{X}$ (resp.\ $X$) are Hausdorff topological spaces and algebraic varieties.
 Thus, in addition to the categories of sets and topological spaces, we can consider the quotient of $\mathbf{X}$ by $\mathbf{G}$ in the setting of other categories. We focus mainly in the categories $\mathsf{Htop}$ of Hausdorff topological spaces and $\mathsf{AffAlg}$ of complex affine varieties.
 
 On the one hand, the quotient topology on the usual quotient set $\mathbf{X/G}$ is not Hausdorff in general, but there is still always a well defined quotient of $\mathbf{X}$ by $\mathbf{G}$ in $\mathsf{Htop}$. It is obtained by considering the \emph{Hausdorffization} of $\mathbf{X/G}$, that is its largest Hausdorff quotient.
 On the other hand, if $\mathbf{G}$ is a complex reductive group, the Geometric Invariant Theory (GIT) allows to consider the quotient of $\mathbf{X}$ by $\mathbf{G}$ in $\mathsf{AffAlg}$ as well. In that case, the GIT-quotient coincides with the $\mathsf{Htop}$ quotient, and is denoted $\mathbf{X\sslash G}$. We recall some general facts in \Cref{sect:background_and_setting}, like correspondence between points of $\mathbf{X \sslash G}$ and closed $\mathbf{G}$-orbits in $\mathbf{X}$, but refer to the notes of Brion \cite{brion_introduction_2010} for a more detailed exposition.

 As for the action of a real group $G$ on a real algebraic variety $X$, the algebraic quotients are no longer well defined. Nonetheless, Richardson and Slodowy construct in \cite{richardson_minimum_1990} a real GIT quotient $X \sslash G$, that is still the quotient of $X$ by $G$ in $\mathsf{Htop}$, and whose points are the closed $G$-orbits of $X$.
 In the case of character varieties, where $X = \mathrm{Hom}(\Gamma , G)$ for some finitely generated group $\Gamma$ and $G$ acts by conjugation, Parreau generalizes in \cite{parreau_espaces_2011} this construction to reductive groups $G$ over arbitrary local fields.
 
 If $G$ and $X$ are real forms of the complex group $\mathbf{G}$ and the complex variety $\mathbf{X}$, we have therefore two related real objects. First, we have the real GIT-quotient $X \sslash G$ constructed by Richardson and Slodowy in \cite{richardson_minimum_1990}, that is a Hausdorff space. Then, we can consider the real points of the complex GIT quotient $\mathbf{X \sslash G}$, that is defined over $\RR$. We know, thanks to Richardson and Slodowy, that there is a natural map $\beta \colon X \sslash G \to (\mathbf{X \sslash G})(\RR)$ that is proper and has finite fibers, but is not always surjective.
 The aim of this article is to have a better understanding of the links between the two quotients $X \sslash G$ and $\mathbf{X \sslash G}$.
 We call a point of $\mathbf{X \sslash G}$ \emph{neat} if it has a lift in $\mathbf{X}$ which is regular, has a closed $\mathbf{G}$-orbit and has stabilizer $Z(\mathbf{G})$. 
 We obtain the following result, that allows to lift \emph{neat} points of $(\mathbf{X \sslash G})(\RR)$ to quotients of the form $X \sslash G$, but maybe after changing the real forms $X$ and $G$.

 \begin{thmintro}[\Cref{thm:lift_real_irred}] \label{thm:intro_lifting_irred}
 	Let $\mathbf{G}$ be a connected complex reductive algebraic group, acting on a complex affine variety $\mathbf{X}$ with a trivial action of its center.
 	Let $G$ and $X$ be compatible real forms of $\mathbf{G}$ and $\mathbf{X}$ respectively.
 	Let $x \in \mathbf{X}$ be a neat point such that $\pi(x) \in (\mathbf{X \sslash G})(\RR)$.
 	Let $\mathbf{Y} \subset \mathbf{X}$ be the irreducible component of $x$.
 	Then, there exist real forms $G'$ of $\mathbf{G}$ and $Y'$ of $\mathbf{Y}$ such that
 	$G'$ is a twisted form of $G$,
 	$x \in Y'$ and $Y'$ is $G'$-stable.
 \end{thmintro}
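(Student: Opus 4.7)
The plan is to lift the Galois-compatibility from the level of the GIT quotient to the level of $\mathbf{X}$ by twisting both real structures simultaneously, using the single element $g \in \mathbf{G}$ that witnesses the failure of $\sigma(x) = x$.

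Let $\sigma \colon \mathbf{X} \to \mathbf{X}$ and $\tau \colon \mathbf{G} \to \mathbf{G}$ denote the antiholomorphic involutions corresponding to the compatible real forms $X$ and $G$. Since $\pi(x) \in (\mathbf{X} \sslash \mathbf{G})(\RR)$ and the $\mathbf{G}$-orbit of $x$ is closed, $\sigma(x)$ lies in $\mathbf{G} \cdot x$, so there exists $g \in \mathbf{G}$ with $\sigma(x) = g \cdot x$. Applying $\sigma$ once more and using compatibility, $x = \sigma(\sigma(x)) = \tau(g) g \cdot x$, so the neatness hypothesis $\mathrm{Stab}_{\mathbf{G}}(x) = Z(\mathbf{G})$ forces $z \coloneqq \tau(g) g \in Z(\mathbf{G})$; since $z$ is central one also gets $g \tau(g) = z$.

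Next I would define the twisted data $\tau'(h) \coloneqq g^{-1} \tau(h) g$ on $\mathbf{G}$ and $\sigma'(y) \coloneqq g^{-1} \cdot \sigma(y)$ on $\mathbf{X}$ and check four things: (i) $\tau'$ is antiholomorphic and $\tau' \circ \tau' = \mathrm{id}$, which reduces to $z^{-1} h z = h$ thanks to centrality of $z$; (ii) $\sigma' \circ \sigma' = \mathrm{id}$, which reduces to $z^{-1} \cdot y = y$ and is where the hypothesis that $Z(\mathbf{G})$ acts trivially is essential; (iii) compatibility $\sigma'(h \cdot y) = \tau'(h) \cdot \sigma'(y)$, a direct computation from $\sigma(h \cdot y) = \tau(h) \cdot \sigma(y)$; and (iv) $\sigma'(x) = g^{-1} \cdot \sigma(x) = g^{-1} g \cdot x = x$. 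Because $\tau' = \mathrm{Inn}(g^{-1}) \circ \tau$, the fixed-point group $G' \coloneqq \mathbf{G}^{\tau'}$ is by definition a twisted form of $G$.

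Finally I would transport the picture to $\mathbf{Y}$. Since $x$ is neat it is a regular point of $\mathbf{X}$, hence lies on a unique irreducible component, namely $\mathbf{Y}$. The connected group $\mathbf{G}$ permutes irreducible components, and regularity is preserved by any $g \in \mathbf{G}$, so $g \cdot \mathbf{Y}$ is the unique component containing $g \cdot x$; but this component must also contain $x$ (via the connected set $\mathbf{G} \cdot x$), so $g \cdot \mathbf{Y} = \mathbf{Y}$. Hence $\mathbf{Y}$ is $\mathbf{G}$-stable, and in particular $G'$-stable. The same argument, applied with the antiholomorphic automorphism $\sigma'$ sending the regular point $x$ to $x$, shows that $\sigma'(\mathbf{Y}) = \mathbf{Y}$; thus $Y' \coloneqq \mathbf{Y}^{\sigma'}$ is a real form of $\mathbf{Y}$ containing $x$, and the pair $(G', Y')$ satisfies the conclusion.

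The only step I expect to require genuine care, rather than bookkeeping, is verifying involutivity of $\sigma'$: this is exactly where the assumption that $Z(\mathbf{G})$ acts trivially on $\mathbf{X}$ is indispensable, because without it one would instead obtain a $\mathbb{Z}/2$-gerbe-like obstruction measured by the class of $z \in Z(\mathbf{G})$ modulo norms $\tau(g')g'$, which is the kind of cohomological obstruction alluded to in the paper's abstract.
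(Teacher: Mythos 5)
Your proposal is correct and follows essentially the same route as the paper: writing $\sigma(x)=g\cdot x$, using neatness to force $\tau(g)g\in Z(\mathbf{G})$, twisting both antiholomorphic involutions by $g$, using the trivial central action precisely for the involutivity of $\sigma'$, and invoking the smooth-fixed-point criterion to conclude that $Y'$ is a real form of $\mathbf{Y}$. Your explicit check that $\mathbf{Y}$ is preserved by $\mathbf{G}$ and by $\sigma'$ is a small point the paper leaves implicit, but the construction is identical (your $g$ is the inverse of the paper's $h$).
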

 
 Moreover, the following proposition, that is mainly a consequence of Corollary 1 of \cite[Ch.\ I §5.4 prop.\ 36]{serre_cohomologie_1994d}, gives a bound on the cardinal of the fibers of the map $\beta \colon X \sslash G \to (\mathbf{X \sslash G})(\RR)$ in terms of cohomology sets with coefficients in the stabilizer of points of $\mathbf{X}$.
 Note that \Cref{prop:intro_bound_number_of_lifts} is proven for actions on vector bundles over surfaces in \cite[Proposition 2.4]{schaffhauser_finite_2016} and for moduli spaces of quivers over perfect fields in \cite[Proposition 3.3]{hoskins_rational_2017}.
 
 \begin{propintro}[\Cref{prop:bound_number_of_lifts}]\label{prop:intro_bound_number_of_lifts}
 	Let $x \in (\mathbf{X \sslash G})(\RR)$, $\tilde{x} \in \mathbf{X}$ be a lift of $x$ and $\mathbf{G}_{\tilde{x}}$ the stabilizer of $\tilde{x}$ in $\mathbf{G}$. Then,
 	there is a one-to-one correspondence between $\beta^{-1}(x)$ and the kernel of the natural map
 	\[ 
 	H^1(\galCR ,\mathbf{G}_{\tilde{x}} ) \to H^1(\galCR ,\mathbf{G} ).
 	 \]
 \end{propintro}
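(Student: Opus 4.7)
The plan is to identify the fiber $\beta^{-1}(x)$ with the set of $G$-orbits on the real locus of a single closed $\mathbf{G}$-orbit, and then invoke the classical orbit-counting result from Galois cohomology cited in the statement (Cor.~1 of Prop.~36 in \cite[Ch.~I §5.4]{serre_cohomologie_1994d}). First, since points of $\mathbf{X}\sslash\mathbf{G}$ correspond bijectively to closed $\mathbf{G}$-orbits, I may replace $\tilde x$ by a point in the closed orbit $\mathbf{O}$ lying over $x$ without affecting either side of the desired bijection: this change only conjugates $\mathbf{G}_{\tilde x}$ within $\mathbf{G}$, hence leaves the cohomology sets canonically unchanged. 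Because $x$ is a real point, $\mathbf{O}$ is stable under complex conjugation.

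Next, by the theory of Richardson--Slodowy recalled in \Cref{sect:background_and_setting}, a closed $G$-orbit of $X$ maps to $x$ under $\beta$ iff it is contained in $\mathbf{O}\cap X$, and conversely every $G$-orbit contained in $\mathbf{O}\cap X$ is closed in $X$; hence
\[
\beta^{-1}(x)\ \simeq\ G\backslash(\mathbf{O}\cap X).
\]
The orbit map $\mathbf{G}\to\mathbf{O}$, $g\mapsto g\cdot\tilde x$, identifies $\mathbf{O}$ with the homogeneous space $\mathbf{G}/\mathbf{G}_{\tilde x}$. I would transport the real structure on $\mathbf{O}$ (inherited from $\mathbf{X}$) to this coset space by picking some $g_0\in\mathbf{G}$ with $\sigma(\tilde x)=g_0\cdot\tilde x$ (which exists because $\mathbf{O}$ is $\sigma$-stable) and endowing $\mathbf{G}_{\tilde x}$ with the semilinear action $h\mapsto g_0^{-1}\sigma(h)g_0$. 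Cor.~1 of Prop.~36 in \cite{serre_cohomologie_1994d} then produces a natural bijection
\[
G\backslash (\mathbf{G}/\mathbf{G}_{\tilde x})(\RR)\ \longleftrightarrow\ \ker\!\bigl(H^1(\galCR,\mathbf{G}_{\tilde x})\to H^1(\galCR,\mathbf{G})\bigr),
\]
which combined with the previous step yields the claim.

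The main obstacle is the bookkeeping around the Galois structure: $\mathbf{G}_{\tilde x}$ does not a priori carry a $\galCR$-action, since $\tilde x$ itself need not be real. One has to check that the twisted action defined via $g_0$ is independent of the choice of $g_0$ up to canonical isomorphism on $H^1$, that the resulting twisted real form of $\mathbf{G}/\mathbf{G}_{\tilde x}$ coincides with the one inherited from $\mathbf{X}$, and that the hypotheses of Serre's corollary are satisfied in this semilinear setting. Once these compatibilities are verified, the proposition follows formally from the identification of $\beta^{-1}(x)$ with $\mathbf{G}(\RR)$-orbits on a real homogeneous space.
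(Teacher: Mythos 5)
Your outline follows the same route as the paper: identify $\beta^{-1}(x)$ with the set of $G$-orbits on $\mathbf{O}\cap X$ for the closed orbit $\mathbf{O}$ over $x$, and then recognize this as the orbit-counting statement of Corollary~1 of Prop.~36 in Serre --- which is exactly the reference the paper itself credits. The only real difference is that the paper does not invoke the corollary as a black box but re-proves it in situ: taking the lift $\tilde x$ to be a \emph{real} point of $X$ with closed orbit, it defines $\tilde\varphi(g\cdot\tilde x)=[g^{-1}\bar g]\in H^1(\galCR,\mathbf{G}_{\tilde x})$, checks in a few lines that this descends to an injection $\varphi$ on $(\mathbf{G}\tilde x\cap X)/G$, and then shows the image is exactly the kernel: one inclusion is immediate since $g^{-1}\bar g$ is a coboundary in $\mathbf{G}$, and for the other, given $h=g^{-1}\bar g\in\mathbf{G}_{\tilde x}$ one checks that $y=g\tilde x$ is fixed by conjugation and that $\tilde\varphi(y)=[h]$.

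The genuine gap in your write-up is the point you yourself flag and then defer: the Galois structure on $\mathbf{G}_{\tilde x}$ when $\tilde x$ is not real. The twisted map $h\mapsto g_0^{-1}\sigma(h)g_0$ squares to conjugation by $c=\sigma(g_0)g_0$, which lies in $\mathbf{G}_{\tilde x}$ but is not the identity in general; so unless $c$ is central in $\mathbf{G}_{\tilde x}$ this map is not an involution and $H^1$ with respect to it is simply not defined without passing to the full formalism of twisting by cocycles. In that formalism the relevant set is a fiber of the map on $H^1$ over the class of $g_0$, not its kernel, and it can be empty --- consistent with the fact that $\beta^{-1}(x)=\emptyset$ when $\mathbf{O}$ is $\sigma$-stable but has no real point, whereas a kernel of pointed sets is never empty. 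The clean fix, and the one the paper uses implicitly (its proof repeatedly uses $\bar{\tilde x}=\tilde x$), is to take $\tilde x\in X$: then $\mathbf{G}_{\tilde x}$ is $\sigma$-stable, you may take $g_0=e$, and every compatibility you list becomes vacuous. With that normalization your argument closes; without it, it does not.
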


 We can apply these results to our original motivation for the study of quotients of the form $X \sslash G$ and $\mathbf{X \sslash G}$, namely the character varieties, defined, for a finitely generated group $\Gamma$, as
 \[ \mathfrak{X}_\mathbf{G}(\Gamma)
 =
 \Hom(\Gamma , \mathbf{G}) \sslash \mathbf{G}. \]
 The link between the two quotients has already been studied in particular cases. 
 For example, Johnson and Millson study in \cite{johnson_deformation_1987} the real points of the character variety $\mathfrak{X}_{\mathrm{SO}(n+2,\CC)}(\Gamma)$, where $\Gamma$ is a surface group and the corresponding real form of $\mathrm{SO}_{n+2}(\CC)$ is $\mathrm{SO}(n+1,1)$. In particular, they prove that the map $\beta$ is one-to-one when restricted to quasi-Fuchsian representations\footnote{
  In \cite{johnson_deformation_1987}, given a surface group representation $\rho_0 \colon \Gamma \to \mathrm{SO}(n+1,1)$ that factors through $\mathrm{SO}(2,1)$, a representation $\rho \in \Hom(\Gamma , \mathrm{SO}(n+1,1) )$ is called \emph{quasi-Fuchsian} if it is topologically conjugate to $\rho_0$.
 	}.
 More recently, Casimiro, Florentino, Lawton and Oliveira prove in	\cite{casimiro_topology_2014} that there is a deformation retraction between $G$-character varieties for free groups $\mathbb{F}_r$ and the corresponding quotients $\Hom(\mathbb{F}_r , K ) /K$ where $K$ is the maximal compact subgroup of $G$, and observe the links between real points of a complex character variety and the points coming from a real form in some particular examples.
 The author has also studied the particular cases of real points of $\mathrm{SL}_n(\CC)$-character varieties in \cite{acosta_character_2019a}, as well as for classical complex groups in \cite{acosta_character_2019}.
 Here, we obtain the following result, that generalizes all those cases. 
 
 \begin{thmintro}[\Cref{thm:lift_real_characters}]\label{thm:intro_lift_real_characters}
 	Let $\Gamma$ be a finitely generated group and
 	$\mathbf{G}$ a connected complex reductive algebraic group.
 	Let $\Phi$ be an antiholomorphic involution of $\mathfrak{X}_{\mathbf{G}}(\Gamma)$ induced by a real form $G$ of $\mathbf{G}$.
 	Let $\rho \in \Hom(\Gamma , \mathbf{G})$ be a good representation such that $\Phi(\chi_\rho) = \chi_\rho$.
 	Then, there exists a real form $G'$ of $\mathbf{G}$ such that 
 	$\rho \in \Hom(\Gamma,G')$.
 \end{thmintro}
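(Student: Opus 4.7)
The natural plan is to deduce this statement from \Cref{thm:intro_lifting_irred} applied to the conjugation action of $\mathbf{G}$ on $\mathbf{X} := \Hom(\Gamma, \mathbf{G})$. The center $Z(\mathbf{G})$ visibly acts trivially, and the real form $G$ of $\mathbf{G}$ induces a compatible real form $X := \Hom(\Gamma, G)$ of $\mathbf{X}$, cut out pointwise by the antiholomorphic involution $\sigma$ defining $G$. Under the identification $\mathfrak{X}_\mathbf{G}(\Gamma) = \mathbf{X} \sslash \mathbf{G}$, the involution $\Phi$ is precisely the Galois involution of the real structure coming from $X$, so the assumption $\Phi(\chi_\rho) = \chi_\rho$ translates directly into $\pi(\rho) \in (\mathbf{X} \sslash \mathbf{G})(\RR)$.

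The next step is to verify that $\rho$ is neat in the sense required by \Cref{thm:intro_lifting_irred}. By definition, a good representation has closed $\mathbf{G}$-orbit with stabilizer equal to $Z(\mathbf{G})$, which supplies the orbit-theoretic part of neatness. The remaining regularity requirement follows from the standard fact that the smooth locus of $\Hom(\Gamma,\mathbf{G})$ contains all good representations, a local computation in group cohomology with values in $\mathfrak{g}$ once one knows the stabilizer is central. With the hypotheses in place, \Cref{thm:intro_lifting_irred} furnishes a twisted real form $G'$ of $\mathbf{G}$ and a real form $Y'$ of the irreducible component $\mathbf{Y}$ of $\rho$ in $\mathbf{X}$, such that $\rho \in Y'$ and $Y'$ is $G'$-stable.

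It remains to upgrade the conclusion $\rho \in Y'$ to the sharper statement $\rho \in \Hom(\Gamma, G')$. Writing the twist of $G$ into $G'$ as $\sigma' = \mathrm{int}(c) \circ \sigma$ for some $c \in \mathbf{G}$ with $c\sigma(c) \in Z(\mathbf{G})$, one checks that the twist of the real structure on $\mathbf{X}$ by the same cocycle is $\rho \mapsto \sigma'\!\circ\rho$, whose fixed locus inside $\mathbf{X}$ is exactly $\Hom(\Gamma, G')$. Since the real structure on $\mathbf{Y}$ produced in the proof of \Cref{thm:intro_lifting_irred} is, by construction, the restriction to $\mathbf{Y}$ of this twisted real structure on $\mathbf{X}$, we obtain $Y' \subset \mathbf{Y} \cap \Hom(\Gamma, G')$ and hence $\rho \in \Hom(\Gamma, G')$.

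The main obstacle will be this last compatibility check, namely matching the real structure produced on $\mathbf{Y}$ by \Cref{thm:intro_lifting_irred} with the restriction of the twisted real structure on $\mathbf{X}$ induced by $G'$. If this identification is not obtained directly from the proof of \Cref{thm:intro_lifting_irred}, a self-contained argument is available: since $\rho$ is good and $[\sigma\circ\rho] = [\rho]$, there exists $g \in \mathbf{G}$ with $\sigma\circ\rho = g\rho g^{-1}$; applying $\sigma$ and using that the stabilizer of $\rho$ equals $Z(\mathbf{G})$ yields $\sigma(g)g \in Z(\mathbf{G})$, so that $\sigma'(h) := g^{-1}\sigma(h)g$ is an antiholomorphic involution of $\mathbf{G}$ whose fixed points form a real form $G'$ twisting $G$. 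By construction $\sigma'(\rho(\gamma)) = \rho(\gamma)$ for every $\gamma \in \Gamma$, so $\rho$ takes values in $G'$, as required.
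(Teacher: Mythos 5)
Your main route has a genuine gap at the point where you verify neatness: the claim that ``the smooth locus of $\Hom(\Gamma,\mathbf{G})$ contains all good representations'' is false for a general finitely generated group $\Gamma$. Goodness controls the orbit (closed, with central stabilizer) but says nothing about the local structure of the representation variety: the obstructions live in $H^2(\Gamma,\mathfrak{g}_{\Ad\rho})$, and there are finitely presented groups (e.g.\ K\"ahler groups, where Goldman--Millson theory produces quadratic singularities) whose representation varieties are singular at irreducible, even good, representations. So you cannot apply \Cref{thm:lift_real_irred} directly to $\mathbf{X}=\Hom(\Gamma,\mathbf{G})$; this is precisely the issue the paper's proof is designed around. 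The paper first replaces $\Gamma$ by the free group $\Gamma'$ on a chosen generating set, using the closed $\mathbf{G}$-equivariant immersion $\Hom(\Gamma,\mathbf{G})\hookrightarrow\Hom(\Gamma',\mathbf{G})\simeq\mathbf{G}^s$; there every point is regular, so a good $\rho$ is neat, \Cref{thm:lift_real_irred} applies, and the resulting $Y'$ is identified with $\Hom(\Gamma',G')$ exactly as in your third paragraph, whence $\rho\in\Hom(\Gamma,G')$.

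That said, your closing self-contained paragraph is correct and complete, and it proves the theorem with no regularity input at all: $\rho$ and $\sigma\circ\rho$ both have closed orbits and the same image in the GIT quotient, hence are conjugate, $\sigma\circ\rho=g\rho g^{-1}$; centrality of the stabilizer gives $\sigma(g)g\in Z(\mathbf{G})$, so $\sigma'=\mathrm{int}(g^{-1})\circ\sigma$ is an antiholomorphic involution fixing $\rho(\Gamma)$ pointwise, and its fixed-point set is a real form since $\mathbf{G}$ is connected (\Cref{prop:simple_point_real_form}). This is exactly the computation at the heart of the proof of \Cref{thm:lift_real_irred}, extracted at the only place it is needed here; the regularity hypothesis in that theorem serves only to guarantee that $Y'$ is a real form of $\mathbf{Y}$, which the present statement does not require. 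Either repair therefore works: lead with your last paragraph as the actual proof, or insert the free-group reduction before invoking \Cref{thm:lift_real_irred}.
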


 Finally, we study the particular case of the $\mathrm{SL}_3(\CC)$-character variety of $\ZZ$, with its three real forms $\mathrm{SL}_3(\RR)$, $\mathrm{SU}(3)$ and $\mathrm{SU}(2,1)$. It is a relatively simple case but has a particular behavior, since we find situations where the cardinal of the fibers given by \Cref{prop:intro_bound_number_of_lifts} take different values, and the real GIT quotient that is not a manifold.

\subsection*{Outline of the article}
The article is organized as follows. First, we recall some background and fix notation and the setting in \Cref{sect:background_and_setting}. We focus on algebraic actions and real forms of complex groups and varieties; we also recall the setting for complex and real GIT quotients.
In \Cref{section:lifting_real_points}, we consider the problem of lifting real points of the complex GIT quotient to the corresponding real GIT quotient, study the number of lifts in each fiber and prove \Cref{thm:intro_lifting_irred} and \Cref{prop:intro_bound_number_of_lifts}.
Then, in \Cref{section:char-var}, we apply the previous results to character varieties and obtain \Cref{thm:intro_lift_real_characters}.
Finally, in \Cref{section:example}, we study in detail a particular example, namely the $\mathrm{SL}_3(\CC)$-character variety of $\ZZ$, where $\mathrm{SL}_3(\CC)$ acts on itself by inner automorphisms.

\subsection*{Acknowledgements}
The author would like to thank Pierre-Emmanuel Chaput for a long and helpful discussion about the background on $\mathbf{G}$-varieties and real forms and many comments on the article. He would like to thank as well Michel Brion, Gerald Schwarz, Ralph Bremigan, Mikhail Borovoi and Florent Schaffhauser for a very short but fruitful correspondence.

\section{Background and setting}\label{sect:background_and_setting}
 In this article, we consider vector spaces, algebraic groups and algebraic varieties that can be either real or complex.
 We will use a bold font for complex objects and the usual font for real ones. This way, $\mathbf{V}$, $\mathbf{X}$ and $\mathbf{G}$ will be a complex vector space, a complex affine variety and a complex algebraic group, while $V$, $X$ and $G$ will be real forms of them.
 Our setting will be of an action of a complex algebraic group $\mathbf{G}$ on an affine algebraic variety $\mathbf{X}$, coming from an action of a real form $G$ of $\mathbf{G}$ on a real form $X$ of $\mathbf{X}$. In this section, we fix notation for these objects and settle our preferred viewpoint on them.
 For the algebraic actions and GIT quotients, we refer to the notes of Brion \cite{brion_introduction_2010}, whereas for the setting on real forms we follow the book of Onishchik and Vinberg \cite{onishchik_lie_1990a}. Finally, we describe the real GIT quotient constructed by Richardson and Slodowy in \cite{richardson_minimum_1990}.

\subsection{Algebraic actions}

 An \emph{affine algebraic variety}, or simply \emph{variety}, is a subset $\mathbf{X}$ of some $\CC^n$ defined by polynomial equations. The maps between two algebraic varieties that are given by polynomials in the coordinates are called \emph{algebraic maps}.
 In this way, two varieties are \emph {isomorphic} if there is an algebraic isomorphism between them.
 Thus, we can consider abstract algebraic affine varieties. However, in this article, we will always think of an affine variety as embedded in some complex vector space $\mathbf{V}$, that can change depending on our needs.
 Note that, in this setting, we do not ask affine varieties to be irreducible. We say that a complex affine variety $\mathbf{X}$ is \emph{defined over $\RR$} if there is an embedding of $\mathbf{X}$ in a complex vector space $\mathbf{V}$ and there is a set of real polynomials defining its image in $\mathbf{V}$. Equivalently, $\mathbf{X}$ is defined over $\RR$ if there is an antiholomorphic involutive automorphism of $\mathbf{X}$.
 
 We denote the ring of functions of a variety $\mathbf{X}$ by $\CC[\mathbf{X}]$. If $\mathbf{X}$ is embedded in a complex vector space $\mathbf{V}$, then $\CC[\mathbf{X}]$ is the $\CC$-algebra obtained by restricting to $\mathbf{X}$ the polynomial functions on $\mathbf{V}$. Recall that affine varieties are in one-to-one correspondence with their rings of functions, so $\mathbf{X}$ is determined up to isomorphism by $\CC[\mathbf{X}]$.
  
 A \emph{complex algebraic group} is a group $\mathbf{G}$ that is also an affine variety, and such that the inverse and the group product are algebraic functions. 
 An action of $\mathbf{G}$ on a variety $\mathbf{X}$ is \emph{algebraic} if the corresponding map $\mathbf{G} \times \mathbf{X} \to \mathbf{X}$ is algebraic.
 If $\mathbf{V}$ is a complex vector space, and $\mathbf{G}$ acts linearly, the action is called a \emph{representation} of $\mathbf{V}$. If, in addition, every point is in a finite-dimensional $\mathbf{G}$-subspace, then $\mathbf{V}$ is called a \emph{$\mathbf{G}$-module}.
 We recall the following proposition that states that, in order to study the actions of algebraic groups on varieties, we only need to understand the $\mathbf{G}$-modules.
 
\begin{prop}[Proposition 1.9 of \cite{brion_introduction_2010}]\label{prop:equivariant_immersion}
 Let $\mathbf{G}$ be an affine algebraic group acting on a variety $\mathbf{X}$. Then, there exists a $\mathbf{G}$-module $\mathbf{V}$, and a closed immersion $\mathbf{X} \hookrightarrow \mathbf{V}$ that is equivariant with respect to the action of $\mathbf{G}$.
\end{prop}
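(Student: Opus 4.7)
The plan is to exhibit the ambient $\mathbf{G}$-module as the dual of a well-chosen finite-dimensional subspace of the coordinate ring $\CC[\mathbf{X}]$, and to recover $\mathbf{X}$ inside it via evaluation. First I would endow $\CC[\mathbf{X}]$ with its natural left regular action of $\mathbf{G}$, defined by $(g \cdot f)(x) = f(g^{-1} \cdot x)$. Since the action map $\mathbf{G} \times \mathbf{X} \to \mathbf{X}$ is algebraic, it dualizes to a coaction $\Delta \colon \CC[\mathbf{X}] \to \CC[\mathbf{G}] \otimes \CC[\mathbf{X}]$. The key preliminary observation is that every $f \in \CC[\mathbf{X}]$ lies in a finite-dimensional $\mathbf{G}$-stable subspace: if $\Delta(f) = \sum_{i=1}^{n} a_i \otimes f_i$ with $a_i \in \CC[\mathbf{G}]$ and $f_i \in \CC[\mathbf{X}]$, then $g \cdot f = \sum_i a_i(g) f_i$ for every $g \in \mathbf{G}$, so all translates of $f$ sit inside $\Span(f_1, \ldots, f_n)$.

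Next I would use that $\mathbf{X}$ is affine, hence $\CC[\mathbf{X}]$ is finitely generated as a $\CC$-algebra. Picking generators $h_1, \ldots, h_r$ and letting $\mathbf{V}'$ be the sum of the finite-dimensional $\mathbf{G}$-stable subspaces furnished by the previous step for each $h_i$, one obtains a finite-dimensional $\mathbf{G}$-stable subspace $\mathbf{V}' \subset \CC[\mathbf{X}]$ that still contains a generating set of the algebra.

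Finally I would set $\mathbf{V} := (\mathbf{V}')^*$ with the contragredient $\mathbf{G}$-action and define $\iota \colon \mathbf{X} \to \mathbf{V}$ by $\iota(x) = \ev_x$, where $\ev_x(f) = f(x)$. Equivariance is a direct check: for $f \in \mathbf{V}'$ one has
\[
 \iota(g \cdot x)(f) = f(g \cdot x) = (g^{-1} \cdot f)(x) = \ev_x(g^{-1} \cdot f) = (g \cdot \ev_x)(f).
\]
The comorphism $\iota^* \colon \CC[\mathbf{V}] \to \CC[\mathbf{X}]$ sends the coordinate functions on $\mathbf{V}$ dual to a basis of $\mathbf{V}'$ onto that basis itself, so its image contains $h_1, \ldots, h_r$ and therefore all of $\CC[\mathbf{X}]$. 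Surjectivity of $\iota^*$ is equivalent to $\iota$ being a closed immersion, which concludes the argument.

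The only real obstacle is the first step — showing that $\CC[\mathbf{X}]$ is a rational representation, i.e.\ that every regular function is contained in a finite-dimensional $\mathbf{G}$-stable subspace. Once that is in hand, the rest of the proof is a formal unpacking of the Yoneda-style identification between morphisms $\mathbf{X} \to \mathbf{V}$ and $\CC$-algebra maps $\CC[\mathbf{V}] = \mathrm{Sym}(\mathbf{V}') \to \CC[\mathbf{X}]$, together with the fact that surjectivity on coordinate rings corresponds to closed immersion on varieties.
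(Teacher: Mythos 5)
Your argument is correct and is essentially the standard proof (the one in Brion's notes, which the paper simply cites for this statement): local finiteness of the $\mathbf{G}$-action on $\CC[\mathbf{X}]$ via the coaction, a finite-dimensional $\mathbf{G}$-stable subspace containing algebra generators, and the evaluation map into its dual. This is also exactly the strategy the paper itself carries out in the proof of \Cref{prop:equiv_embedding_and_real_form}, its real-form refinement of this proposition, so nothing further is needed.
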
 
 
 Recall that a $\mathbf{G}$-module $\mathbf{V}$ is \emph{simple} or \emph{irreducible} if if it has no proper non-zero submodule, and \emph{semi-simple} or \emph{completely reducible} if it is a direct sum of simple submodules.
 From now on, we will assume that the group $\mathbf{G}$ is \emph{reductive}, i.e.\ that its unipotent radical is trivial. We recall the following standard result, that gives other characterizations of complex reductive groups more adapted to our setting.
 
 \begin{prop}[Theorem 1.23 of \cite{brion_introduction_2010}]
 	Let $\mathbf{G}$ be a complex algebraic group. Then the following are equivalent:
 	\begin{enumerate}
 		\item The group $\mathbf{G}$ is reductive.
 		\item The group $\mathbf{G}$ contains no closed normal subgroup isomorphic to the additive group $\CC^n$ for some $n\geq 1$.
 		\item The group $\mathbf{G}$ (viewed as a Lie group) has a compact subgroup $K$ which is dense for the Zariski topology.
 		\item Every finite-dimensional $\mathbf{G}$-module is semi-simple.
 		\item Every $\mathbf{G}$-module is semi-simple.
 		\end{enumerate}
 \end{prop}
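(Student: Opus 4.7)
The plan is to prove the cyclic chain of implications $(3) \Rightarrow (5) \Rightarrow (4) \Rightarrow (2) \Rightarrow (1) \Rightarrow (3)$, together with the equivalence $(4) \Leftrightarrow (5)$ which follows from the standard local finiteness of algebraic actions: every element of a $\mathbf{G}$-module is contained in a finite-dimensional $\mathbf{G}$-submodule, so any $\mathbf{G}$-submodule is a directed union of finite-dimensional $\mathbf{G}$-submodules. This reduces semi-simplicity questions to the finite-dimensional case and back.

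For $(3) \Rightarrow (4)$, I would use Weyl's unitarian trick. Given a finite-dimensional $\mathbf{G}$-module $\mathbf{V}$, pick any Hermitian inner product and average it over the compact subgroup $K$ using its Haar measure to produce a $K$-invariant Hermitian form. Any $\mathbf{G}$-submodule $\mathbf{W} \subset \mathbf{V}$ then has an orthogonal complement $\mathbf{W}^\perp$ which is $K$-stable. Because orthogonality is a Zariski-closed condition and $K$ is Zariski dense in $\mathbf{G}$, the complement $\mathbf{W}^\perp$ is in fact $\mathbf{G}$-stable, giving the desired decomposition.

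For $(5) \Rightarrow (2)$, suppose $\mathbf{G}$ contains a closed normal subgroup $\mathbf{N} \cong \CC^n$. By \Cref{prop:equivariant_immersion}, I would embed $\mathbf{G}$ equivariantly in some $\mathbf{G}$-module $\mathbf{V}$ (via a faithful representation), and observe that the action of the additive, connected unipotent group $\mathbf{N}$ on any finite-dimensional $\mathbf{G}$-submodule is by unipotent endomorphisms (by the Lie–Kolchin theorem applied to $\mathbf{N}$). A non-trivial unipotent action on a module cannot be semi-simple, since a non-identity unipotent element admits a non-trivial fixed subspace without an invariant complement. The implication $(2) \Rightarrow (1)$ comes from the fact that the unipotent radical $R_u(\mathbf{G})$, if non-trivial, is a connected unipotent group, whose last non-trivial term of the derived series (or suitable central subgroup) is a closed, commutative, normal unipotent subgroup of $\mathbf{G}$; any such group is isomorphic to some $\CC^n$.

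The hardest step and the main obstacle is $(1) \Rightarrow (3)$, namely producing a Zariski-dense compact subgroup in a complex reductive group. The standard route is to invoke the existence of a compact real form: fix a faithful representation, identify a maximal compact subgroup $K \subset \mathbf{G}(\CC)$ via a Cartan involution and the polar decomposition $\mathbf{G}(\CC) = K \cdot \exp(i\mathfrak{k})$, and then verify Zariski density. For the density, one uses that $K$ is a real form of $\mathbf{G}$ in the sense that $\mathrm{Lie}(K) \otimes_\RR \CC = \mathrm{Lie}(\mathbf{G})$; hence the Zariski closure of $K$ has the full Lie algebra, so it contains the connected component of $\mathbf{G}$, and a case analysis on components (using that $K$ meets every component, which follows from connectedness considerations for the reductive case) completes the argument. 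Since the underlying structural statements are well documented, I would give references rather than reprove them, and concentrate the exposition on the transparent equivalences $(3) \Leftrightarrow (4) \Leftrightarrow (5)$ and the bridge to $(1) \Leftrightarrow (2)$.
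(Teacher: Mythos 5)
The paper does not prove this proposition at all --- it is quoted verbatim from Brion's notes (Theorem 1.23 of \cite{brion_introduction_2010}) --- so the only meaningful comparison is with the standard argument, which is essentially what you outline. Your chain of implications, the local-finiteness reduction $(4)\Leftrightarrow(5)$, Weyl's unitarian trick for $(3)\Rightarrow(4)$, the derived-series argument for $(2)\Rightarrow(1)$, and the appeal to compact real forms for $(1)\Rightarrow(3)$ are all the usual route and are fine as sketched (modulo the cited structure theory, and a minor misattribution: unipotence of the image of $\mathbf{N}$ follows from preservation of Jordan decomposition under algebraic homomorphisms, not from Lie--Kolchin, which gives triangularizability).

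There is, however, a genuine gap in your step $(4)\Rightarrow(2)$. You derive the contradiction from the observation that a non-identity unipotent operator has a fixed subspace with no invariant complement --- but that shows $V$ fails to be semi-simple as an $\mathbf{N}$-module (or as a module over the single element), whereas the hypothesis only gives semi-simplicity of $V$ as a $\mathbf{G}$-module. The fixed space of a unipotent element of $\mathbf{N}$ need not be $\mathbf{G}$-stable, so its lack of an $\mathbf{N}$-invariant complement does not contradict anything. You must actually use normality of $\mathbf{N}$ here (a Clifford-type argument). A clean fix: the subspace $V^{\mathbf{N}}$ of $\mathbf{N}$-fixed vectors is $\mathbf{G}$-stable because $\mathbf{N}$ is normal, and is non-zero by Kolchin's fixed-point theorem; if $V$ is a semi-simple $\mathbf{G}$-module, write $V = V^{\mathbf{N}}\oplus W$ with $W$ a $\mathbf{G}$-stable complement, and apply Kolchin again to $W$ to get $W^{\mathbf{N}}\neq 0$ unless $W=0$; hence $W=0$ and $\mathbf{N}$ acts trivially on $V$. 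Taking $V$ faithful (which exists for any affine algebraic group) then forces $\mathbf{N}=\{1\}$, the desired contradiction.
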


\subsection{The complex GIT quotient}
 We fix, once and for all, a reductive algebraic group $\mathbf{G}$, a variety $\mathbf{X}$ and an action of $\mathbf{G}$ on $\mathbf{X}$. When necessary, thanks to \Cref{prop:equivariant_immersion}, we will suppose that $\mathbf{X}$ is a subvariety of some finite dimensional vector space $\mathbf{V}$ and that the action is the restriction of a linear action of $\mathbf{G}$ on $\mathbf{X}$.
 
 We are interested in the space of $\mathbf{G}$-orbits of $\mathbf{X}$. If we consider the usual quotient $\mathbf{X / G}$ as a topological quotient, it is not Hausdorff in general, and has no particular structure.
 The Geometric Invariant Theory (GIT) allows to construct an algebraic variety $\mathbf{X \sslash G}$ and a quotient map $\pi : \mathbf{X} \to \mathbf{X \sslash G}$ satisfying the following properties:
 \begin{itemize}
 	\item The ring of functions $\CC[\mathbf{X \sslash G}]$ is the ring of $\mathbf{G}$-invariant functions of $\mathbf{X}$, that we denote $\CC[\mathbf{X}]^{\mathbf{G}}$.
 	\item The points of $\mathbf{X \sslash G}$ correspond to the closed $\mathbf{G}$-orbits in $\mathbf{X}$ 
 \end{itemize}
 
 Moreover, if $x \in \mathbf{X}$, then there is a unique closed orbit in the closure of $\mathbf{G}x$, and $\pi(x)$ is this precise orbit.
 Thus, $\mathbf{X \sslash G}$ is a quotient in the category $\mathsf{AffAlg}$ of affine algebraic varieties as well as in the category $\mathsf{Htop}$ of Hausdorff topological spaces. 
 
 \begin{rem}
 	If $\mathbf{G}$ acts on a finite dimensional vector space $\mathbf{V}$ and $\mathbf{X} \subset \mathbf{V}$ is a $\mathbf{G}$-invariant closed subset, then we can identify $\mathbf{X \sslash G}$ with the image of $\mathbf{X}$ by the projection map $\pi : \mathbf{V} \to \mathbf{V \sslash G}$.
 \end{rem}
 
\subsection{Real forms}

There are several ways to define real forms. We follow here the definition of Onishchik and Vinberg in \cite{onishchik_lie_1990a} of a real form of a complex vector space or a complex affine variety, that extends to complex algebraic groups.

Given a complex vector space $\mathbf{V}$, a real subspace $V$ is a \emph{real form of $V$} if an $\RR$-basis of $V$ is also a $\CC$-basis of $\mathbf{V}$. In a more abstract frame, $V$ is a real form of $\mathbf{V}$ if the inclusion $V \to \mathbf{V}$ induces an isomorphism $V \otimes_\RR \CC \xrightarrow{\sim} \mathbf{V}$. Observe that a real form of a complex vector space can also be defined by a complex conjugation: then, $V$ will be the set of fixed points of an involutive antiholomorphic automorphism of $\mathbf{V}$. 
This notion extends naturally to affine algebraic varieties. Here, a \emph{real algebraic variety} $X$ will be a subset of some $\RR^n$ defined by polynomial equations; it embeds naturally in a corresponding complex variety $\mathbf{X}$ by extending the scalars. Thus, we have the following definition of a real form.

\begin{defn}\label{def:real_form}
	Let $\mathbf{X}$ be a complex affine variety obtained by a field extension from a real affine variety $X$.
	We say that $X$ is a \emph{real form} of $\mathbf{X}$ if the identity embedding $X \subset \mathbf{X}$ extends to an isomorphism
	$X \otimes_\RR \CC \xrightarrow{\sim} \mathbf{X}$.
	On the level of the rings of functions, $X$ is a real form of $\mathbf{X}$ if and only if $\CC[\mathbf{X}] \simeq \RR[X] \otimes_\RR \CC$.
\end{defn}

A real form $X$ of $\mathbf{X}$ defines a unique involutive antiholomorphic automorphism $x \mapsto \bar{x}$ on $\mathbf{X}$ such that
$
X = \{x \in \mathbf{X} : \bar{x} = x \}
$. We will refer to this automorphism as the \emph{complex conjugation with respect to $X$}.
Conversely, a real form can be defined by an anti-regular involution, as follows:
\begin{prop}[Theorem 6 of {\cite[Chapter 2, §3, 7\textsuperscript{o}]{onishchik_lie_1990a}}]\label{prop:simple_point_real_form}
	Let $\tau$ be an involutive antiholomorphic automorphism\footnote{A more accurate term would be \emph{anti-regular involution}. We keep the term of \cite{onishchik_lie_1990a} in the text, having the same meaning as \emph{anti-regular involution}.}
	of an irreducible complex affine variety $\mathbf{X}$. If the set $X$ of its fixed points contains at least one smooth point, then $X$ is a real form of $\mathbf{X}$.
\end{prop}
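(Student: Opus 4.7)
The strategy is to construct the real form algebraically via Galois descent for $\CC/\RR$, and then invoke the smooth fixed point to guarantee that $X$ is large enough to recover the ring of real functions. First, extend $\tau$ to an $\RR$-linear, $\CC$-antilinear involution $\sigma$ of $A := \CC[\mathbf{X}]$ by $\sigma(f) = \overline{f \circ \tau}$. Setting $B := A^\sigma$, the standard decomposition $f = \tfrac{1}{2}(f + \sigma f) + i \cdot \tfrac{1}{2i}(f - \sigma f)$ yields a canonical isomorphism $B \otimes_\RR \CC \xrightarrow{\sim} A$. Thus $\mathrm{Spec}(B)$ is a real affine variety whose complexification is $\mathbf{X}$, and a direct verification on $\CC$-algebra morphisms shows that a point $x \in \mathbf{X}$ is $\tau$-fixed if and only if the evaluation $\mathrm{ev}_x$ takes real values on $B$; hence the $\RR$-points of $\mathrm{Spec}(B)$ are exactly $X$.

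The remaining task is to identify $B$ with $\RR[X]$, which amounts to showing that $X$ is Zariski dense in $\mathbf{X}$. Here the smoothness hypothesis enters. At the smooth fixed point $x_0$, the differential $d\tau_{x_0}$ is a $\CC$-antilinear involution of $T_{x_0}\mathbf{X}$, whose fixed locus is a real form of the complex tangent space. Using this together with the holomorphic implicit function theorem in a local analytic chart, one produces holomorphic coordinates $(z_1, \dots, z_n)$ centered at $x_0$, with $n = \dim_\CC \mathbf{X}$, in which $\tau$ acts as componentwise complex conjugation; in these coordinates $X$ appears locally as $\{z_j \in \RR\}$, a real-analytic submanifold of real dimension $n$. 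Since any proper Zariski-closed subset of the irreducible variety $\mathbf{X}$ has strictly smaller complex dimension, hence strictly smaller real dimension, it cannot contain $X \cap U$ for any neighborhood $U$ of $x_0$. Therefore $X$ is Zariski dense in $\mathbf{X}$, the restriction map $A \to \mathrm{Maps}(X, \CC)$ is injective, and its restriction to $B$ (whose elements take real values on $X$) identifies $B$ with $\RR[X]$, yielding the desired isomorphism $\RR[X] \otimes_\RR \CC \simeq \CC[\mathbf{X}]$.

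The main obstacle is the Zariski density step, which is precisely where the smooth-fixed-point hypothesis is essential: without it, the fixed locus can be empty (for instance, for the involution $z \mapsto \bar z$ on the complex curve $\{z^2 + 1 = 0\}$), in which case no real form can be recovered. Producing the $\tau$-adapted holomorphic coordinates near $x_0$ is the only analytically delicate point, but it reduces to applying the implicit function theorem after choosing an $\RR$-basis of the $d\tau_{x_0}$-fixed subspace of $T_{x_0}\mathbf{X}$.
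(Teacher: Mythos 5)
The paper itself gives no proof of this proposition --- it is quoted from Onishchik--Vinberg --- so your argument has to stand on its own. Your overall strategy is the standard one and is essentially sound: Galois descent on $A=\CC[\mathbf{X}]$ via the antilinear involution $\sigma(f)=\overline{f\circ\tau}$, the identification $B\otimes_\RR\CC\simeq A$ for $B=A^{\sigma}$, the characterization of $\tau$-fixed points as the points where $B$ takes real values, and the reduction of the whole statement to the Zariski density of $X$ in $\mathbf{X}$. The construction of $\tau$-adapted holomorphic coordinates at the smooth fixed point (so that $X$ is locally $\RR^n\subset\CC^n$ with $n=\dim_\CC\mathbf{X}$) is also correct and is exactly where the hypothesis of a smooth fixed point is used.

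There is, however, a genuine flaw in the one sentence that is supposed to deliver density: you claim that a proper Zariski-closed subset $Z\subsetneq\mathbf{X}$ has ``strictly smaller complex dimension, hence strictly smaller real dimension,'' and therefore cannot contain $X\cap U$. But $\dim_\CC Z\le n-1$ only gives $\dim_\RR Z\le 2n-2$, and $2n-2\ge n$ as soon as $n\ge 2$, so the raw real-dimension count does not exclude $Z\supseteq X\cap U$. What actually rules this out is the complex structure, not the dimension: if $f\in\CC[\mathbf{X}]$ vanishes on $X\cap U$, then in your adapted chart $f$ is a holomorphic function on an open subset of $\CC^n$ vanishing on an open piece of the totally real slice $\RR^n$; all of its partial derivatives at such points are determined by its restriction to $\RR^n$, so $f$ vanishes identically near $x_0$ by the identity principle, and then on all of $\mathbf{X}$ by irreducibility. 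With this substitution the restriction map $A\to\mathrm{Maps}(X,\CC)$ is injective, and the remainder of your identification of $B$ with $\RR[X]$ (after embedding $\mathbf{X}$ into $\CC^n$ by a generating set of $B$, so that $X=\mathbf{X}\cap\RR^n$ is a real algebraic set) goes through as written.
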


This definition extends naturally to complex algebraic groups, so a real algebraic subgroup $G$ is a \emph{real form} of a complex algebraic group $\mathbf{G}$ if the identity embedding $G \subset \mathbf{G}$ extends to an isomorphism $G \otimes_\RR \CC \xrightarrow{\sim} \mathbf{G}$.
In the same way, a real form $G$ of $\mathbf{G}$ defines an involutive antiholomorphic automorphism on $\mathbf{G}$, and such an involution defines a real form if the group $\mathbf{G}$ is irreducible (e.g.\ when it is connected, as the examples that we will consider).
Observe that in all the cases there are several real forms, depending on the involutive antiholomorphic automorphism that is considered. From the viewpoint of complex algebraic varieties and real points, this corresponds to different ways to define a complex variety over $\RR$.
We have, in particular, the definition of twisted forms.

\begin{defn}
	Let $G$ and $G'$ be two real forms of $\mathbf{G}$ with corresponding antiholomorphic involutions $\tau$ and $\tau'$.
	We say that $G$ and $G'$ are \emph{twisted forms} if there is $h \in \mathbf{G}$ such that for all $g \in \mathbf{G}$ we have
	$\tau'(g) = h \, \tau(g) h^{-1}$.
	Note that such an $h$ must satisfy $h \, \tau(h) \in Z(\mathbf{G})$.
\end{defn}

We fix, once and for all, real forms $X$ and $G$ of $\mathbf{X}$ and $\mathbf{G}$ respectively, and we suppose that the corresponding complex conjugations are \emph{compatible}, meaning that for all $g \in \mathbf{G}$ and all $x \in \mathbf{X}$ we have $\conjug{g \cdot x} = \bar{g}\cdot \bar{x}$. Thus, $G$ acts on $X$, and by complexifying this action we recover the action of $\mathbf{G}$ on $\mathbf{X}$.

\subsubsection{Embedding into $\mathbf{G}$-modules and real forms}

 We stated, in \Cref{prop:equivariant_immersion}, that in our setting we can consider $\mathbf{X}$ as a closed $\mathbf{G}$-invariant subset of a finite-dimensional vector space $\mathbf{V}$ with a linear action of $\mathbf{G}$. In this paragraph, we are going to prove that this setting is also valid when considering the real forms $X$ and $G$ of $\mathbf{X}$ and $\mathbf{G}$.
 
 Let $(f_1, \dots , f_d)$ be a finite generating set for the algebra $\RR[X]$. Since $X$ is a real form of $\mathbf{X}$, we know that the extensions of the $f_i$ to $\CC[\mathbf{X}]$, that we still denote by $f_i$, generate the algebra $\CC[\mathbf{X}]$. We are going to embed $\mathbf{X}$ into the dual of a $\mathbf{G}$-invariant subspace of $\CC[X]$.
 Recall that $\mathbf{G}$ acts on $\CC[\mathbf{X}]$ by $g \cdot f : x \mapsto f(g^{-1}x)$. Let $\mathbf{W} = \Span_\CC (\{g\cdot f_i \mid g\in\mathbf{G} , i \in \{1, \dots , d\}\})$; it is a $\mathbf{G}$-invariant subspace of $\CC[\mathbf{X}]$.
 The following lemma follows from the fact that $\CC[\mathbf{X}]$ is a $\mathbf{G}$-module. We refer to Brion's notes \cite{brion_introduction_2010} for details.
 
\begin{lemme}
	The vector space $\mathbf{W} = \Span_\CC (\{g\cdot f_i \mid g \in \mathbf{G} , i \in \{1, \dots , d\}\})$ is finite dimensional.
\end{lemme}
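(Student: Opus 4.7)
The plan is to reduce finite-dimensionality of $\mathbf{W}$ to the corresponding statement for each generator $f_i$ separately, and then invoke the fact that $\CC[\mathbf{X}]$ is a $\mathbf{G}$-module to handle a single $f_i$.

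First, I would decompose $\mathbf{W}$ as a sum. For each $i \in \{1,\dots,d\}$, set $\mathbf{W}_i = \Span_\CC(\{g \cdot f_i \mid g \in \mathbf{G}\})$; this is manifestly the smallest $\mathbf{G}$-invariant subspace of $\CC[\mathbf{X}]$ containing $f_i$. By construction $\mathbf{W} = \mathbf{W}_1 + \cdots + \mathbf{W}_d$, so a finite sum of finite-dimensional subspaces is finite-dimensional, and it is enough to show each $\mathbf{W}_i$ is finite-dimensional.

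The main point is therefore to show that each $f \in \CC[\mathbf{X}]$ lies in a finite-dimensional $\mathbf{G}$-invariant subspace. This is precisely the content of $\CC[\mathbf{X}]$ being a $\mathbf{G}$-module, recalled in the discussion preceding \Cref{prop:equivariant_immersion}. Concretely, one uses the comorphism $\mu^\sharp \colon \CC[\mathbf{X}] \to \CC[\mathbf{G}] \otimes \CC[\mathbf{X}]$ of the action map $\mu \colon \mathbf{G} \times \mathbf{X} \to \mathbf{X}$: for $f \in \CC[\mathbf{X}]$ one may write $\mu^\sharp(f) = \sum_{k=1}^N \varphi_k \otimes h_k$ with $\varphi_k \in \CC[\mathbf{G}]$ and $h_k \in \CC[\mathbf{X}]$, and then for every $g \in \mathbf{G}$ one has $g \cdot f = \sum_{k=1}^N \varphi_k(g^{-1})\, h_k$, so $\Span_\CC(\mathbf{G} \cdot f) \subset \Span_\CC(h_1,\dots,h_N)$ is finite-dimensional.

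Applying this observation to each generator $f_i$ produces a finite-dimensional $\mathbf{W}_i$, and summing over $i$ yields finite-dimensionality of $\mathbf{W}$. There is no real obstacle here beyond citing the $\mathbf{G}$-module structure on $\CC[\mathbf{X}]$; the argument is a direct bookkeeping of the comodule map, and the reference to Brion's notes cited in the excerpt provides the standard justification.
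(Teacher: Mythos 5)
Your argument is correct and is exactly the standard justification that the paper invokes when it says the lemma ``follows from the fact that $\CC[\mathbf{X}]$ is a $\mathbf{G}$-module'' and defers to Brion's notes: the comorphism computation showing $g \cdot f = \sum_k \varphi_k(g^{-1})\, h_k$ is the usual proof that every $\mathbf{G}$-orbit in $\CC[\mathbf{X}]$ spans a finite-dimensional subspace, and summing over the finitely many generators $f_i$ finishes it. No gap; you have simply written out the details the paper leaves to the reference.
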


 In order to define the complex conjugation on $\mathbf{W}$, we will use the following lemma.
\begin{lemme}\label{lemma:W_generated_real_orbits}
	We have $\mathbf{W} = \Span_\CC (\{g\cdot f_i \mid g \in G , i \in \{1, \dots , d\}\})$.
\end{lemme}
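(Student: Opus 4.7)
The inclusion $\Span_\CC (\{g\cdot f_i \mid g \in G , i \in \{1, \dots , d\}\}) \subseteq \mathbf{W}$ is immediate from the definitions, so the content is the reverse inclusion. The plan is to use the Zariski-density of the real form $G$ inside the complex group $\mathbf{G}$, combined with a linear-functional separation argument inside the finite-dimensional space $\mathbf{W}$.

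First I would recall (or quickly re-prove) that $G$ is Zariski-dense in $\mathbf{G}$. This follows directly from the identification $\CC[\mathbf{G}] \simeq \RR[G] \otimes_\RR \CC$ coming from \Cref{def:real_form}: any $f \in \CC[\mathbf{G}]$ decomposes as $f_1 + i f_2$ with $f_j \in \RR[G]$, and $f$ vanishes on $G$ only if both $f_1$ and $f_2$ do, forcing $f = 0$.

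Next, denote by $\mathbf{W}'$ the right-hand side of the claimed equality. Since $\mathbf{W}$ is finite dimensional by the preceding lemma, so is $\mathbf{W}'$. If $\mathbf{W}' \subsetneq \mathbf{W}$, choose a $\CC$-linear functional $\ell \colon \mathbf{W} \to \CC$ vanishing on $\mathbf{W}'$ but not identically zero on $\mathbf{W}$. For each $i \in \{1, \dots , d\}$, the map
\[
\psi_i \colon \mathbf{G} \to \CC, \qquad g \mapsto \ell(g \cdot f_i),
\]
is a regular function on $\mathbf{G}$, because the action map is algebraic and $\ell$ is linear on the finite-dimensional $\mathbf{G}$-stable space $\mathbf{W}$. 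By construction $\psi_i$ vanishes on $G$, hence, by Zariski-density, on all of $\mathbf{G}$. Thus $\ell(g\cdot f_i) = 0$ for every $g \in \mathbf{G}$ and every $i$, which means $\ell$ vanishes on the generating set of $\mathbf{W}$, contradicting the choice of $\ell$.

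I expect no serious obstacle here: the only subtlety is ensuring that we are legitimately working inside the finite-dimensional $\mathbf{W}$ so that the orbit map $g \mapsto g\cdot f_i$ has values in a fixed vector space where a separating linear functional exists; the preceding lemma takes care of that. The Zariski-density of real forms of complex algebraic groups, used in a crucial way, is standard and follows directly from the algebraic definition of a real form adopted in the paper.
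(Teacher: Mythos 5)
Your proof is correct and follows essentially the same route as the paper's: both arguments reduce the reverse inclusion to showing that a linear functional on $\mathbf{W}$ vanishing on all $g\cdot f_i$ with $g\in G$ must vanish identically, by observing that $g\mapsto \ell(g\cdot f_i)$ is a regular function on $\mathbf{G}$ that vanishes on the real form $G$ and hence everywhere. The only difference is that you spell out the Zariski-density and regularity points a bit more explicitly than the paper does.
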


\begin{proof}
	We only need to prove that every linear form on $\mathbf{W}$ that vanishes at the points $g \cdot f_i$ for $g \in G$ and $i \in \{1,\dots,d\}$ is the zero linear form.
	Let $\varphi \in \mathbf{W}^*$ such that for all $g \in \mathbf{G}$ and all $i \in \{1,\dots ,d\}$ $\varphi(g \cdot f_i) = 0$. 
	For $i\in \{1,\dots,d\}$, let $\psi_i \in \CC[\mathbf{G}]$ be defined by $\psi_i(g) = \varphi(g \cdot f_i)$. Since $\psi_i \vert_G = 0$ and $G$ is a real form of $\mathbf{G}$, we obtain that $\psi_i = 0$. Thus, for all $g \in \mathbf{G}$ and all $i \in \{1,\dots,d\}$, $\varphi(g\cdot f_i) =0$. Hence, $\varphi = 0$.
\end{proof}

 Let $W = \Span_\RR (\{ g \cdot f_i \mid g \in G \, , i \in \{1,\dots,d \} \})$. By \Cref{lemma:W_generated_real_orbits}, we know that $W$ is a real form of $\mathbf{W}$. Let $\mathbf{V}$ be the dual of $\mathbf{W}$ and $V$ the dual of $W$, so $V$ is a real form of $\mathbf{V}$. Observe that $\mathbf{G}$ acts on $\mathbf{V}$ linearly. We are going to embed $\mathbf{X}$ into $\mathbf{V}$ in a $\mathbf{G}$-equivariant way that is also compatible with the complex conjugation. A complete statement that completes the proof is given in the following proposition.

\begin{prop}\label{prop:equiv_embedding_and_real_form}
	Let $\mathbf{G}$ be an algebraic complex reductive group acting on a variety $\mathbf{X}$. Let $G$ and $X$ be compatible real forms of $\mathbf{G}$ and $\mathbf{X}$. Then, there exists a $\mathbf{G}$-module $\mathbf{V}$, a real form $V$ of $\mathbf{V}$ and a closed immersion $\mathbf{X} \hookrightarrow \mathbf{V}$ that is equivariant with respect to the action of $\mathbf{G}$ and the complex conjugation.
\end{prop}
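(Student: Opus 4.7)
The plan is to finish the construction already set up in the excerpt: take $\mathbf{V} = \mathbf{W}^{*}$ and $V = W^{*}$ as the announced (real form of a) $\mathbf{G}$-module, and prove that the evaluation map
\[
\mathrm{ev}\colon \mathbf{X} \hookrightarrow \mathbf{V}, \qquad x \longmapsto \bigl( f \mapsto f(x) \bigr)
\]
has the three required properties: it is a closed immersion, it is $\mathbf{G}$-equivariant, and it intertwines the complex conjugations. The first two are classical (and in fact are exactly how \Cref{prop:equivariant_immersion} is proved in Brion's notes); the only genuinely new point is compatibility with the real structures.

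First I would dispose of the two classical properties. Since the $f_i$ lie in $\mathbf{W}$ and generate $\CC[\mathbf{X}]$ as a $\CC$-algebra, the comorphism $\mathrm{Sym}(\mathbf{W}) \to \CC[\mathbf{X}]$ induced by $\mathrm{ev}$ is surjective, which is exactly the statement that $\mathrm{ev}$ is a closed immersion. For equivariance, the action of $\mathbf{G}$ on $\mathbf{V} = \mathbf{W}^{*}$ is the dual of the given action on $\mathbf{W}$, so a direct calculation gives $(g\cdot \mathrm{ev}_x)(f) = \mathrm{ev}_x(g^{-1}\cdot f) = f(gx) = \mathrm{ev}_{gx}(f)$, that is, $g\cdot \mathrm{ev}_x = \mathrm{ev}_{gx}$.

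The core of the proof is the compatibility with conjugations. On $\mathbf{W} \subset \CC[\mathbf{X}]$ I would use the conjugation $f \mapsto \bar f$ defined by $\bar f(x) = \overline{f(\bar x)}$; this is an involutive antiholomorphic map, it commutes with the $G$-action because $G$ and $X$ are compatible (so $\overline{g\cdot f}(x) = \overline{f(g^{-1}\bar x)} = \bar f(g^{-1}x) = (g\cdot \bar f)(x)$ for $g \in G$), and since the generators $f_i \in \RR[X]$ satisfy $\bar{f_i} = f_i$, one gets $\overline{g\cdot f_i} = g\cdot f_i$ for all $g \in G$. This confirms that $W$ is indeed the fixed-point set of this involution, i.e.\ the real form of $\mathbf{W}$ promised by \Cref{lemma:W_generated_real_orbits}. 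On $\mathbf{V} = \mathbf{W}^{*}$ I then use the dual conjugation $\bar\varphi(f) := \overline{\varphi(\bar f)}$, whose fixed points are precisely $V = W^{*}$.

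Finally, the compatibility of $\mathrm{ev}$ with conjugations becomes a one-line check:
\[
\overline{\mathrm{ev}_x}(f) = \overline{\mathrm{ev}_x(\bar f)} = \overline{\bar f(x)} = \overline{\,\overline{f(\bar x)}\,} = f(\bar x) = \mathrm{ev}_{\bar x}(f),
\]
so $\overline{\mathrm{ev}_x} = \mathrm{ev}_{\bar x}$, as required. I do not expect a real obstacle here; the only subtle point is the bookkeeping of the conjugation on the dual, and once the formula $\bar\varphi(f) = \overline{\varphi(\bar f)}$ is fixed everything falls into place.
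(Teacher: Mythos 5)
Your proposal is correct and follows essentially the same route as the paper: the same evaluation embedding $x \mapsto \ev_x$ into $\mathbf{W}^*$, the same closed-immersion and $\mathbf{G}$-equivariance arguments, and the same compatibility check with the real structures. The only difference is presentational — you phrase the conjugation on $\mathbf{W}$ and its dual intrinsically via $\bar f(x) = \overline{f(\bar x)}$ and $\bar\varphi(f) = \overline{\varphi(\bar f)}$, whereas the paper carries out the same verification in coordinates using a real basis of $W$ and its dual basis.
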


\begin{proof}
	Let $\mathbf{W}$, $W$, $\mathbf{V}$ and $V$ be defined as above. Let
	\[
	 \iota :
	 \begin{aligned}
	 \mathbf{X} &\to \mathbf{V} \\
	 x & \mapsto \ev_x
	 \end{aligned}
	\]
	 where $\ev_x : f \mapsto f(x)$ is the evaluation map. Since $\mathbf{V}$ generates $\CC[\mathbf{X}]$ as a $\CC$-algebra, $\iota$ is a a closed immersion.
	 Now, let $x \in \mathbf{X}$, $g \in \mathbf{G}$ and $f \in \mathbf{W}$. We have
	 \begin{align*}
	 	(g \cdot \iota(x))(f) 
	 	&= (g \cdot \ev_x)(f)
	 	= \ev_x(g^{-1}\cdot f) \\
	 	&= (g^{-1}\cdot f)(x) 
	 	= f(gx) 
	 	= \ev_{gx}(f) \\
	 	&= \iota(gx)(f).
	 \end{align*}
	 Thus, $\iota$ is $\mathbf{G}$-equivariant.
	 
	 It only remains to prove that $\iota$ is equivariant with respect to the complex conjugation.
	 Let $(f_1, \cdots , f_m)$ be an $\RR$-basis for $W$, and $(f_1^*,\dots , f_m^*)$ be its dual basis in $V$. Since $W$ and $V$ are real forms of $\mathbf{W}$ and $\mathbf{V}$, the two families are also bases for the corresponding complex vector spaces.
	 Let $x \in \mathbf{X}$, so $\iota(x) = \sum_{i=1}^{m}\lambda_i f_i^* \in \mathbf{V}$. Let $f = \sum_{i=1}^{m}\mu_i f_i \in \mathbf{W}$.
	 We have
	 \begin{align*}
	 	\iota(\bar{x})(f)
	 	&= \ev_{\bar{x}}(f)
	 	= f(\bar{x})
	 	= \sum_{i=1}^{m}\mu_i f_i(\bar{x}) \\
	 	&= \sum_{i=1}^{m}\mu_i \conjug{f_i(x)} 
	 	= \sum_{i=1}^{m}\mu_i \conjug{\ev_x(f_i)}
	 	= \sum_{i=1}^{m}\mu_i \bar{\lambda_i} \\
	 	&= \sum_{i=1}^{m}\bar{\lambda_i}f_i^*(f) 
	 	= \conjug{\left(\sum_{i=1}^{m}\lambda_i f_i^*\right)}(f)
	 	= \conjug{\ev_x}(f) \\
	 	&=\conjug{\iota(x)}(f)
	 \end{align*}
	 
	 Hence, $\iota(\bar{x}) = \conjug{\iota(x)}$, so $\iota$ is equivariant with respect to the complex conjugation.
\end{proof}

\subsubsection{The variety $\mathbf{X \sslash G}$ is defined over $\RR$}

We are interested in the link between the quotients of $X$ by $G$ and the one of $\mathbf{X}$ by $\mathbf{G}$. First, we observe that the GIT quotient $\mathbf{X \sslash G}$ is defined over $\RR$, so it makes sense to consider its real points. We first need a technical lemma. The proof is inspired by the proof of Proposition 6.7 of \cite{richardson_minimum_1990}.

\begin{lemme}\label{lemma:RXG_finitely_generated}
	Let $X$ be a real form of $\mathbf{X}$. Then, the ring of invariant functions $\RR[X]^G$ is finitely generated.
\end{lemme}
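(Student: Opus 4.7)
The plan is to reduce the finite generation of $\RR[X]^G$ to the classical Hilbert--Nagata finite generation of $\CC[\mathbf{X}]^{\mathbf{G}}$, which holds because $\mathbf{G}$ is complex reductive. The strategy proceeds in four steps.

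First, I would observe that, since $G$ is a real form of $\mathbf{G}$, $G$ is Zariski-dense in $\mathbf{G}$, and therefore a function $f \in \CC[\mathbf{X}]$ is $G$-invariant if and only if it is $\mathbf{G}$-invariant: for fixed $x \in \mathbf{X}$ the regular function $g \mapsto f(g \cdot x) - f(x)$ on $\mathbf{G}$ vanishes on the Zariski-dense subset $G$, hence vanishes identically. Consequently,
\[
	\CC[\mathbf{X}]^G = \CC[\mathbf{X}]^{\mathbf{G}}.
\]

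Second, I would use the real-form hypothesis $\CC[\mathbf{X}] \simeq \RR[X] \otimes_\RR \CC$ to decompose any $f \in \CC[\mathbf{X}]$ uniquely as $f = f_1 + i f_2$ with $f_1, f_2 \in \RR[X]$. By compatibility of the conjugations, $G$ acts on $\mathbf{X}$ by maps defined over $\RR$, so the $G$-action on $\CC[\mathbf{X}]$ is the $\CC$-linear extension of its action on $\RR[X]$, and in particular preserves the above decomposition. Thus $f \in \CC[\mathbf{X}]^G$ iff $f_1, f_2 \in \RR[X]^G$, giving
\[
	\CC[\mathbf{X}]^{\mathbf{G}} = \RR[X]^G \otimes_\RR \CC.
\]

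Third, I invoke Hilbert--Nagata: since $\mathbf{G}$ is reductive, $\CC[\mathbf{X}]^{\mathbf{G}}$ is a finitely generated $\CC$-algebra. Pick any finite $\CC$-algebra generating set; splitting each generator into real and imaginary parts yields a finite family $f_1, \dots, f_n \in \RR[X]^G$ that still generates $\CC[\mathbf{X}]^{\mathbf{G}}$ over $\CC$. Finally, to conclude that these $f_i$ already generate $\RR[X]^G$ as an $\RR$-algebra, let $g \in \RR[X]^G$. Viewing $g$ inside $\CC[\mathbf{X}]^{\mathbf{G}}$, write $g = P(f_1, \dots, f_n)$ with $P \in \CC[Y_1, \dots, Y_n]$, and split $P = P_1 + i P_2$ with $P_1, P_2 \in \RR[Y_1, \dots, Y_n]$. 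Then $g = P_1(f_1, \dots, f_n) + i\, P_2(f_1, \dots, f_n)$, where both summands lie in $\RR[X]$; since $g$ is real, $P_2(f_1, \dots, f_n) = 0$ and $g = P_1(f_1, \dots, f_n)$, as required.

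The only delicate point is the identification $\CC[\mathbf{X}]^G = \RR[X]^G \otimes_\RR \CC$: it rests on the compatibility assumption $\overline{g \cdot x} = \bar g \cdot \bar x$, which is precisely what guarantees that the complex conjugation on $\CC[\mathbf{X}]$ commutes with the $G$-action and hence that real and imaginary parts of an invariant remain invariant. Once this is granted, the rest is a standard descent from $\CC$ to $\RR$.
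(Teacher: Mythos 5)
Your proof is correct and follows essentially the same route as the paper's: Zariski density of $G$ in $\mathbf{G}$ to get $\CC[\mathbf{X}]^{\mathbf{G}} = \RR[X]^G \otimes_\RR \CC$, Nagata's theorem for finite generation over $\CC$, and splitting the generators into real and imaginary parts. The only cosmetic difference is in the last step, where you verify directly that an arbitrary $g \in \RR[X]^G$ is a real polynomial in the split generators, while the paper argues via the subring $R$ they generate satisfying $R \otimes_\RR \CC = \RR[X]^G \otimes_\RR \CC$.
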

\begin{proof}
	Since $G$ is Zariski-dense in $\mathbf{G}$ and $X$ is a real form of $\mathbf{X}$, we have
	\[
	\CC[\mathbf{X}]^{\mathbf{G}}
	=
	\CC[\mathbf{X}]^G
	=
	(\RR[X] \otimes_\RR \CC)^G
	=
	\RR[X]^G \otimes_\RR \CC .
	\]
	
	By Nagata's theorem (see \cite{nagata_invariants_1963} or \cite[Theorem 1.24]{brion_introduction_2010}),
	we know that $\CC[\mathbf{X}]^{\mathbf{G}}$ is finitely generated. Let $(f_1, \cdots , f_d)$ be a finite set of generators. Since $\CC[\mathbf{X}]^{\mathbf{G}} = \RR[X]^G \otimes_\RR \CC$, for $j \in \{1, \dots , d \}$ there exist $g_j,h_j \in \RR[X]^G$ such that $f_j \vert_X = g_j + i h_j$. Let $R \subset \RR[X]$ be the ring generated by $\{ g_1, \dots , g_d , h_1,\dots , h_d \}$. Since $f_j \vert_X = g_j + i h_j$, we have that $f_1, \cdots , f_d \in R \otimes_\RR \CC$. Thus, $R \otimes_\RR \CC = \CC[\mathbf{X}]^{\mathbf{G}}$ and $R = \RR[X]^{G}$ is generated by $\{ g_1, \dots , g_d , h_1,\dots , h_d \}$.
\end{proof}

\begin{prop}\label{prop:complex_conjug_compatible_GIT}
	The complex conjugations on $\mathbf{G}$ and $\mathbf{X}$ induce a compatible complex conjugation in $\mathbf{X \sslash G}$.
\end{prop}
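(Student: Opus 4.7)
The plan is to work at the level of coordinate rings, using the identification $\CC[\mathbf{X}\sslash\mathbf{G}] = \CC[\mathbf{X}]^{\mathbf{G}}$. The complex conjugation $\sigma_{\mathbf{X}}$ on $\mathbf{X}$, viewed as an anti-regular involution, pulls back to an anti-$\CC$-linear involution $\sigma^*$ of $\CC[\mathbf{X}]$ defined by $\sigma^*(f)(x) = \conjug{f(\conjug{x})}$, whose fixed ring is $\RR[X]$. My first goal will be to check that $\sigma^*$ stabilises the subring $\CC[\mathbf{X}]^{\mathbf{G}}$.

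This is the only step that uses the compatibility hypothesis $\conjug{g \cdot x} = \bar{g}\cdot\bar{x}$ between the conjugations on $\mathbf{G}$ and $\mathbf{X}$. Given $f \in \CC[\mathbf{X}]^{\mathbf{G}}$, $g \in \mathbf{G}$ and $x \in \mathbf{X}$, one computes
\[
(\sigma^*(f))(gx) = \conjug{f(\conjug{gx})} = \conjug{f(\bar{g}\bar{x})} = \conjug{f(\bar{x})} = (\sigma^*(f))(x),
\]
where the central equality uses the $\mathbf{G}$-invariance of $f$. Thus $\sigma^*(\CC[\mathbf{X}]^{\mathbf{G}}) \subseteq \CC[\mathbf{X}]^{\mathbf{G}}$, and $\sigma^*$ restricts to an anti-$\CC$-linear involution of $\CC[\mathbf{X}]^{\mathbf{G}}$ whose fixed subring is exactly $\RR[X]^G$.

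Next, I would invoke \Cref{lemma:RXG_finitely_generated} to know that $\RR[X]^G$ is finitely generated and that $\CC[\mathbf{X}]^{\mathbf{G}} = \RR[X]^G \otimes_\RR \CC$. Dualising the restricted involution produces an anti-regular involution $\tau$ of the affine variety $\mathbf{X}\sslash\mathbf{G}$; by \Cref{def:real_form} this exhibits $\mathbf{X}\sslash\mathbf{G}$ as defined over $\RR$, with a real form whose coordinate ring is $\RR[X]^G$. This $\tau$ is the candidate induced complex conjugation.

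Finally, compatibility with the quotient map $\pi \colon \mathbf{X} \to \mathbf{X}\sslash\mathbf{G}$ is automatic from the construction: since $\tau$ was defined precisely as the involution corresponding to the restriction of $\sigma^*$ to $\CC[\mathbf{X}]^{\mathbf{G}}$, the algebra inclusion $\CC[\mathbf{X}]^{\mathbf{G}} \hookrightarrow \CC[\mathbf{X}]$ intertwines the two involutions, and dually $\pi \circ \sigma_{\mathbf{X}} = \tau \circ \pi$. The only genuinely substantive step is the stability of $\CC[\mathbf{X}]^{\mathbf{G}}$ under $\sigma^*$, and even there the computation is a routine unwinding of the compatibility hypothesis, so I do not anticipate a real obstacle.
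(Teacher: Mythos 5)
Your proof is correct and follows essentially the same route as the paper: both rest on \Cref{lemma:RXG_finitely_generated} and the decomposition $\CC[\mathbf{X}]^{\mathbf{G}} = \RR[X]^G \otimes_\RR \CC$, the only difference being that the paper realizes the conjugation concretely by embedding $\mathbf{X\sslash G}$ into $\CC^d$ via real generators and restricting coordinatewise conjugation, whereas you phrase the same content ring-theoretically by checking that the anti-linear involution $\sigma^*$ preserves $\CC[\mathbf{X}]^{\mathbf{G}}$ and then dualizing. This is a purely presentational difference; both arguments are complete.
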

\begin{proof}
	By \Cref{lemma:RXG_finitely_generated}, we can choose a finite generating set $(f_1, \dots , f_d)$ for $\RR[X]^G$.
	Since $\CC[\mathbf{X}]^{\mathbf{G}} = \RR[X]^G \otimes_\RR \CC$, the extensions of the $f_i$ to $\CC[\mathbf{X}]$ generate $\CC[\mathbf{X}]^{\mathbf{G}}$.
	Therefore, we can identify $\mathbf{X \sslash G}$ with the image of the map
	\[
	\begin{aligned}
	\mathbf{X} &\to \CC^d \\
	x &\mapsto (f_1(x), \dots , f_d(x))
	\end{aligned}
	\]
	Hence, the complex conjugation of $\CC^d$ defines an antiholomorphic involutive automorphism $\tau$ of $\mathbf{X}$ such that
	\[
	\forall x \in \mathbf{X} \quad
	\pi(\bar{x}) = \tau(\pi(x))
	\]
	Observe that this last condition shows that the definition of $\tau$ on $\mathbf{X \sslash G}$ is independent of the choice of $f_1, \dots , f_d$.
\end{proof}

\begin{rem}
	This complex conjugation allows to define $\mathbf{X \sslash G}$ over $\RR$, and to consider its real points $(\mathbf{X \sslash G})(\RR)$ as the fixed points of $\tau$. However, even if $\mathbf{X}$ is irreducible, $\mathbf{X \sslash G}$ is not necessarily irreducible and there might not be a real smooth point. Hence, the statement above does not mean that $(\mathbf{X \sslash G})(\RR)$ is a real form of $(\mathbf{X \sslash G})$.
\end{rem}

\subsection{A real GIT quotient}

 We consider now the action of $G$ on $X$, instead of the one of $\mathbf{G}$ on $\mathbf{X}$. In this setting, the ring of invariant functions $\RR[X]^G$ does no longer define a real algebraic quotient, but we can still consider the space $X \sslash G$ of closed $G$-orbits.
 
 We summarize here the results obtained by Richardson and Slodowy in \cite{richardson_minimum_1990} about the space $X \sslash G$ and a natural map $\beta : X \sslash G \to \mathbf{X \sslash G}$.
 The reader might be aware that the results of \cite{richardson_minimum_1990} are stated for actions of $G$ on a vector space $V$. However, by \Cref{prop:equiv_embedding_and_real_form}, we can embed $\mathbf{X}$ as a closed $\mathbf{G}$-invariant subset of a complex vector space $\mathbf{V}$ endowed with a linear action of $\mathbf{G}$, in such a way that the actions of $\mathbf{G}$ and the complex conjugation are equivariant. Hence, $X$ will be embedded as a closed $G$-invariant subset of $V$. Since the results are about closed orbits, they generalize without any trouble to the action of $G$ on a real form $X$ of an algebraic variety $\mathbf{X}$. The situation is summarized in the diagram of \Cref{fig:diagram_quotients_complete}.

 \begin{figure}[htbp]
 	\[
 	\begin{tikzcd}[row sep=1.5em, column sep = 1.5em]
 	X \arrow[rr, hook] \arrow[dr, hook, "\iota" description] \arrow[dd, two heads, "p" description] 
 	&&
 	\mathbf{X} \arrow[dd, two heads, "\pi" description] \arrow[dr,hook,"\iota" description] \\
 	& V \arrow[rr, hook, crossing over]
 	&&
 	\mathbf{V} \arrow[dd, two heads,"\pi" description] \\
 	X \sslash G \arrow[rr, "\qquad  \beta" description] \arrow[dr, hook] 
 	&& \mathbf{X \sslash G} \arrow[dr, hook] \\
 	& V \sslash G \arrow[rr, "\beta" description] \arrow[from=uu, crossing over, two heads, "p" description] 
 	&& \mathbf{V \sslash G}
 	\end{tikzcd}
 	\]
 	\caption{Maps between $\mathbf{X}$, $\mathbf{V}$, their real forms and their quotients.} \label{fig:diagram_quotients_complete}
 \end{figure}

 Let $X \sslash G$ be the space of closed $G$-orbits of $X$, endowed with the quotient topology.
 Luna, in \cite{luna_certaines_1975},
 and 
 Richardson and Slodowy, in \cite{richardson_minimum_1990},
 prove that $X \sslash G$ is the Hausdorffization of the topological quotient $X / G$, so it is the quotient of $X$ by $G$ in the category of Hausdorff spaces. 
 The techniques used by Richardson and Slodowy involve a detailed study of the set of minimum vectors in $V$
 for a norm invariant by the maximal compact subgroup of $G$.
 With these tools, they are able to prove, in \cite[Theorem 7.7]{richardson_minimum_1990}, that $X \sslash G$ is homeomorphic to a semi-algebraic set in some $\RR^d$.

 Let $p : X \to X \sslash G$ be the projection map (noted $\pi$ in \cite{richardson_minimum_1990}). It is a continuous map, that is the quotient map in the category of Hausdorff spaces.
 Keeping this time the notation of \cite[Section 7]{richardson_minimum_1990}, since $\pi \vert_{X} : X \to \mathbf{X \sslash G}$ is continuous, $G$-invariant, and has as target a Hausdorff space, there is a natural continuous map $\beta : X \sslash G \to \mathbf{X \sslash G}$ such that $\beta \circ p = \pi \vert_{X}$.
 Richardson and Slodowy prove that the map $\beta$ is proper (Proposition 7.4) and has finite fibers (Lemma 8.2).
 Moreover, if $E$ is a closed $G$-invariant subset of $X$, then $p(E)$ is closed in $X \sslash G$, and hence $\pi(E)$ is closed in $\mathbf{X\sslash G}$.
 The same closure properties hold for arbitrary local fields of characteristic zero, as proven by Bremigan in \cite{bremigan_quotients_1994}, and the restrictions of $\beta$ to the strata defined in the same article are covering maps.

 We are interested in the relations between the quotients $X \sslash G$ and $\mathbf{X \sslash G}$. 
 By \Cref{prop:complex_conjug_compatible_GIT}, we know that the complex conjugations of $\mathbf{X}$ and $\mathbf{G}$ induced by $X$ and $G$ are compatible with a complex conjugation of $\mathbf{X \sslash G}$, so $\pi(X) = \beta(X \sslash G)$ is a subset of $(\mathbf{X \sslash G})(\RR)$.
 In addition, observe that $X$ is a real algebraic set, so, by the Tarski-Seidenberg theorem, its image by a real algebraic map is a semi-algebraic set. In particular, $\pi(X)$ is a semi-algebraic subset of $(\mathbf{X \sslash G})(\RR)$. 
 A summary of the setting, without the ambient vector spaces, is given in \Cref{fig:diagram_quotients_X}.
 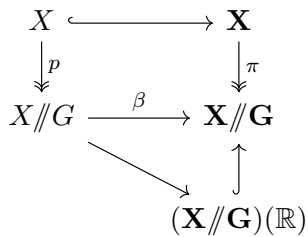
\begin{figure}[htbp]
 	\[ 
 	\begin{tikzcd}
 	X \arrow[r, hook] \arrow[d, two heads,"p"] 
 	&
 	\mathbf{X} \arrow[d, two heads, "\pi"] \\
 	X \sslash G \arrow[r, "\beta"] \arrow[dr]
 	&
 	\mathbf{X \sslash G} \\
 	&
 	(\mathbf{X \sslash G})(\RR) \arrow[u, hook]
 	\end{tikzcd}
 	\]
 	\caption{Maps between $X$, $\mathbf{X}$ and their quotients.}
 	\label{fig:diagram_quotients_X}
 \end{figure}

\section{Lifting real points} \label{section:lifting_real_points}
 In this section, we are interested in the relations between the real GIT quotient $X \sslash G$ and the real points of the complex GIT quotient $\mathbf{X \sslash G}$.
 First, we recall briefly the setting given in \Cref{sect:background_and_setting}. Let 
 $\mathbf{G}$ be a connected complex reductive algebraic group
 acting on a complex variety $\mathbf{X}$.
 Let $G$ and $X$ be real forms of $\mathbf{G}$ and $\mathbf{X}$ respectively, that are compatible with the $\mathbf{G}$-action.
 Let $\mathbf{X \sslash G}$ be the GIT quotient of $\mathbf{X}$ by $\mathbf{G}$: it is an algebraic set defined over $\RR$ whose ring of invariants is $\CC[\mathbf{X}]^{\mathbf{G}}$.
 Let $X \sslash G$ be the real GIT quotient of $X$ by $G$, that is a topological Hausdorff space.
 
 We have also the projection maps $\pi: \mathbf{X} \to \mathbf{X \sslash G}$ and $p : X \to X \sslash G$, as well as the natural map $\beta : X \sslash G \to \mathbf{X \sslash G}$, as in \Cref{fig:diagram_quotients_X}.
 As we will see later in the example of \Cref{section:example}, the map $\beta$ may not be surjective nor injective. However, under some extra hypotheses, we can still have some information on the surjectivity and injectivity of the map.
 We begin by considering in \Cref{subsect:compact_real_form} the case when $G$ is the compact real form of $\mathbf{G}$, and where $\beta$ is a homeomorphism onto its image. Then, in \Cref{subsect:lifting_irreducible_points}, we prove that we can lift neat points of $(\mathbf{X \sslash G})(\RR)$ to quotients of the form $X' \sslash G'$, but maybe after changing the real forms $X$ and $G$. 
 Finally, in \Cref{subsect:number_of_lifts}, we construct a bijection between the lifts of a point of $(\mathbf{X \sslash G})(\RR)$ to $X \sslash G$ and a subset of some cohomology set.

\subsection{The compact real form}\label{subsect:compact_real_form}
 We suppose here that $G = K$ is the compact real form of $\mathbf{G}$.
 Then, all the $G$-orbits of $X$ are compact, and the quotients $X/G$ and $X \sslash G$ are the same.
 We are going to prove that, in this case, the quotient $X \sslash G$ is homeomorphic to its image by the map $\beta : X \sslash G \to \mathbf{X \sslash G}$. We will use the following lemma, proven by Birkes in \cite{birkes_orbits_1971} for a general real form $G$. In the original paper it is stated for the ambient vector spaces $\mathbf{V}$ and a real form $V$, but the result generalizes immediately to our setting.

\begin{lemme}[Corollary 5.3 of \cite{birkes_orbits_1971}]\label{lemma:corollary_birkes_closed_orbits}
	Let $x \in X$ be such that $G \cdot x$ is closed in $X$. Then, $\mathbf{G} \cdot x$ is closed in $\mathbf{X}$.
\end{lemme}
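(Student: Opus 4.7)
The plan is to prove the contrapositive: if $\mathbf{G}\cdot x$ fails to be closed in $\mathbf{X}$, then so does $G\cdot x$ in $X$. The first move would be to invoke \Cref{prop:equiv_embedding_and_real_form} and reduce to a linear action of $\mathbf{G}$ on a $\mathbf{G}$-module $\mathbf{V}$ with compatible real form $V\supset X$, since closedness of an orbit in $\mathbf{X}$ (resp.\ $X$) is equivalent to closedness in the ambient $\mathbf{V}$ (resp.\ $V$), using that $\mathbf{X}$ (resp.\ $X$) is closed in $\mathbf{V}$ (resp.\ $V$).

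Assuming in this linear setting that $\mathbf{G}\cdot x$ is not closed in $\mathbf{V}$, the next step would be to apply the complex Hilbert--Mumford criterion, which produces a one-parameter subgroup $\lambda\colon\CC^{*}\to\mathbf{G}$ with $y_{0}:=\lim_{t\to 0}\lambda(t)\cdot x$ lying in $\overline{\mathbf{G}\cdot x}\setminus\mathbf{G}\cdot x$. The contradiction I would be aiming for would follow if I could upgrade $\lambda$ to a real destabilizing 1-PSG $\mu\colon\RR^{*}\to G$: the limit $z=\lim_{t\to 0^{+}}\mu(t)\cdot x$ would then lie in $X$ (by closedness of $X$ in $V$) and in $\overline{G\cdot x}\setminus G\cdot x$, contradicting the hypothesis that $G\cdot x$ is closed in $X$.

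The main obstacle is precisely this ``upgrade'' from a complex to a real destabilizer, which is the essential content of Birkes' paper. The strategy would be to use that $x$ is fixed by the complex conjugation $\sigma$, so the conjugate 1-PSG $\bar{\lambda}:=\sigma\circ\lambda\circ\sigma$ also destabilizes $x$; Kempf's theorem on optimal 1-parameter subgroups then says that the maximally destabilizing 1-PSG through $x$ is unique up to conjugation by an associated parabolic subgroup $P(\lambda)\subset\mathbf{G}$, which forces its $P(\lambda)$-conjugacy class to be $\sigma$-stable. A Galois-averaging argument inside $P(\lambda)$ then yields a $\sigma$-fixed representative, factoring through $G$. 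An alternative, more analytic route closer in spirit to Richardson--Slodowy would be to argue directly with minimum vectors for a $K$-invariant norm on $\mathbf{V}$: a real critical point on the $G$-orbit would also be a complex minimum vector, from which closedness of $\mathbf{G}\cdot x$ would follow.
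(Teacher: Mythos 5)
Your reduction to the linear case via \Cref{prop:equiv_embedding_and_real_form} is exactly what the paper does: the author gives no proof of this lemma at all, but cites Corollary 5.3 of Birkes and remarks that the vector-space statement transfers to $\mathbf{X}\subset\mathbf{V}$ because the embedding is closed and equivariant, so closedness of an orbit in $\mathbf{X}$ (resp.\ $X$) is the same as in $\mathbf{V}$ (resp.\ $V$). Where you go further is in sketching a proof of the cited result itself, and there your route is genuinely different from Birkes' original one: Birkes establishes the rationality of a destabilizing one-parameter subgroup over a perfect field by a direct induction through tori and $\mathrm{GL}_n$, whereas you invoke Kempf's later theory of optimal destabilizing one-parameter subgroups. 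That route does work --- Kempf's rationality theorem over a perfect field says precisely that a rational point with non-closed orbit admits an optimal destabilizer defined over the base field --- but be aware that the step you describe as ``a Galois-averaging argument inside $P(\lambda)$'' is the entire content of that theorem rather than an elementary averaging: one needs a Galois-invariant norm on one-parameter subgroups, the $\sigma$-stability of the optimal class and hence of the parabolic $P(\lambda)$, and then a maximal $\RR$-split torus in an $\RR$-Levi of $P(\lambda)$ to extract a $\sigma$-fixed representative. In effect your argument trades the citation of Birkes for a citation of Kempf, which is legitimate but not more self-contained. The alternative you mention in passing --- minimal vectors for a norm invariant under a compatible maximal compact subgroup, with the real minimal vectors being exactly the complex ones lying in $V$ --- is the mechanism developed by Richardson and Slodowy and is the cleanest fit with the toolkit this paper already uses; if you wanted to replace the citation by an actual proof, that is the route to write out in detail.
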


\begin{prop}\label{prop:compact_real_form}
	If $G=K$ is the compact form of $\mathbf{G}$, then the natural map $X/K \to \mathbf{X \sslash G}$ is a homeomorphism onto its image.
\end{prop}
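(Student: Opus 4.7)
The plan is to reduce the statement to the injectivity of $\beta$ and then deduce the homeomorphism claim from formal properties. Since $K$ is compact, every $K$-orbit in $X$ is compact, hence closed, so $X \sslash K$ coincides with the topological quotient $X/K$, which is Hausdorff. The general results of Richardson--Slodowy recalled above already tell us that $\beta \colon X \sslash K \to \mathbf{X \sslash G}$ is continuous and proper. A continuous, injective, proper map into a Hausdorff space is a closed embedding, hence a homeomorphism onto its image; so the only real content is to prove injectivity.

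For injectivity, I would argue as follows. Let $x,y \in X$ with $\pi(x) = \pi(y)$. Since $K\cdot x$ is compact and therefore closed in $X$, \Cref{lemma:corollary_birkes_closed_orbits} gives that $\mathbf{G}\cdot x$ is closed in $\mathbf{X}$, and similarly for $y$. As $\pi$ separates closed $\mathbf{G}$-orbits, $\mathbf{G}\cdot x = \mathbf{G}\cdot y$, so there exists $g \in \mathbf{G}$ with $y = g\cdot x$. Let $\tau$ and $\sigma$ denote the antiholomorphic involutions on $\mathbf{G}$ and $\mathbf{X}$ defining $K$ and $X$, and write the Cartan decomposition $g = k p$ with $k \in K$ and $p \in \exp(i\mathfrak{k})$, where $\mathfrak{k} = \mathrm{Lie}(K)$. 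From $\tau(k) = k$ and $\tau(p) = p^{-1}$, together with the compatibility $\sigma(g\cdot v) = \tau(g)\cdot \sigma(v)$ applied to $y = g\cdot x \in X$, one obtains $y = \tau(g) \cdot x$, and hence $p^{2} \in \mathbf{G}_x$.

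The crux is to upgrade $p^{2} \in \mathbf{G}_x$ to $p \in \mathbf{G}_x$. The stabilizer $\mathbf{G}_x$ is $\tau$-stable because $\sigma(x) = x$, and since $\mathbf{G}\cdot x$ is closed, Matsushima's theorem ensures $\mathbf{G}_x$ is reductive. Thus $\mathbf{G}_x$ admits a Cartan decomposition $\mathbf{G}_x = K_x \exp(i\mathfrak{k}_x)$ compatible with the ambient one, with $\mathfrak{k}_x = \mathrm{Lie}(\mathbf{G}_x) \cap \mathfrak{k}$. By uniqueness of the Cartan decomposition in $\mathbf{G}$, the element $p^{2} \in \mathbf{G}_x \cap \exp(i\mathfrak{k})$ must already lie in $\exp(i\mathfrak{k}_x)$; since the exponential is a diffeomorphism on the symmetric part and admits unique square roots there, $p \in \exp(i\mathfrak{k}_x) \subset \mathbf{G}_x$. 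Hence $p \cdot x = x$ and $y = k \cdot x \in K \cdot x$, proving injectivity.

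The main obstacle is precisely this last step. Without Matsushima's reductivity of $\mathbf{G}_x$ and the compatibility of Cartan decompositions between $\mathbf{G}$ and its stabilizer, the conclusion $p^{2} \in \mathbf{G}_x$ would not force $p \in \mathbf{G}_x$, and one could in principle have $y \ne k\cdot x$ for all $k \in K$. An alternative route would be to combine \Cref{prop:equiv_embedding_and_real_form} with a Kempf--Ness argument applied to a $K$-invariant Hermitian norm which is the complexification of a $K$-invariant real inner product on $V$, showing that every point of $X$ is a minimum vector in its $\mathbf{G}$-orbit; injectivity would then follow from the Kempf--Ness uniqueness of $K$-orbits of minimum vectors in a closed $\mathbf{G}$-orbit.
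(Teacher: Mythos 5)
Your proposal is correct, and the reduction to injectivity (compactness of $K$-orbits, Hausdorffness of $X/K$, properness of $\beta$ from Richardson--Slodowy, Birkes' lemma to pass to closed $\mathbf{G}$-orbits) is exactly the paper's. Where you diverge is the injectivity step itself: the paper simply invokes \cite[Proposition 8.3.1]{richardson_minimum_1990}, which states that $\mathbf{G}x \cap X = Kx$, and is done in one line; you instead reprove that fact from scratch via the Cartan decomposition $g = kp$, the identity $p^2 \in \mathbf{G}_x$ coming from $\sigma(y)=y$, Matsushima's criterion (closed orbit $\Rightarrow$ reductive stabilizer), and Mostow's compatibility of the Cartan decomposition of a $\tau$-stable reductive subgroup with that of $\mathbf{G}$, which lets you extract the square root $p \in \exp(i\mathfrak{k}_x) \subset \mathbf{G}_x$. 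Your argument is sound -- the $\tau$-stability of $\mathbf{G}_x$, the uniqueness of the polar part, and the fact that $\mathfrak{g}_x \cap i\mathfrak{k}$ is a linear subspace closed under halving all check out -- but it front-loads two substantial theorems (Matsushima, Mostow) that the paper's citation encapsulates; indeed Richardson--Slodowy's own proof of 8.3.1 runs along essentially these lines, so you have in effect unfolded their proof rather than found a shortcut. Your alternative Kempf--Ness route (every point of $X$ is a minimum vector for a conjugation-compatible $K$-invariant norm, and minimum vectors in a closed $\mathbf{G}$-orbit form a single $K$-orbit) is also valid and is closer in spirit to the minimum-vector machinery the paper attributes to Richardson--Slodowy, as well as to the polar-decomposition argument of Florentino--Lawton mentioned in the remark following the proposition.
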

\begin{proof}
	Since $K$ is compact, all the $K$-orbits of $X$ are compact, and so the topological quotient $X/K$ is Hausdorff. Hence, the projection $X/K \to X \sslash K$ is a homeomorphism.
	By an abuse of language, we will identify these two spaces, and let $\beta : X/K \to \mathbf{X \sslash G}$ be the natural map, which is a continuous map between locally compact Hausdorff spaces.  
	
	By \cite[Proposition 7.4]{richardson_minimum_1990}, we know that $\beta$ is proper. Thus, in order to prove the statement it only remains to prove that $\beta$ is injective.
	
	Let $x_1,x_2 \in X$ such that $\beta (K x_1) = \beta (K x_2)$.
	Since $K x_1$ and $K x_2$ are compact, they are closed in $X$, and hence, by \Cref{lemma:corollary_birkes_closed_orbits}, $\mathbf{G} x_1$ and $\mathbf{G} x_2$ are closed in $\mathbf{X}$.
	Thus, for $i\in \{1,2\}$, $\beta(K x_i)$ is the closed orbit $\mathbf{G} x_i$ and hence $x_2 \in \mathbf{G}x_1$.
	By \cite[Proposition 8.3.1]{richardson_minimum_1990}, $\mathbf{G}x_1 \cap X = Kx_1$, so $x_2 \in Kx_1$ and $Kx_1 = K x_2$, which proves the injectivity of $\beta$.
	
\end{proof}

\begin{rem}
	The statement of \Cref{prop:compact_real_form} was already proven by Florentino and Lawton in \cite[Proposition 4.5]{florentino_character_2013} in the setting of quiver representations, that contains the case of character varieties that we discuss later. The main tool in their work for proving injectivity is the polar decomposition.
\end{rem}

\subsection{Lifting real neat points}\label{subsect:lifting_irreducible_points}

We know that the image of the map $\beta : X \sslash G \to \mathbf{X \sslash G}$ is contained in the set of real points of $\mathbf{X \sslash G}$.
Conversely, we would like to lift points of $(\mathbf{X \sslash G})(\RR)$ to points of $X \sslash G$. 
We are going to prove that we can do it under two conditions. First, we need to assume that the point is neat and, second, we need to allow to recover different real forms $X'$ of $\mathbf{X}$ and $G'$ of $\mathbf{G}$.
We also make the additional assumption that $Z(\mathbf{G})$ acts trivially on $\mathbf{X}$.

\begin{defn}\label{def:irreducible_point_X}
	Consider an action of $\mathbf{G}$ on $\mathbf{X}$. We say that a point $x \in \mathbf{X}$ is \emph{neat} if the three following conditions hold.
	\begin{enumerate}
		\item The point $x \in \mathbf{X}$ is regular.
		\item The orbit $\mathbf{G} \cdot x$ is closed in $\mathbf{X}$. \label{cond_def:irred_closed_orbit}
		\item The stabilizer of $x$ in $\mathbf{G}$ is $Z(\mathbf{G})$.\label{cond_def:irred_central_stabilizer}
		
	\end{enumerate}
\end{defn}

\begin{rem}
	If the center of $\mathbf{G}$ is finite, then a neat point is also a stable point, in the sense of \cite[Definition 1.25]{brion_introduction_2010}.
\end{rem}

\begin{defn}\label{def:irred_point_quotient}
	Let $x \in \mathbf{X \sslash G}$. We say that $x$ is neat if there is a neat point $x \in \mathbf{X}$ such that $x = \pi(x)$.
\end{defn}

The following proposition states that we can lift real neat points of $\mathbf{X \sslash G}$ to real forms of the component of $x$, stable by an action of a real form of $\mathbf{G}$.
A similar proof of this statement can be found in \cite[Page 68]{johnson_deformation_1987} in the particular case of a character variety.

\begin{thm}
	 \label{thm:lift_real_irred}
	Let $x \in \mathbf{X}$ be a neat point such that $\pi(x) \in (\mathbf{X \sslash G})(\RR)$.
	Let $\mathbf{Y} \subset \mathbf{X}$ be the irreducible component of $x$.
	Then, there exist real forms $G'$ of $\mathbf{G}$ and $Y'$ of $\mathbf{Y}$ such that
	$G'$ is a twisted form of $G$,
	$x \in Y'$ and $Y'$ is $G'$-stable.
\end{thm}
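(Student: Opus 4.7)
The plan is to build a twisted antiholomorphic involution $\tau'$ on $\mathbf{G}$ and a compatible antiholomorphic involution $\sigma$ on $\mathbf{X}$ both coming from a single element $h \in \mathbf{G}$ that moves $x$ to $\bar x$, and then to invoke \Cref{prop:simple_point_real_form} on the irreducible component $\mathbf{Y}$ to obtain $Y'$. The input we exploit is the three-fold neatness of $x$: closedness of the orbit gives the existence of $h$, the central stabilizer condition forces a cocycle identity on $h$, and regularity (smoothness) lets $\sigma$ descend to $\mathbf{Y}$ with a smooth fixed point.

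\textbf{Step 1: producing $h$.} Since $\pi(x) \in (\mathbf{X\sslash G})(\RR)$ and $\pi$ is equivariant with respect to the complex conjugations (\Cref{prop:complex_conjug_compatible_GIT}), one has $\pi(\bar x) = \pi(x)$. Because $\mathbf{G}\cdot x$ is closed in $\mathbf{X}$, the fiber $\pi^{-1}(\pi(x))$ equals $\mathbf{G}\cdot x$, so there exists $h \in \mathbf{G}$ with $\bar x = h\cdot x$. Applying complex conjugation to this equality gives $x = \bar h \cdot \bar x = (\bar h h)\cdot x$, so $\bar h h$ lies in the stabilizer of $x$, which by neatness is $Z(\mathbf{G})$. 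In particular $\bar h h = h \bar h$.

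\textbf{Step 2: building $\tau'$ and $\sigma$.} Define $\tau'(g) := h^{-1}\,\bar g\, h$ for $g \in \mathbf{G}$ and $\sigma(y) := h^{-1}\cdot \bar y$ for $y \in \mathbf{X}$. Both are antiholomorphic. A direct computation gives $(\tau')^2(g) = (\bar h h)^{-1}\, g\, (\bar h h) = g$ since $\bar h h \in Z(\mathbf{G})$; similarly $\sigma^2(y) = (\bar h h)^{-1}\cdot y = y$ using the assumption that $Z(\mathbf{G})$ acts trivially on $\mathbf{X}$. The identity $\sigma(g\cdot y) = \tau'(g)\cdot \sigma(y)$ is an immediate unwinding, and $\sigma(x) = h^{-1}\cdot(h\cdot x) = x$. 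Setting $k := h^{-1}$, the relation $\tau'(g) = k\,\tau(g)\,k^{-1}$ together with $k\,\tau(k) = (\bar h h)^{-1} \in Z(\mathbf{G})$ shows that the real form $G' := \mathbf{G}^{\tau'}$ is a twisted form of $G$ in the sense of the paper.

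\textbf{Step 3: descent to $\mathbf{Y}$ and conclusion.} Because $x$ is regular (smooth) in $\mathbf{X}$, it lies on a unique irreducible component, namely $\mathbf{Y}$. The antiholomorphic involution $\sigma$ permutes the irreducible components of $\mathbf{X}$ and fixes $x$, so $\sigma(\mathbf{Y}) = \mathbf{Y}$. Smoothness of $x$ in $\mathbf{X}$ implies smoothness of $x$ in $\mathbf{Y}$, and $x$ is a fixed point of $\sigma|_{\mathbf{Y}}$; by \Cref{prop:simple_point_real_form} the fixed point set $Y'$ of $\sigma|_{\mathbf{Y}}$ is a real form of $\mathbf{Y}$ containing $x$. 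Finally, for $g' \in G'$ and $y' \in Y'$ one has $\sigma(g'\cdot y') = \tau'(g')\cdot\sigma(y') = g'\cdot y'$, so $Y'$ is $G'$-stable.

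\textbf{Main obstacle.} The delicate point is the bookkeeping that tightly couples $\tau'$ and $\sigma$: the centrality of $\bar h h$ must simultaneously make $\tau'$ involutive on $\mathbf{G}$, make $\sigma$ involutive on $\mathbf{X}$ (using the trivial $Z(\mathbf{G})$-action hypothesis), and identify $G'$ as a twisted form of $G$. Once the right normalization $\sigma(y) = h^{-1}\bar y$ (rather than $h \bar y$) is chosen so that $\sigma(x) = x$, all three checks fall out of the single identity $\bar h h \in Z(\mathbf{G})$, and the geometric step reduces to a clean application of \Cref{prop:simple_point_real_form}.
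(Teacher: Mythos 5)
Your proof is correct and follows essentially the same route as the paper's: produce $h$ from the coincidence of the closed orbits of $x$ and $\bar x$, use neatness to get the cocycle condition $\bar h h\in Z(\mathbf{G})$, define the twisted conjugations $\tau'$ and $\sigma$ (your $h^{-1}$ is the paper's $h$), and conclude via \Cref{prop:simple_point_real_form} applied to the irreducible component. One small imprecision: the fiber $\pi^{-1}(\pi(x))$ is \emph{not} equal to $\mathbf{G}\cdot x$ in general even when that orbit is closed (it contains every orbit whose closure meets $\mathbf{G}\cdot x$); the correct justification is that $\mathbf{G}\cdot\bar x$ is also closed and each fiber of $\pi$ contains a unique closed orbit, whence $\mathbf{G}\cdot\bar x=\mathbf{G}\cdot x$ -- this yields the same $h$ and the rest of your argument is unaffected.
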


\begin{proof}
	Since $\pi(x) = \conjug{\pi(x)}$, we have
	$
	\pi(\bar{x}) = \conjug{\pi(x)} = \pi(x)$.
	Thus, there exists $h \in \mathbf{G}$ such that $x = h \cdot \bar{x}$. We have
	\begin{align*}
	\bar{x} &= \bar{h} \cdot x \\
	h \cdot \bar{x} &= h\bar{h} \cdot x \\
	x &= h\bar{h} \cdot x
	\end{align*}
	
	Since $x$ is neat, $h \bar{h} \in Z(\mathbf{G})$.
	Now, define
	\begin{align*}
	\theta \colon &
	\begin{aligned}
	\mathbf{G} &\to \mathbf{G} \\
	g &\to h \bar{g} h^{-1} 
	\end{aligned}
	&
	\sigma \colon &
	\begin{aligned}
	\mathbf{X} &\to \mathbf{X} \\
	u &\to h \cdot \bar{u}
	\end{aligned}
	\end{align*}
	
	We claim that $\theta$ and $\sigma$ are compatible involutive antiholomorphic automorphisms. Indeed, since $h\bar{h} \in Z(\mathbf{G})$ and $Z(\mathbf{G})$ acts trivially on $\mathbf{X}$, we have, for $g \in \mathbf{G}$ and $u \in \mathbf{X}$
	\begin{align*}
	\theta \circ \theta (g) &= \theta(h \bar{g} h^{-1}) 
	&
	\sigma \circ \sigma (u) &= \sigma(h \cdot \bar{u}) \\
	&= (h\bar{h}) g (h\bar{h})^{-1} 
	&
	&= h \bar{h} \cdot u \\
	&= g 
	&
	&= u
	\end{align*}
	Furthermore, $\theta(g) \cdot \sigma(u) = h\bar{g}h^{-1} \cdot (h \cdot \bar{u}) = h \cdot \conjug{(g\cdot u)} = \sigma(g\cdot u)$.
	
	Let $G' \subset \mathbf{G}$ and $X' \subset \mathbf{X}$ be the fixed points of $\theta$ and $\sigma$ respectively, and let $Y' = \mathbf{Y} \cap X'$.
	Since $x \in \mathbf{Y}$ is fixed by $\sigma$, we have $x \in Y'$.
	Moreover, since $\theta$ and $\sigma$ are compatible, $X'$ and $Y'$ are $G'$-stables.
	By the definition of $G'$, we know that it is a twisted\ form of $G$.
	It only remains to check that $Y'$ is a real form of $\mathbf{Y}$, which follows from \Cref{prop:simple_point_real_form}, since $\mathbf{Y}$ is irreducible and $x \in Y'$ is a regular point.
\end{proof}

\begin{rem}
	On the one hand, if $\mathbf{X} = \mathbf{V}$ is a vector space, then it is irreducible, so $\mathbf{Y} = \mathbf{V}$ and $Y'$ is a real form of $\mathbf{V}$.
	On the other hand, if $\mathbf{X}$ is $\mathbf{G}$-equivariantly embedded in a vector space $\mathbf{V}$, \Cref{thm:lift_real_irred} gives real forms $V'$ of $\mathbf{V}$ and $Y'$ of $\mathbf{Y}$ that are stable by $G'$.
	Thus, we can consider the set $X'$ of fixed points of $\sigma$ in $\mathbf{X}$, that is $G'$-invariant and satisfies $Y' \subset X' \subset V'$. However, $X'$ might not be a real form of $\mathbf{X}$ in the sense of \Cref{def:real_form}.
\end{rem}

\begin{rem}
	If we consider the rings of functions instead of the points of the varieties, then the induced action of $\sigma$ on $\CC[\mathbf{X}]$ is given by $f \mapsto \sigma(f)$, where
	$\sigma(f) : x \mapsto \conjug{f(\sigma(x))}$.
	Let $\CC[\mathbf{X}]^{\sigma}$ be the set of fixed points of $\sigma$ for this action. Then, $\CC[\mathbf{X}]^{\sigma}$ is an $\RR$-algebra such that $\CC[\mathbf{X}]^{\sigma} \otimes_\RR \CC = \CC[\mathbf{X}]$.
\end{rem}

\subsection{The number of lifts}\label{subsect:number_of_lifts}

By the work of Richardson and Slodowy in \cite{richardson_minimum_1990}, we know that the projection $\beta : X \sslash G \to \mathbf{X \sslash G}$ has finite fibers. In this section, we will relate the fiber of a point with some cohomological invariant of a stabilizer.
The identification is mainly a consequence of Corollary 1 of \cite[Ch.\ I §5.4 prop.\ 36]{serre_cohomologie_1994d}, but we keep the full construction for completeness.

\subsubsection{The first non-Abelian cohomology set}
We keep our setting, with $G$ a real form of $\mathbf{G}$. The complex conjugation induced by $G$ is precisely an action of the Galois group $\mathrm{Gal}(\CC/\RR)$ on $\mathbf{G}$.
A \emph{cocycle} for this action is a function $f: \mathrm{Gal}(\CC/\RR) \to \mathbf{G}$ such that for all 
$\sigma, \tau \in \mathrm{Gal}(\CC/\RR)$ we have 
$f(\sigma \tau) = f(\sigma) \: \sigma(f(\tau))$.
Observe that this is equivalent to the choice of the image $g \in \mathbf{G}$ of the non-trivial element of $\mathrm{Gal}(\CC/\RR)$, that must satisfy $g\bar{g}= \mathrm{id}$.
A \emph{coboundary} for the action is a map of the form $\sigma \mapsto h^{-1}\sigma(h)$ for some $h \in \mathbf{G}$; this corresponds to the elements $g \in \mathbf{G}$ that can be written as $h^{-1} \bar{h}$.
We can define the first cohomology set (that may not have a group structure if $\mathbf{G}$ is not Abelian) by mimicking the idea of a quotient of cocycles by coboundaries. We follow Section 5.1 of Chapter I of Serre's book \cite{serre_cohomologie_1994d} for the following definition.

\begin{defn}
	Let $\tau$ be an involutive antiholomorphic automorphism of $\mathbf{G}$ defining a real form $G$.
	The first cohomology set of $\mathrm{Gal}(\CC/\RR)$ is the set 
	\[
	H^1_\tau(\mathrm{Gal}(\CC/\RR),\mathbf{G}) =
	\{g \in \mathbf{G} \mid g \, \tau(g) = \mathrm{id} \} / \sim \] where
	$g_1 \sim g_2$ if there exists $h\in \mathbf{G}$ such that $g_1 = h^{-1}g_2 \, \tau(h)$.
\end{defn}

\begin{rem}
	This definition depends on the real form $G$ and the involution $\tau$. For example, on the one hand, the generalization of Hilbert's Theorem 90 given in \cite[X.1, corollay of Prop.3]{serre_cohomologie_1994d} states that $H^1(\mathrm{Gal}(\CC/\RR) , \mathrm{SL}_n(\CC) )$ is trivial for the usual complex conjugation of the real form $\mathrm{SL}_n(\RR)$. On the other hand, if $\tau : M \mapsto {}^t\!\bar{M}^{-1}$ is the involution defining $\mathrm{SU}(n)$, one can show that there is a one-to-one correspondence between $H^1_\tau(\mathrm{Gal}(\CC/\RR),\mathrm{SL}_n(\CC))$ and the set of signatures of type $(p,2q)$ of non-degenerate Hermitian forms, and hence contains $\lfloor \frac{n}{2} \rfloor +1$ elements.
	
	In the rest of the section, we will write $\bar{g}$ for $\tau(g)$ and not write the subindex $\tau$ in the $H^1$ to avoid heavy notation. However, the reader should keep in mind that the objects depend on the real forms and complex conjugations chosen.
\end{rem}

\subsubsection{A map from orbits to cohomology sets}
Let $x \in X$ be such that $Gx$ is closed in $X$ (and hence, by \Cref{lemma:corollary_birkes_closed_orbits}, $\mathbf{G}x$ is also closed in $\mathbf{X}$). Let $\mathbf{G}_x$ be the stabilizer of $x$ in $\mathbf{G}$. Observe that the cardinal of the fiber $\beta^{-1}(\beta(p(x))) \subset X \sslash G$ is the number of $G$-orbits for the action of $G$ on $\mathbf{G}x \cap X$.
Now, we construct a map from this set of orbits to $H^1(\mathrm{Gal}(\CC/\RR), \mathbf{G}_x)$.
First, define the map 
\[
\tilde{\varphi} \colon
\begin{aligned}
\mathbf{G}x \cap X &\to H^1(\mathrm{Gal}(\CC/\RR), \mathbf{G}_x) \\
g \cdot x &\mapsto [g^{-1}\bar{g}]
\end{aligned}
\]

\begin{lemme}
	The map $\tilde{\varphi}$ is well defined.
\end{lemme}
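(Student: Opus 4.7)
The plan is to verify the two conditions that ensure $\tilde\varphi$ is well defined: first, that for any representative $g \in \mathbf{G}$ with $g\cdot x \in \mathbf{G}x \cap X$ the element $g^{-1}\bar g$ is a cocycle taking values in the stabilizer $\mathbf{G}_x$; and second, that its class in $H^1(\galCR,\mathbf{G}_x)$ does not depend on the choice of $g$.

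For the first point, I would begin by observing that $\mathbf{G}_x$ is stable under the Galois action on $\mathbf{G}$, so that the coefficient set $H^1(\galCR,\mathbf{G}_x)$ genuinely makes sense. Indeed, since $x \in X$ one has $\bar x = x$, and compatibility of the complex conjugations on $\mathbf{G}$ and $\mathbf{X}$ gives $\overline{h\cdot x} = \bar h \cdot \bar x = \bar h \cdot x$, so $h \in \mathbf{G}_x$ implies $\bar h \in \mathbf{G}_x$. Writing $y = g\cdot x$ with $y \in X$, the relation $\bar y = y$ becomes $\bar g \cdot x = g \cdot x$, which is exactly $g^{-1}\bar g \in \mathbf{G}_x$. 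The cocycle identity then reduces to the direct computation
\[
(g^{-1}\bar g)\,\overline{(g^{-1}\bar g)} = (g^{-1}\bar g)(\bar g^{-1} g) = \mathrm{id}.
\]

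For the second point, suppose $g_1\cdot x = g_2\cdot x$. Then $h := g_2^{-1}g_1 \in \mathbf{G}_x$, and a direct substitution gives
\[
g_1^{-1}\bar g_1 = h^{-1}\,(g_2^{-1}\bar g_2)\,\bar h,
\]
which is precisely the coboundary relation inside $\mathbf{G}_x$ that defines the equivalence on cocycles. Hence $[g_1^{-1}\bar g_1] = [g_2^{-1}\bar g_2]$ in $H^1(\galCR,\mathbf{G}_x)$, and $\tilde\varphi(y)$ is independent of the chosen representative.

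The argument is entirely formal, so no genuine obstacle arises; the only subtlety worth isolating is the Galois-stability of the stabilizer $\mathbf{G}_x$, which is what makes $H^1$ with coefficients in $\mathbf{G}_x$ meaningful and allows the coboundary identity above to live in the correct set.
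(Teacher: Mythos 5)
Your proof is correct and follows essentially the same route as the paper: the same computation showing $g^{-1}\bar g\in\mathbf{G}_x$ from $\overline{g\cdot x}=g\cdot x$, and the same coboundary identity $g_1^{-1}\bar g_1 = h^{-1}(g_2^{-1}\bar g_2)\bar h$ with $h=g_2^{-1}g_1\in\mathbf{G}_x$ for independence of the representative. Your explicit verification that $\mathbf{G}_x$ is stable under the conjugation (so that $H^1(\galCR,\mathbf{G}_x)$ is meaningful) and that $g^{-1}\bar g$ satisfies the cocycle condition are details the paper leaves implicit, and are welcome.
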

\begin{proof}
	Let $g \in \mathbf{G}$ such that $g \cdot x \in X$. 
	Hence, $\bar{g}\cdot x = \bar{g}\cdot \bar{x} =\conjug{g \cdot x} = g \cdot x$, so $g^{-1}\bar{g} \in \mathbf{G}_x$. Moreover, $g^{-1}\bar{g}$ is a cocycle, so the class $[g^{-1}\bar{g}]$ is a well defined element in $H^1(\mathrm{Gal}(\CC/\RR), \mathbf{G}_x)$.
	It remains to check that if $h \in \mathbf{G}$ is such that $h \cdot x = g \cdot x$, then $[h^{-1}\bar{h}] = [g^{-1}\bar{g}]$. If $h$ is such an element, then $s = g^{-1}h \in \mathbf{G}_x$, and $h^{-1}\bar{h} = s^{-1}g^{-1}\bar{g}\bar{s}$, so $[h^{-1}\bar{h}] = [g^{-1}\bar{g}]$ and $\tilde{\varphi}$ is well defined.
	
\end{proof}
\begin{lemme}
	The map $\tilde{\varphi}$ is constant on $G$-orbits.
\end{lemme}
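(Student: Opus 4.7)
The plan is to unwind the definitions and exploit the fact that elements of $G$ are precisely the $\tau$-fixed elements of $\mathbf{G}$. Take $y_1, y_2 \in \mathbf{G}x \cap X$ lying in the same $G$-orbit, so there exists $k \in G$ with $y_2 = k \cdot y_1$. Write $y_1 = g_1 \cdot x$ with $g_1 \in \mathbf{G}$; then $y_2 = (k g_1)\cdot x$, so $k g_1$ is a legitimate choice of representative for $y_2$. By the previous lemma $\tilde{\varphi}$ does not depend on the chosen representative of $y_2$, so we may compute $\tilde{\varphi}(y_2)$ using $g_2 := k g_1$.

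Next, I would compute the cocycle directly. We have
\[
g_2^{-1}\,\overline{g_2} \;=\; g_1^{-1}k^{-1}\,\overline{k g_1}
\;=\; g_1^{-1}\, k^{-1}\, \bar{k}\, \overline{g_1}.
\]
The key step is that $k \in G$, which by definition means $\bar{k} = k$, so $k^{-1}\bar{k} = e$ and the expression collapses to $g_1^{-1}\overline{g_1}$. Hence $g_2^{-1}\overline{g_2} = g_1^{-1}\overline{g_1}$ as cocycles, so a fortiori their classes in $H^1(\galCR, \mathbf{G}_x)$ coincide, giving $\tilde{\varphi}(y_1) = \tilde{\varphi}(y_2)$.

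There is no real obstacle here: the whole argument is a two-line verification once one observes the two facts that (i) by the previous lemma the map is independent of the choice of $g$ representing a point of the orbit $\mathbf{G}x$, and (ii) elements of the real form $G$ are $\tau$-invariant. The content of the lemma is really just recording the compatibility between the cocycle construction and the $G$-action, and it is this compatibility that will allow the map $\tilde{\varphi}$ to descend to a well-defined map $\varphi \colon (\mathbf{G}x \cap X)/G \to H^1(\galCR, \mathbf{G}_x)$, paving the way for the bijection claimed in \Cref{prop:intro_bound_number_of_lifts}.
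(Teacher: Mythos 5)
Your proof is correct and is essentially identical to the paper's: both compute $(kg_1)^{-1}\overline{kg_1} = g_1^{-1}k^{-1}\bar{k}\,\overline{g_1}$ and use $\bar{k}=k$ for $k \in G$ to collapse it to $g_1^{-1}\overline{g_1}$, relying on the previous lemma for independence of the representative. Nothing to add.
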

\begin{proof}
	Let $g \in \mathbf{G}$ such that $g \cdot x \in X$, and let $h \in G$. Then
	$(hg)^{-1} \conjug{hg} = g^{-1}h^{-1}\bar{h}\bar{g} = g^{-1}\bar{g}$, so $\tilde{\varphi}(h\cdot (g\cdot x)) = \tilde{\varphi}(g \cdot x)$.
\end{proof}

Thus, the map $\widetilde{\varphi}$ factors through the space of orbits to a map 
\[\varphi : (\mathbf{G}x \cap X)/G \to H^1(\mathrm{Gal}(\CC/\RR), \mathbf{G}_x).\]

\begin{prop}\label{prop:phi_injective}
	The map $\varphi$ is injective.
\end{prop}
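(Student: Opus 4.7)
The plan is to unpack the equality in $H^1(\galCR, \mathbf{G}_x)$ and use it to produce an element of $G$ moving one orbit representative to the other, which is precisely what injectivity on the level of $G$-orbits requires.

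Concretely, suppose $g_1 \cdot x, \, g_2 \cdot x \in \mathbf{G}x \cap X$ satisfy $\varphi(g_1\cdot x) = \varphi(g_2 \cdot x)$. The first step would be to unfold this equality to produce $s \in \mathbf{G}_x$ with $g_1^{-1}\bar{g_1} = s^{-1}\, g_2^{-1}\bar{g_2}\, \bar s$, using the defining equivalence relation on cocycles from the earlier definition of $H^1_\tau$. The natural candidate to connect the two orbit representatives is then $h := g_2 s g_1^{-1} \in \mathbf{G}$; the next step is to rearrange the cocycle identity to check that $h$ is Galois-invariant, i.e.\ $\bar h = h$, so that in fact $h \in G$. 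Finally, a one-line computation using that $s$ fixes $x$ gives $h \cdot (g_1 \cdot x) = g_2 s \cdot x = g_2 \cdot x$, which places the two representatives in the same $G$-orbit.

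I do not anticipate any real obstacle here: the argument is essentially a formal inversion of the construction of $\widetilde\varphi$, and the cocycle identity is exactly what one needs to exhibit the required real element $h$. The only care required lies in the bookkeeping with the coboundary convention fixed earlier (and in the consistent use of $\tau(g) = \bar g$), together with the observation that the trivialising element $s$ must lie in the stabiliser $\mathbf{G}_x$, so that the term $s\cdot x$ collapses to $x$ at the very end. In that sense the proof is dual to (and forced by) the definition of $\widetilde\varphi$, and should be no more than a few lines.
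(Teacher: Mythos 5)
Your argument is correct and is essentially the paper's own proof: the paper normalizes the second representative by an element of the stabilizer so that the two cocycles become equal on the nose, and then observes that $hg^{-1}$ is real, whereas you carry the trivialising element $s\in\mathbf{G}_x$ explicitly through the computation of $h=g_2sg_1^{-1}$; the two are the same manipulation. No gaps.
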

\begin{proof}
	Let $g,h \in \mathbf{G}$ such that $g \cdot x \in X$, $h \cdot x \in X$ and $\tilde{\varphi}(g \cdot x) = \tilde{\varphi}(h \cdot x)$.
	Hence, $[h^{-1}\bar{h}] = [g^{-1}\bar{g}]$. So, maybe after multiplying $h$ by an element of $\mathbf{G}_x$, we can suppose that $h^{-1}\bar{h} = g^{-1}\bar{g}$.
	Thus, $\bar{h}\bar{g}^{-1} = hg^{-1} \in G$, and therefore $g \cdot x$ and $h \cdot x = (hg^{-1})\cdot (g \cdot x)$ are in the same $G$-orbit.
\end{proof}

Since the stabilizer of a neat point is the center of $\mathbf{G}$, we obtain the following corollary.

\begin{cor}\label{cor:beta_one_to_one_if_h1_trivial}
	Suppose that $H^1(\mathrm{Gal}(\CC/\RR),Z(\mathbf{G}))$ is trivial. Then the restriction of the map $\beta : X\sslash G \to \mathbf{X \sslash G}$ to the neat points is injective.
\end{cor}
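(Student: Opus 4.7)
The plan is to reduce the injectivity statement to the previous Proposition by identifying fibers of $\beta$ with the orbit sets $(\mathbf{G}x\cap X)/G$.

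First, I would fix $y \in (\mathbf{X \sslash G})(\RR)$ neat and assume $\beta^{-1}(y)$ is non-empty, picking some $x \in X$ with $p(x) \in \beta^{-1}(y)$. Since $y$ is neat, there is some lift $\tilde{x} \in \mathbf{X}$ with closed $\mathbf{G}$-orbit and $\mathbf{G}_{\tilde{x}} = Z(\mathbf{G})$; because the closed orbit in $\pi^{-1}(y)$ is unique and $\mathbf{G}\cdot x$ is closed (by \Cref{lemma:corollary_birkes_closed_orbits}, since $G\cdot x$ is closed in $X$), we have $\mathbf{G}\cdot x = \mathbf{G}\cdot \tilde{x}$. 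Hence $\mathbf{G}_x$ is conjugate to $Z(\mathbf{G})$, and since $Z(\mathbf{G})$ is normal, $\mathbf{G}_x = Z(\mathbf{G})$.

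Next, I would invoke the observation made just before \Cref{prop:phi_injective}: the fiber $\beta^{-1}(y) = \beta^{-1}(\beta(p(x)))$ is in bijection with the set of $G$-orbits of $\mathbf{G}x \cap X$, i.e.\ with $(\mathbf{G}x \cap X)/G$. Then by \Cref{prop:phi_injective}, the map
\[
\varphi \colon (\mathbf{G}x \cap X)/G \to H^1(\galCR,\mathbf{G}_x) = H^1(\galCR,Z(\mathbf{G}))
\]
is injective. If the target is trivial, this set has at most one element, so $\beta^{-1}(y)$ contains at most one point, proving injectivity of $\beta$ restricted to neat points.

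There is no substantial obstacle; the only point requiring a small argument is the identification $\mathbf{G}_x = Z(\mathbf{G})$ (rather than merely conjugate to it), which is immediate from normality of the center. Everything else is just packaging \Cref{prop:phi_injective} with the definition of neatness.
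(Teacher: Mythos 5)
Your proposal is correct and follows exactly the paper's route: the paper derives this corollary in one line from \Cref{prop:phi_injective} together with the observation that the stabilizer of a neat point is $Z(\mathbf{G})$, which is precisely what you do (you merely spell out the small details about the closed orbit being unique and the conjugate of the center being the center). No differences worth noting.
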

\begin{rem}
	The condition of \Cref{cor:beta_one_to_one_if_h1_trivial} holds for the groups $\mathrm{GL}_n(\CC)$, $\mathrm{SL}_{2m+1}(\CC)$ and $\mathrm{SO}_{2m+1}(\CC)$ for the usual complex conjugation. It also holds for $\mathrm{SL}_{2m+1}(\CC)$ for the twisted real forms of $\mathrm{SU}(2m+1)$.
\end{rem}

	Observe that there is a natural map of pointed sets from $H^1(\galCR ,\mathbf{G}_{x})$ to $H^1(\galCR ,\mathbf{G})$. Indeed, the condition of being a cocycle does not depend on the ambient group, and if $g \sim g'$ in $\mathbf{G}_{x}$, then $g \sim g'$ also in $\mathbf{G}$.
	The map $\varphi$ induces the one-to-one correspondence described in the following proposition.
\begin{prop}\label{prop:bound_number_of_lifts}
	Let $x \in (\mathbf{X \sslash G})(\RR)$, $\tilde{x} \in \mathbf{X}$ be a lift of $x$ and $\mathbf{G}_{\tilde{x}}$ the stabilizer of $\tilde{x}$ in $\mathbf{G}$. Then, 
	there is a one-to-one correspondence between $\beta^{-1}(x)$ and the kernel of the natural map
	\[ 
	H^1(\galCR ,\mathbf{G}_{\tilde{x}} ) \to H^1(\galCR ,\mathbf{G} ).
	\]
\end{prop}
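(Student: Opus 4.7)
The plan is to identify the fiber $\beta^{-1}(x)$ with the set of $G$-orbits on the real points of the homogeneous space $\mathbf{G}\tilde{x} \simeq \mathbf{G}/\mathbf{G}_{\tilde{x}}$, and then to show that the injective map $\varphi$ constructed just before is a bijection onto the required kernel. The first step is to reduce to a convenient choice of $\tilde{x}$: since every lift of $x$ in $\mathbf{X}$ has the same unique closed $\mathbf{G}$-orbit in its closure, I may take $\mathbf{G}\tilde{x}$ closed, and then $\beta^{-1}(x)$ consists exactly of the closed $G$-orbits $Gy \subset X$ with $y \in \mathbf{G}\tilde{x}$, so $\beta^{-1}(x) = (\mathbf{G}\tilde{x} \cap X)/G$. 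If this intersection is non-empty I further replace $\tilde{x}$ by a point of $\mathbf{G}\tilde{x} \cap X$, so that $\tilde{x} \in X$ and the complex conjugation on $\mathbf{G}$ restricts to an involution on $\mathbf{G}_{\tilde{x}}$; this puts me in the setting of Proposition \ref{prop:phi_injective}, which supplies an injective map $\varphi \colon (\mathbf{G}\tilde{x} \cap X)/G \to H^1(\galCR, \mathbf{G}_{\tilde{x}})$.

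It remains to show that the image of $\varphi$ is exactly the kernel of the natural map to $H^1(\galCR, \mathbf{G})$. That the image lies in the kernel is immediate, since $\tilde{\varphi}(g \cdot \tilde{x}) = [g^{-1}\bar{g}]$ is manifestly the coboundary of $g$ already in $\mathbf{G}$. For the reverse inclusion, given $[a] \in H^1(\galCR, \mathbf{G}_{\tilde{x}})$ trivial in $H^1(\galCR, \mathbf{G})$, I would write $a = h^{-1}\bar{h}$ for some $h \in \mathbf{G}$ and compute
\[
\conjug{h \cdot \tilde{x}} = \bar{h}\cdot \tilde{x} = h \cdot (h^{-1}\bar{h})\cdot \tilde{x} = h a \cdot \tilde{x} = h \cdot \tilde{x},
\]
using $\tilde{x} \in X$ in the first equality and $a \in \mathbf{G}_{\tilde{x}}$ in the last; this shows $h \cdot \tilde{x} \in X$ and, by construction, $\tilde{\varphi}(h \cdot \tilde{x}) = [a]$, producing the desired preimage. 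Combined with the injectivity from Proposition \ref{prop:phi_injective}, this yields the bijection, and the contrapositive of the surjectivity argument also handles the residual case $\mathbf{G}\tilde{x} \cap X = \emptyset$, forcing the kernel to be empty as well.

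The main mathematical content here is really Corollary 1 of \cite[Ch.\ I §5.4 Prop.\ 36]{serre_cohomologie_1994d}, which computes the $G$-orbits on the real points of a homogeneous space via Galois cohomology of the stabilizer; the genuine obstacle I expect is only in the first reduction step, since the Galois action on $\mathbf{G}_{\tilde{x}}$ needs a real representative of $\tilde{x}$ in order to be the restriction of the action on $\mathbf{G}$. Once the representative is chosen in $\mathbf{G}\tilde{x} \cap X$, the remaining verifications reduce to the short algebraic manipulations in the cocycle set displayed above.
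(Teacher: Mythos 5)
Your proposal follows essentially the same route as the paper: both identify $\beta^{-1}(x)$ with the set of $G$-orbits on $\mathbf{G}\tilde{x} \cap X$, invoke the injectivity of $\varphi$ from \Cref{prop:phi_injective}, and prove that the image of $\varphi$ is exactly the kernel via the identical cocycle computation (writing the class as $a = h^{-1}\bar{h}$ and checking $\conjug{h\cdot\tilde{x}} = \bar{h}\cdot\tilde{x} = ha\cdot\tilde{x} = h\cdot\tilde{x}$). The only difference is that you make explicit the need to choose $\tilde{x}$ in $X$ with closed $\mathbf{G}$-orbit so that the Galois action on $\mathbf{G}_{\tilde{x}}$ is actually defined --- a hypothesis the paper's proof uses silently when it writes $\conjug{g\tilde{x}} = \bar{g}\tilde{x}$.
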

\begin{proof}
	By \Cref{prop:phi_injective}, we know that there is a one-to-one correspondence between $\beta^{-1}(x)$ and the image of $\varphi$ in $H^1(\galCR , \mathbf{G}_{\tilde{x}})$. Thus, we only need to prove that the image of $\varphi$ is precisely 
	$\ker(H^1(\galCR ,\mathbf{G}_{\tilde{x}} ) \to H^1(\galCR ,\mathbf{G}))$.\\
	On the one hand, an element of the image of $\varphi$ is of the form $[g^{-1}\bar{g}]$ for some $g \in \mathbf{G}$, so its image in $H^1(\galCR ,\mathbf{G})$ is trivial. \\
	On the other hand, let $[h] \in \ker(H^1(\galCR ,\mathbf{G}_{\tilde{x}} ) \to H^1(\galCR ,\mathbf{G}))$. 
	Thus, $h \in \mathbf{G}_{\tilde{x}}$ and there is $g \in \mathbf{G}$ such that $h = g^{-1} \bar{g}$.
	Let $y = g \tilde{x} \in \mathbf{X}$. We have 
	\[ \bar{y} = \overline{g \tilde{x}} = \bar{g}\tilde{x} = \bar{g}(h^{-1} \tilde{x}) = g \tilde{x} = y \]
	so $y = \bar{y} \in X$. Hence, $\tilde{\varphi}(y) = [g^{-1} \bar{g}] = [h] \in H^1(\galCR ,\mathbf{G}_{\tilde{x}} )$, and $[h] \in \Im{\varphi}$.
\end{proof}

\begin{cor}
	If $H^1(\mathrm{Gal}(\CC / \RR) , \mathbf{G})$ is trivial, then $\varphi$ is surjective.
\end{cor}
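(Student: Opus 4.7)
The plan is to read the result directly off Proposition \ref{prop:bound_number_of_lifts}. In the proof of that proposition the author establishes the stronger statement that the image of $\varphi$ in $H^1(\galCR, \mathbf{G}_x)$ is exactly the kernel (in the pointed-set sense) of the natural map
\[ H^1(\galCR, \mathbf{G}_x) \to H^1(\galCR, \mathbf{G}). \]
Under the hypothesis that the target is trivial this kernel is all of $H^1(\galCR, \mathbf{G}_x)$, and surjectivity of $\varphi$ is then immediate.

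If one prefers a self-contained argument along the same lines, pick any class $[h] \in H^1(\galCR, \mathbf{G}_x)$, represented by $h \in \mathbf{G}_x$ with $h\bar{h} = 1$. Viewing $h$ as a cocycle in $\mathbf{G}$, the triviality hypothesis furnishes some $g \in \mathbf{G}$ with $h = g^{-1}\bar{g}$. Set $y = g \cdot x$; using that $h \cdot x = x$ one computes
\[ \bar{y} = \bar{g}\cdot x = g(g^{-1}\bar{g})\cdot x = g h \cdot x = g \cdot x = y, \]
so $y \in \mathbf{G}x \cap X$ with $Gy$ closed (it lies in the closed orbit $\mathbf{G}x \cap X$ broken into finitely many $G$-orbits), and $\widetilde{\varphi}(y) = [g^{-1}\bar{g}] = [h]$, as required.

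There is no real obstacle here; the corollary is essentially tautological once the image--kernel identification of Proposition \ref{prop:bound_number_of_lifts} is in hand. The only subtlety worth flagging is that $\mathbf{G}_x$ need not be Abelian, so the $H^1$'s are merely pointed sets and "kernel" must be read as the preimage of the distinguished element; this is precisely the notion used in Proposition \ref{prop:bound_number_of_lifts} and causes no difficulty.
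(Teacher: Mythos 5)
Your proposal is correct and matches the paper's (implicit) argument: the corollary is stated immediately after \Cref{prop:bound_number_of_lifts} with no separate proof, precisely because the image of $\varphi$ equals the kernel of $H^1(\galCR,\mathbf{G}_{x}) \to H^1(\galCR,\mathbf{G})$, which is everything when the target is trivial. Your self-contained unwinding reproduces the "on the other hand" half of the paper's proof of that proposition, so there is nothing genuinely different here.
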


\begin{rem}
	By \cite[X.1, corollay of Prop. 3]{serre_cohomologie_1994d}, if $\mathbf{G}$ is $\mathrm{GL}_n(\CC)$ or $\mathrm{SL}_n(\CC)$, and $G$ is $\mathrm{GL}_n(\RR)$ or $\mathrm{SL}_n(\RR)$, then the cohomology set $H^1(\mathrm{Gal}(\CC/\RR),\mathbf{G})$ is trivial. In that case, $\varphi$ induces a one-to-one correspondence between the fiber $\beta^{-1}(\beta(p(x)))$ and $H^1(\mathrm{Gal}(\CC/\RR), \mathbf{G}_x)$.
\end{rem}

\section{Application to character varieties}\label{section:char-var}

 A broadly studied class of complex GIT quotients that are useful for studying finitely generated groups and moduli spaces of geometric structures are \emph{character varieties}. 
 For a finitely generated group $\Gamma$ and a complex reductive group $\mathbf{G}$, they are defined as $\mathfrak{X}_{\mathbf{G}}(\Gamma) = \Hom(\Gamma, \mathbf{G}) \mathbf{ \sslash \mathbf{G}}$, where $\mathbf{G}$ acts by conjugation.
 We recall below the general setting and definition, but we refer to the survey of Sikora \cite{sikora_character_2012a} and the references within for a more complete exposition.
 
 In our general notation, we will have $\mathbf{X} = \Hom(\Gamma , \mathbf{G})$, and given a real form $G$ of $\mathbf{G}$, the corresponding real form will be then $X = \Hom(\Gamma , G)$.
 We recall below some known facts about character varieties when $\mathbf{G}$ is a classical complex group. Note that in most cases, the trace functions generate the ring of invariant functions, so it is not difficult to compute the complex conjugation of $\mathbf{X \sslash G}$, even in explicit examples.
 
 In \cite[Proposition 6.1]{casimiro_topology_2014}, Casimiro, Florentino, Lawton and Oliveira observe that te points of $\Hom(\Gamma , G)$ are projected to the real points $\mathfrak{X}_\mathbf{G}(\Gamma)(\RR)$. We deal here with the converse statement of lifting real characters to representations with values in real forms.
 Lifting real points of the a character variety $\mathfrak{X}_\mathbf{G}(\Gamma)$ to $\Hom(\Gamma, G)$ had already been considered by Johnson and Millson in \cite{johnson_deformation_1987} for the groups $G = \mathrm{SO}(n,1)$. 
 The author has also studied lifting of real points of $\mathrm{SL}_n(\CC)$-character varieties in \cite{acosta_character_2019a}, while carefully describing the example $\Gamma = \ZZ/3\ZZ * \ZZ / 3 \ZZ$, and has proven the lifting property, by considering independently each case, for the complex groups $\mathrm{GL}_n(\CC)$, $\mathrm{SL}_n(\CC)$, $\mathrm{O}_n(\CC)$, $\mathrm{SO}_n(\CC)$ or $\mathrm{Sp}_{2n}(\CC)$ in 
 \cite{acosta_character_2019}. \Cref{thm:lift_real_characters} generalizes all the results.

\subsection{Character varieties as GIT quotients}

 Let $\Gamma$ be a finitely generated group and $\mathbf{G}$ be a complex reductive algebraic group. If $(\gamma_1, \dots , \gamma_s)$ is a set of generators for $\Gamma$, the set $\Hom(\Gamma,\mathbf{G})$ of representations of $\Gamma$ into $\mathbf{G}$ can be identified with a subset of $\mathbf{G}^s$ via the injective map
 \[
 \begin{aligned}
 \Hom(\Gamma , \mathbf{G}) &\to \mathbf{G}^s \\
 \rho &\mapsto (\rho(\gamma_1), \dots \rho(\gamma_s)).
 \end{aligned}
 \]
 The image of this map is the subset of $\mathbf{G}^s$ for which the elements satisfy the relations of $\Gamma$. In this way, $\Hom(\Gamma,\mathbf{G})$ is given by polynomial equations and is a subvariety of $\mathbf{G}^s$.
 The group $\mathbf{G}$ acts algebraically on $\mathbf{G}^s$ by simultaneous conjugation, and the subvariety $\Hom(\Gamma,\mathbf{G})$ is stable for this action. Thus, we can consider the \emph{$\mathbf{G}$-character variety} of $\Gamma$, that is
 the GIT quotient \[\mathfrak{X}_\mathbf{G}(\Gamma) = \Hom(\Gamma , \mathbf{G}) \sslash \mathbf{G}.\]

 Recall that a subgroup $\mathbf{H}$ of $\mathbf{G}$ is \emph{irreducible} it is not contained in any proper parabolic subgroup of $\mathbf{G}$, and \emph{completely reducible} if for every parabolic subgroup $\mathbf{P}$ of $\mathbf{G}$ containing $\mathbf{H}$ there is a Levi subgroup $\mathbf{L}$ of $\mathbf{P}$ such that $\mathbf{H \subset L}$.
 Then, as stated in \cite[Theorem 30]{sikora_character_2012a}, the closed orbits of this action are precisely the ones of \emph{completely reducible representations}, meaning that their image in $\mathbf{G}$ is a completely reducible subgroup. 

 Observe that if $\mathbf{G}$ is a linear group, the trace functions, defined as
 $\rho \mapsto \tr(\rho(\gamma))$ for some $\gamma \in \Gamma$, are invariant functions on $\Hom(\Gamma , \mathbf{G})$. 
 For most of classical groups $\mathbf{G}$, the character variety $\mathfrak{X}_\mathbf{G}(\Gamma)$ can be identified with a subvariety of some $\CC^m$ using trace functions. This justifies the terminology, as well as the name \emph{character map} for the projection $\chi \colon \Hom(\Gamma, \mathbf{G}) \to  \mathfrak{X}_\mathbf{G}(\Gamma)$, that we write $\rho \mapsto \chi_\rho$. 
 Indeed, whenever $\mathbf{G}$ is a classical complex group, such as $\mathrm{GL}_n(\CC)$, $\mathrm{SL}_n(\CC)$, $\mathrm{O}_n(\CC)$, $\mathrm{SO}_n(\CC)$ or $\mathrm{Sp}_{2n}(\CC)$, there are known families of generators for the ring of invariant functions 
 $ \CC[\Hom(\Gamma,\mathbf{G})]^{\mathbf{G}} \simeq
 \CC[ \mathfrak{X}_\mathbf{G}(\Gamma) ]$.
 %
 Procesi proves in \cite{procesi_invariant_1976} that the trace functions generate the ring of invariant functions for the groups $\mathrm{GL}_n(\CC)$ and $\mathrm{SL}_n(\CC)$ (Theorem 1.3), as well as $\mathrm{O}_n(\CC)$ (Theorem 7.1) and $\mathrm{Sp}_{2n}(\CC)$ (Theorem 10.1). 
 Moreover, he gives a (very large) generating set of the ring of invariant functions depending on a generating set for $\Gamma$. More efficient generating sets are given by Sikora in \cite{sikora_generating_2013}, where he also proves
 that the trace functions are generators for $\mathrm{SO}_{2n+1}(\CC)$-character varieties.

 For $\mathrm{SO}_{2n}(\CC)$-representations, the set of trace functions does no longer generate the ring of invariant functions, as pointed out by Sikora in \cite{sikora_so2nccharacter_2017}. However, following \cite{sikora_generating_2013}, 
 the ring $\CC[ \mathrm{Hom}(\Gamma,\mathrm{SO}_{2n}(\CC))]^{\mathrm{SO}_{2n}(\CC)}$ is generated by the trace functions and by the functions $Q_{\gamma_1,\dots,\gamma_n}$, which are defined as follows for $\gamma_1,\dots,\gamma_n \in \Gamma$.
 Consider the function $Q_{2n} : (\mathcal{M}_{2n}(\CC))^n \to \CC$, defined by
 \[
 Q_{2n}(A_1,\dots,A_m) = 
 \sum_{\sigma \in \mathfrak{S}_{2m}} 
 \epsilon(\sigma)
 \prod_{i=1}^{m}
 ((A_i)_{\sigma(2i-1)\sigma(2i)}
 -(A_i)_{\sigma(2i)\sigma(2i-1)}).
 \]
 Observe that the function $Q_m$ is invariant under $\mathrm{SO}_{2n}(\CC)$-conjugation, but not by  $\mathrm{O}_{2n}(\CC)$-conjugation.
 Hence, function $Q_{2n}$ induces an invariant function $Q_{\gamma_1,\dots,\gamma_n}$ on $\mathrm{Hom}(\Gamma,\mathrm{SO}_{2n}(\CC))$ defined by
 \[Q_{\gamma_1,\dots,\gamma_n}(\rho) = Q_{2n}(\rho(\gamma_1),\dots,\rho(\gamma_n)).\]
 
 We consider also the real points of $\mathbf{G}$, $\Hom(\Gamma, \mathbf{G})$ and $\mathfrak{X}_\mathbf{G}(\Gamma)$, given by a real form $G$ of $\mathbf{G}$ and its action on $\Hom(\Gamma,G)$. Observe that two twisted real forms give the same involutive antiholomorphic automorphism of $\mathfrak{X}_\mathbf{G}(\Gamma)$.
 
 The real forms of the classical groups are defined by involutions of the form $A \mapsto J \bar{A} J^{-1}$ or $A \mapsto J {}^t\! \bar{A}^{-1} J^{-1}$ (see for example the classification in \cite{onishchik_lie_1994}). Observe that they are all twisted for all the groups except $\mathrm{SL}_n(\CC)$ and $\mathrm{GL}_n(\CC)$, where the two classes are different.
 By \Cref{prop:complex_conjug_compatible_GIT}, these involutions induce two real structures on $\mathfrak{X}_\mathbf{G}(\Gamma)$, defined by involutions $\Phi_1$ and $\Phi_2$.
 On the one hand, using trace functions (and possibly functions of the form $Q_{\gamma_1,\dots,\gamma_n}$) as coordinates to embed $\mathfrak{X}_\mathbf{G}(\Gamma)$ into some $\CC^m$, the involution $\Phi_1$ is induced by the usual complex conjugation on $\CC^m$.
 On the other hand, the involution $\Phi_2$, for $\mathrm{GL}_n(\CC)$ and $\mathrm{SL}_n(\CC)$ is given as follows.
 Let $\gamma_1, \dots , \gamma_s \in \Gamma$ be such that the map
 \[
 \begin{aligned}
 \Hom(\Gamma,\mathbf{G}) &\to \CC^{2s} \\
 \rho &\mapsto \left(\tr(\rho(\gamma_1)) , \dots , \tr(\rho(\gamma_s)) , 
 \tr(\rho(\gamma_1^{-1})) , \dots , \tr(\rho(\gamma_s^{-1})) \right)
 \end{aligned}
 \]
 induces an embedding of $\mathfrak{X}_\mathbf{G}(\Gamma)$ into $\CC^{2s}$. Then $\Phi_2$ is induced by the map
 \[
 \begin{aligned}
 \CC^{2s} &\to \CC^{2s} \\
 (z_1, \dots , z_s , w_1 , \dots , w_s)
 & \mapsto ( \bar{w_1} , \dots , \bar{w_s} , \bar{z_1} , \dots \bar{z_s} )
 \end{aligned}.
 \]
 
 Whenever the real form $G$ of one of the complex groups is the compact real form, the topological quotient $\Hom(\Gamma, K)/K$ is also compact. When $K = \mathrm{SU}(n)$, Procesi and Schwartz prove in \cite{procesi_inequalities_1985} that $\Hom(\Gamma, G)/G$ is a semi-algebraic set.
 Furthermore, if $K$ is a maximal compact subgroup of a complex or real algebraic reductive
 group $\mathbf{G}$, some recent results prove there is a strong deformation retraction from the set $\Hom(\Gamma, \mathbf{G}) \mathbf{ \sslash G}$ to the quotient $\Hom(\Gamma, K)/K$. This fact is proven for Abelian groups by
 Florentino and Lawton in \cite{florentino_topology_2014}, for free groups by Casimiro, Florentino, Lawton and Oliveira
 in \cite{casimiro_topology_2014}, and for nilpotent groups by Bergeron in \cite{bergeron_topology_2015}.

 \subsection{Lifting good characters}
 
 Consider a classical complex group $\mathbf{G}$ and a real character $\chi_\rho \in \mathfrak{X}_\mathbf{G}(\Gamma)$ for one of the involutions $\Phi_1$ or $\Phi_2$ defined above. We are going to use \Cref{thm:lift_real_irred} to lift $\chi_\rho$ to a representation $\rho'$ with values in some real form $G'$ of $\mathbf{G}$, up to $G'$-conjugation.
 To do so, we first need a condition implying that the character is \emph{neat} in the sense of \Cref{def:irred_point_quotient}.
 
 The notion that seems adapted in this setting is the one of a \emph{good} representation. Following Sikora in \cite[Sections 3 and 4]{sikora_character_2012a} and Johnson and Millson in \cite{johnson_deformation_1987}, a representation $\rho \in \Hom(\Gamma , \mathbf{G})$ is \emph{good} if its $\mathbf{G}$-orbit is closed and the stabilizer of $\rho$ in $\mathbf{G}$ is the center of $\mathbf{G}$.
 These conditions are precisely conditions (\ref{cond_def:irred_closed_orbit}) and (\ref{cond_def:irred_central_stabilizer}) of \Cref{def:irreducible_point_X}, so a representation $\rho \in \Hom(\Gamma, \mathbf{G})$ is a neat point in this sense if and only if it is good and a regular point of $\Hom(\Gamma, \mathbf{G})$.
 
 Furthermore, if a representation is good then it is irreducible, but the converse is not always true. A convenient notion that implies goodness is the $\Ad$-irreducibility.
 Let $\mathfrak{g}$ be the Lie algebra of $\mathbf{G}$. Then, the adjoint representation of $\mathbf{G}$ induces a natural map $\Hom(\Gamma , \mathbf{G}) \to \Hom(\Gamma , \mathrm{GL}(\mathfrak{g}))$. We say that a representation $\rho \in \mathrm{Hom}(\Gamma , \mathbf{G})$ is \emph{$\Ad$-irreducible} if $\Ad \circ \rho \colon \Gamma \to \mathrm{GL}(\mathfrak{g})$ is irreducible. Thus, we have, for $\rho \in \Hom(\Gamma,\mathbf{G})$,
 \[
 \rho \text{ is } \Ad \text{-irreducible }
 \implies
 \rho \text{ is good }
 \implies
 \rho \text{ is irreducible}.
 \]
 The first implication is given by Schur's lemma, and the three statements are equivalent if $\mathbf{G}$ is $\mathrm{GL}_n(\CC)$ or $\mathrm{SL}_n(\CC)$. However, as pointed out by Sikora in \cite[Section 4]{sikora_character_2012a}, for the other classical complex groups there are irreducible representations that are not good.
 Observe also that, if $\mathbf{G} \subset \mathrm{GL}_n(\CC)$ is a linear irreducible subgroup, then $\rho \in \Hom(\Gamma , \mathbf{G})$ is $\Ad$-irreducible if and only if it is irreducible when seen as a representation with values in $\mathrm{GL}_n(\CC)$.
 Considering good representations, we prove the following theorem for lifting good real characters to representations with values in a real form.
 
  \begin{thm}\label{thm:lift_real_characters}
  	Let $\Gamma$ be a finitely generated group and
	$\mathbf{G}$ a connected complex reductive algebraic group.
	Let $\Phi$ be an antiholomorphic involution of $\mathfrak{X}_{\mathbf{G}}(\Gamma)$ induced by a real form $G$ of $\mathbf{G}$.
  	Let $\rho \in \Hom(\Gamma , \mathbf{G})$ be a good representation such that $\Phi(\chi_\rho) = \chi_\rho$.
  	Then, there exists a real form $G'$ of $\mathbf{G}$ such that 
  	$\rho \in \Hom(\Gamma,G')$.
  \end{thm}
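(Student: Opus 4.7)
The plan is to transplant into the representation-variety setting the cocycle construction that underlies the proof of \Cref{thm:lift_real_irred}. Applying that theorem directly to $\mathbf{X} = \Hom(\Gamma,\mathbf{G})$ with the conjugation action of $\mathbf{G}$ would require $\rho$ to be a regular point of $\Hom(\Gamma,\mathbf{G})$, a condition not among the hypotheses here. However, the conclusion we need is weaker: we only have to produce the twisted real form $G'$ of $\mathbf{G}$ and to check that $\rho$ already takes values in it. The two conditions defining a good representation---closedness of $\mathbf{G}\cdot\rho$ and centrality of its stabilizer---are exactly what drives the construction and suffice for that weaker conclusion.

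First I would set up the cocycle. The conjugation action of $\mathbf{G}$ on $\Hom(\Gamma,\mathbf{G})$ is trivial on $Z(\mathbf{G})$, and the real form $G$ induces on $\mathbf{X}$ the complex conjugation $\rho \mapsto \bar\rho$ with $\bar\rho(\gamma) = \overline{\rho(\gamma)}$. By \Cref{prop:complex_conjug_compatible_GIT} the equality $\Phi(\chi_\rho) = \chi_\rho$ translates to $\pi(\bar\rho) = \pi(\rho)$, so $\bar\rho$ and $\rho$ lie in the same fibre of $\pi$. Since $\rho$ is good, $\mathbf{G}\cdot\rho$ is closed and is the unique closed orbit in this fibre, so there exists $h \in \mathbf{G}$ with $\rho = h\,\bar\rho\,h^{-1}$. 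Applying the complex conjugation to this identity gives $\bar\rho = \bar h\,\rho\,\bar h^{-1}$, and substituting back yields $\rho = (h\bar h)\,\rho\,(h\bar h)^{-1}$. Hence $h\bar h$ lies in the stabiliser of $\rho$, which by goodness equals $Z(\mathbf{G})$.

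Next I would define $\theta \colon \mathbf{G} \to \mathbf{G}$ by $\theta(g) = h\,\bar g\,h^{-1}$. The relation $h\bar h \in Z(\mathbf{G})$ makes $\theta$ an antiholomorphic involution of $\mathbf{G}$, and by construction it differs from the conjugation defining $G$ by inner conjugation by $h$ with $h\bar h$ central, so its fixed locus $G' = \{g \in \mathbf{G} : \theta(g) = g\}$ is a twisted form of $G$ in the sense of the paper. Because $\mathbf{G}$ is connected, hence irreducible, and the identity $e \in \mathbf{G}$ is a smooth fixed point of $\theta$, \Cref{prop:simple_point_real_form} guarantees that $G'$ is indeed a real form of $\mathbf{G}$. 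Finally, for each $\gamma \in \Gamma$, the identity $\rho = h\,\bar\rho\,h^{-1}$ evaluated at $\gamma$ reads $\rho(\gamma) = h\,\overline{\rho(\gamma)}\,h^{-1} = \theta(\rho(\gamma))$, so $\rho(\gamma)$ is $\theta$-fixed; thus $\rho(\Gamma) \subset G'$ and $\rho \in \Hom(\Gamma,G')$.

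There is no serious obstacle: the argument amounts to careful bookkeeping of the cocycle $h$ supplied by the two defining properties of a good representation. The only conceptual subtlety is to notice that the regularity clause in \Cref{def:irreducible_point_X} is needed in \Cref{thm:lift_real_irred} only to turn the fixed locus of $\sigma$ in an irreducible component $\mathbf{Y} \subset \mathbf{X}$ into a real form of $\mathbf{Y}$ via \Cref{prop:simple_point_real_form}; here we instead apply that proposition to $\mathbf{G}$ itself, for which the smooth fixed point $e$ is always available, so no regularity hypothesis on $\rho$ inside $\Hom(\Gamma,\mathbf{G})$ is required.
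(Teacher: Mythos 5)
Your proof is correct, and it takes a mildly but genuinely different route from the paper's. The paper proves the theorem by reducing to the case where $\Gamma$ is free: it fixes generators, embeds $\Hom(\Gamma,\mathbf{G})$ equivariantly into $\Hom(\Gamma',\mathbf{G})\simeq\mathbf{G}^s$ for the free group $\Gamma'$ on those generators, notes that $\mathbf{G}^s$ is irreducible with every point regular so that a good $\rho$ becomes a \emph{neat} point there, and then invokes \Cref{thm:lift_real_irred} as a black box, finally identifying the resulting real form $Y'$ of $\Hom(\Gamma',\mathbf{G})$ with $\Hom(\Gamma',G')$. You instead unwind the cocycle construction inside that theorem and rerun it directly: the element $h$ with $\rho=h\bar\rho h^{-1}$, the centrality of $h\bar h$ forced by goodness, the twisted involution $\theta(g)=h\bar g h^{-1}$, and the application of \Cref{prop:simple_point_real_form} to $\mathbf{G}$ itself (connected, with $e$ a smooth fixed point) rather than to an irreducible component of $\mathbf{X}$. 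Your observation that the regularity clause in \Cref{def:irreducible_point_X} is only needed to make the fixed locus a real form of the component $\mathbf{Y}$ --- which is not part of the conclusion here --- is exactly right, and it is the same observation that the paper implements via the free-group trick. What your version buys is self-containedness and the avoidance of the (harmless but implicit) check that a good point of $\Hom(\Gamma,\mathbf{G})$ remains good in $\Hom(\Gamma',\mathbf{G})$; what the paper's version buys is that the theorem is exhibited as a direct corollary of the general lifting result. One small step you pass over quickly: to get $h$ you need the orbit of $\bar\rho$ to be closed as well, which holds because the antiholomorphic involution is a homeomorphism intertwining the two actions --- worth a half-sentence, but not a gap.
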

  
  \begin{proof}
  	We can suppose, without lost of generality, that $\Gamma$ is a free group. Indeed, if we fix a set of generators for $\Gamma$ and $\Gamma'$ is the free group on these generators, we have the closed $\mathbf{G}$-equivariant immersion $\Hom(\Gamma , \mathbf{G}) \hookrightarrow \Hom(\Gamma', \mathbf{G})$.
  	Thus, we have also an inclusion $ \mathfrak{X}_\mathbf{G}(\Gamma) \subset \mathfrak{X}_\mathbf{G}(\Gamma')$.
  	Hence, if, given a real character $\chi_\rho \in \mathfrak{X}_\mathbf{G}(\Gamma)$, we can lift it to $\rho' \in \Hom(\Gamma', G')$ for some real form $G'$ of $\mathbf{G}$, then the lift will be in $\Hom(\Gamma, G')$.
  	
  	Now, note that if $\Gamma$ is a free group of order $s$, then $\Hom(\Gamma , \mathbf{G}) \simeq \mathbf{G}^s$ is irreducible, and all its points are regular. Since we suppose that $\rho$ is good, it is a neat point of $\Hom(\Gamma , \mathbf{G})$.
  	Thus, we can apply \Cref{thm:lift_real_irred}, and obtain real forms $G'$ of $\mathbf{G}$ and $Y'$ of $\Hom(\Gamma, \mathbf{G})$ such that $G'$ is a twisted form of the original real form $G$ of $\mathbf{G}$, $Y'$ is $G'$-invariant and $\rho \in Y'$.
  	Furthermore, there is $h \in \mathbf{G}$ such that $G'$ is the set of fixed points of the involution $g \mapsto h\bar{g}h^{-1}$ and $Y'$ is the set of fixed points of the involution $\rho' \mapsto h \cdot \bar{\rho}'$. Since $\mathbf{G}$ acts by conjugation, $Y'$ is precisely $\Hom(\Gamma, G')$.
  \end{proof}
  
  Since we can apply the previous theorem for classical complex groups and the fact of being good is implied by $\Ad$-irreducibility, we obtain immediately the following corollary.
  
   \begin{cor}
   	Let $\mathbf{G}$ be a classical complex group. Let $\Phi = \Phi_1$ or $\Phi_2$ be an involutive antiholomorphic automorphism of $\mathfrak{X}_{\mathbf{G}}(\Gamma)$.
   	Let $\rho \in \Hom(\Gamma , \mathbf{G})$ be an $\Ad$-irreducible representation such that $\Phi(\chi_\rho) = \chi_\rho$.
   	Then, there exists a real form $G'$ of $\mathbf{G}$ such that 
   	$\rho \in \Hom(\Gamma,G')$.
   	Moreover, $G'$ is a twisted form of the split real form of $\mathbf{G}$ if $\Phi = \Phi_1$ and a twisted form of the compact real form of $\mathbf{G}$ if $\Phi = \Phi_2$.
   \end{cor}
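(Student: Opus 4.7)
The plan is to reduce the corollary to \Cref{thm:lift_real_characters}. The hypothesis of that theorem requires $\rho$ to be a good representation, and the paper has already observed (as a consequence of Schur's lemma) that $\Ad$-irreducibility implies goodness: the commutant of an $\Ad$-irreducible $\Ad \circ \rho$ consists only of scalars, so $\Ad(\mathbf{G}_\rho)$ lies in the scalar matrices of $\mathrm{GL}(\mathfrak{g})$, which combined with $\ker \Ad = Z(\mathbf{G})$ forces $\mathbf{G}_\rho = Z(\mathbf{G})$; and $\Ad$-irreducible representations are completely reducible, so by Sikora's characterization (Theorem 30 of \cite{sikora_character_2012a}) the orbit $\mathbf{G}\cdot\rho$ is closed. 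Hence $\rho$ is good, and \Cref{thm:lift_real_characters} will produce a real form $G'$ of $\mathbf{G}$ with $\rho \in \Hom(\Gamma, G')$ which is a twisted form of whatever starting real form $G$ induces $\Phi$.

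It then remains only to identify the correct twisted class depending on whether $\Phi = \Phi_1$ or $\Phi_2$. Using the description of real forms of classical complex groups recalled above, the antiholomorphic involutions defining them fall in one of the two families
\[
\tau_1 \colon A \mapsto J \bar{A} J^{-1}
\qquad \text{or} \qquad
\tau_2 \colon A \mapsto J \, {}^t\bar{A}^{-1} J^{-1},
\]
and the split (resp.\ compact) real form corresponds to $J = I$ in the first (resp.\ second) family. A direct calculation on the trace coordinates used to embed $\mathfrak{X}_\mathbf{G}(\Gamma)$ into some $\CC^m$ shows that $\tau_1$ induces $\Phi_1$, since $\tr\bigl(J \conjug{\rho(\gamma)} J^{-1}\bigr) = \conjug{\tr\rho(\gamma)}$, while $\tau_2$ induces $\Phi_2$, since $\tr\bigl(J \, {}^t\conjug{\rho(\gamma)}^{-1} J^{-1}\bigr) = \conjug{\tr\rho(\gamma^{-1})}$; the same computation extends without change to the auxiliary invariants $Q_{\gamma_1,\dots,\gamma_n}$ needed in the $\mathrm{SO}_{2n}(\CC)$ case.

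One then applies \Cref{thm:lift_real_characters} with $G$ taken to be the split real form of $\mathbf{G}$ when $\Phi = \Phi_1$ and the compact real form when $\Phi = \Phi_2$. In both cases, $\Phi$ is indeed induced by $G$, so the theorem produces a twisted form $G'$ of this $G$ containing $\rho$. The main point of care, and the only real obstacle, is this last bookkeeping step: since the theorem outputs a twisted form of the chosen starting $G$, one must initialize $G$ in the desired class (split or compact), which is precisely what the calculation of the previous paragraph justifies.
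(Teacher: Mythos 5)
Your proposal is correct and follows exactly the route the paper intends: the paper gives no explicit proof, stating only that the corollary follows ``immediately'' from \Cref{thm:lift_real_characters} together with the implication ($\Ad$-irreducible $\implies$ good) recorded just before it, and your write-up simply fills in those two ingredients plus the bookkeeping (already set up in the paper's discussion of $\Phi_1$, $\Phi_2$ and the two families of involutions $A \mapsto J\bar{A}J^{-1}$ and $A \mapsto J\,{}^t\bar{A}^{-1}J^{-1}$) identifying which starting real form induces which involution. No gaps.
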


\section{A detailed example}\label{section:example}
 We study here the particular example of the $\mathrm{SL}_3(\CC)$-character variety of $\ZZ$, that we write $\mathfrak{X}_{\mathrm{SL}(3,\CC)}(\ZZ)$.
 Since $\Hom(\ZZ , \mathrm{SL}_3(\CC) ) \simeq \mathrm{SL}_3(\CC)$, this variety is the GIT quotient of $\mathrm{SL}_3(\CC)$ by its inner automorphisms.
 We identify first the complex character variety, isomorphic to $\CC^2$, and describe its real points for the two involutive antiholomorphic automorphisms induced by the real forms of $\mathrm{SL}_3(\CC)$. Then, we study the link between those real points and the real GIT quotients for the real forms $\mathrm{SL}_3(\RR)$, $\mathrm{SU}(3)$ and $\mathrm{SU}(2,1)$.
 
 In order to avoid heavy notation, we keep the general terms for GIT quotients. Therefore, we let $\mathbf{X} = \Hom(\ZZ, \mathrm{SL}_3(\CC) ) \simeq \mathrm{SL}_3(\CC)$ and $\mathbf{G} = \mathrm{SL}_3(\CC)$. The group $G$ will be one of the real forms $\mathrm{SL}_3(\RR)$, $\mathrm{SU}(3)$ or $\mathrm{SU}(2,1)$, and $X = \Hom(\ZZ,G) \simeq G$.

\subsection{The complex character variety}
 We are going to identify the character variety $\mathfrak{X}_{\mathrm{SL}(3,\CC)}(\ZZ)$ with $\CC^2$, as the image of the map $M \mapsto (\tr(M), \tr(M^{-1}))$.
 First, observe that if $M \in \mathrm{SL}_3(\CC)$, its characteristic polynomial is $X^3 - \tr(M)X^2 + \tr(M^{-1})X -1$, so the function $\tr(M^k)$ is a polynomial in $\tr(M)$ and $\tr(M^{-1})$ for all $k \in \ZZ$ by the Newton identities.
 Since the $\mathrm{SL}_3(\CC)$-invariant functions of $\mathrm{Hom}(\Gamma, \mathrm{SL}_3(\CC))$ are generated by the trace functions, they are generated by $\tr(M)$ and $\tr(M^{-1})$.
 Moreover, the map $M \mapsto (\tr(M), \tr(M^{-1}))$ is surjective,
 so $\mathfrak{X}_{\mathrm{SL}(3,\CC)}(\ZZ)$ is isomorphic to $\CC^2$.

 Note also that the orbit of $M \in \mathrm{SL}_3(\CC)$ by conjugation is closed if and only if $M$ is diagonalizable.
 Thus, the closed orbits are classified by the spectrum counted with multiplicity; a lift in $\mathrm{SL}_3(\CC)$ of the point $(z,w) \in \CC^2$ is a diagonal matrix with entries the roots of $X^3 -zX^2 + wX -1$.
 
 There are three types of closed orbits, depending on the multiplicities of the eigenvalues. The corresponding stabilizers, that we will use to determine the number of lifts of real points, are in \Cref{table:stabilizers}. Note that there are no neat points, since no stabilizer is trivial.
 
  \begin{table}
  	\centering
  	\[
  	\begin{array}{c c}
  	\toprule
  		\text{Semisimple element}
  		& \text{Stabilizer in } \mathrm{SL}_3(\CC) \\ 
  		\midrule 
  		\begin{psmallmatrix}
  		a & 0 & 0 \\
  		0 & b & 0 \\
  		0 & 0 & (ab)^{-1} 
  		\end{psmallmatrix}
  		&
  		\left\{
  		\begin{psmallmatrix}
  		u & 0 & 0 \\
  		0 & v & 0 \\
  		0 & 0 & (uv)^{-1}
  		\end{psmallmatrix}
  		\mid
  		u,v \in \CC^*
  		\right\}
  		\simeq (\CC^*)^2 \\ 
  		\midrule[0pt]
  		\begin{psmallmatrix}
  		a & 0 & 0 \\
  		0 & a & 0 \\
  		0 & 0 & a^{-2} 
  		\end{psmallmatrix}
  		&
  		\left\{
  		\begin{psmallmatrix}
  		M & 0  \\
  		0 & \det(M)^{-1}
  		\end{psmallmatrix}
  		\mid
  		M \in \mathrm{GL}_2(\CC)
  		\right\}
  		\simeq \mathrm{GL}_2(\CC) \\ 
  		\midrule[0pt]
  		\begin{psmallmatrix}
  		a & 0 & 0 \\
  		0 & a & 0 \\
  		0 & 0 & a 
  		\end{psmallmatrix}
  		&
  		\mathrm{SL}_3(\CC)\\ 
  		\bottomrule
  	\end{array}
  	\]
  	\caption{Stabilizers of diagonal matrices in $\mathrm{SL}_3(\CC)$.}\label{table:stabilizers}
  \end{table}

\subsection{Real forms of SL(3,C)}\label{subsect:real_points_sl3c_char_var}
Up to inner automorphisms, there are three real forms of $\mathrm{SL}_3(\CC)$, namely $\mathrm{SL}_3(\RR)$, $\mathrm{SU}(3)$ and $\mathrm{SU}(2,1)$. 
The corresponding complex conjugations are, respectively, the usual complex conjugation $M \mapsto \bar{M}$ and the involutions $\tau_1$ and $\tau_2$ given by
\begin{align*}
	\tau_1(M) &={}^t\! \bar{M}^{-1} &
	&\text{ and }&
	\tau_2(M) &=I_{2,1} {}^t\! \bar{M}^{-1} I_{2,1}
\end{align*}
where $I_{2,1} =
\begin{psmallmatrix}
1 & 0 & 0 \\
0 & 1 & 0 \\
0 & 0 & -1
\end{psmallmatrix}
$.
The usual complex conjugation induces, in $\mathfrak{X}_{SL(3,\CC)}(\ZZ) \simeq \CC^2$, the involution $\Phi_1 : (z,w) \mapsto (\bar{z},\bar{w})$, while $\tau_1$ and $\tau_2$ both induce the involution $\Phi_2 : (z,w) \mapsto (\bar{w}, \bar{z})$.
Hence, for the fixed points of these involutions, we have:
\begin{align*}
	\mathfrak{X}_{\mathrm{SL}(3,\CC)}(\ZZ)^{\Phi_1} &\simeq \{(z,w)\in \CC^2 \mid z,w \in \RR \} \simeq \RR^2
	\\
	\mathfrak{X}_{\mathrm{SL}(3,\CC)}(\ZZ)^{\Phi_2} &\simeq \{(z,w)\in \CC^2 \mid w = \bar{z} \} \simeq \CC
\end{align*}

In order to have some information on the fibers of the $\beta$ maps, we compute the first cohomology sets of the stabilizers of diagonalizable matrices.

	There are two types of subgroups conjugated to $(\CC^*)^2$ in $\mathrm{SL}_3(\CC)$ for each of the real forms $\mathrm{SL}_3(\RR)$ and $\mathrm{SU}(2,1)$.
	In \Cref{table:cardinality_H1}, the groups $(\CC^*)^2$ of type $(a)$ are the stabilizers of elements diagonalizable in the group (with real eigenvalues in $\mathrm{SL}_3(\RR)$ and elliptic in $\mathrm{SU}(2,1)$)
	whereas the groups $(\CC^*)^2$ of type $(a)$ are the stabilizers of elements which are not diagonalizable in the group (with a non-real eigenvalue in $\mathrm{SL}_3(\RR)$ and loxodromic in $\mathrm{SU}(2,1)$).

\begin{lemme}\label{lemma:computation_h1}
	Let $\mathbf{H} \subset \mathrm{SL}_3(\CC)$ be one of the groups of \Cref{table:stabilizers}, and $\tau : \mathrm{SL}_3(\CC) \to \mathrm{SL}_3(\CC)$ be either the usual complex conjugation or one of the involutions $\tau_1$ or $\tau_2$.
	Then, the cardinal of $H^1_\tau(\galCR ,\mathbf{H} )$ is the value given in \Cref{table:cardinality_H1}.
\end{lemme}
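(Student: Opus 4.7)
The plan is to treat each of the nine entries of the table by direct computation. For each pair (stabilizer $\mathbf{H}$, involution $\tau$), I would first describe the restriction of $\tau$ to $\mathbf{H}$ and then compute cocycles modulo coboundaries. To ensure that $\tau$ actually restricts to $\mathbf{H}$, I choose, for each orbit type in Table~\ref{table:stabilizers}, a representative whose $\mathbf{G}$-orbit contains a $\tau$-fixed point; in particular, for the torus stabilizer $(\CC^*)^2$ I must distinguish subtype~(a), where all eigenvalues are individually $\tau$-fixed, from subtype~(b), where two of them are exchanged by $\tau$ (complex-conjugate pair for $\mathrm{SL}_3(\RR)$, loxodromic pair for $\mathrm{SU}(2,1)$).

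The computations then reduce to three elementary building blocks. First, $H^1(\galCR,\CC^*) = 1$ for the usual conjugation $z \mapsto \bar{z}$ by Hilbert's Theorem~90, while $H^1_\tau(\galCR,\CC^*) \simeq \RR^*/\{t \in \RR \mid t>0\} \simeq \ZZ / 2\ZZ$ for the compact involution $z \mapsto \bar{z}^{-1}$. Second, $H^1(\galCR,\mathrm{GL}_n(\CC)) = 1$ for usual conjugation, and for $M \mapsto {}^t\!\bar{M}^{-1}$ the set is in bijection with isomorphism classes of non-degenerate Hermitian forms on $\CC^n$, parametrised by signature. Third, the analogous statement for $\mathrm{SL}_n(\CC)$, where the determinant condition cuts the list down to $\lfloor n/2\rfloor + 1$ classes for the compact involution. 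All three facts can be read off from \cite[Chapter~X]{serre_cohomologie_1994d}, as already quoted in the introduction.

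The case analysis then proceeds as follows. For the torus of type~(a), $\tau$ acts factor by factor on $(\CC^*)^2$, so $H^1$ splits as a product and the answer follows from the $\CC^*$ calculation above. For type~(b), $\tau$ permutes the two $\CC^*$-factors (possibly composed with $z \mapsto \bar{z}^{-1}$), and Shapiro's lemma identifies $H^1(\galCR,(\CC^*)^2)$ with the $H^1$ of a single $\CC^*$ for the induced action on the $\tau$-fixed diagonal factor. For $\mathbf{H} = \mathrm{GL}_2(\CC)$ (repeated-eigenvalue case), the restriction of $\tau$ is conjugate either to the usual complex conjugation on $\mathrm{GL}_2(\CC)$ or to $M \mapsto {}^t\!\bar{M}^{-1}$, depending on the real form chosen, and the Hermitian-form count applies directly. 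Finally, for $\mathbf{H} = \mathrm{SL}_3(\CC)$ (scalar element), the restriction of $\tau$ is the defining involution of the real form itself, and the third building block yields the answer.

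The main obstacle in this scheme is the careful bookkeeping required to read off the correct twisted action on the stabilizer. In subtype~(b) of the torus one must choose an explicit element $g \in \mathrm{SL}_3(\CC)$ conjugating the standard diagonal torus to a $\tau$-stable one, and then compute the action of the composition of $\mathrm{Ad}(g^{-1}\tau(g))$ with $\tau$ on $(\CC^*)^2$; it is at this step that the ``split'' or ``compact-type'' nature of the induced $\CC^*$-action is fixed, and hence whether the corresponding entry of the table is $1$ or $\ZZ/2\ZZ$. A similar subtlety arises for the $\mathrm{GL}_2(\CC)$-stabilizer inside $\mathrm{SU}(2,1)$, where the sign of the diagonal block of $I_{2,1}$ determines which of the two involutions of $\mathrm{GL}_2(\CC)$ is induced.
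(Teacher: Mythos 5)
Your proposal is correct and matches the paper's proof in all essentials: both proceed entry by entry through the table, identify the restriction of $\tau$ to each stabilizer (distinguishing the two torus subtypes exactly as you describe), and reduce everything to Hilbert's Theorem~90 and the classification of non-degenerate Hermitian forms by signature. The only cosmetic difference is that you package the type-(b) torus case as Shapiro's lemma for the Weil-restriction torus, where the paper instead writes down the cocycle condition and exhibits every cocycle explicitly as a coboundary.
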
 

\begin{proof}
	If $\tau$ is the usual complex conjugation, and $\mathbf{H}$ is one of the groups $\mathrm{SL}_3(\CC)$, $\mathrm{GL}_2(\CC)$ or $(\CC^*)^2$ of type $(a)$ of \Cref{table:stabilizers}, then the action of $\tau$ on $\mathbf{H}$ is the usual complex conjugation. Hence, by \cite[X.1, corollay of Prop. 3]{serre_cohomologie_1994d}, $H^1_\tau(\galCR ,\mathbf{H})$ is trivial.
	If $\mathbf{H}$ is the group $(\CC^*)^2$ of type $(b)$,
	then $\tau$ acts by $\tau(u,v) = (\bar{v},\bar{u})$, so the set of cocycles is the set of points of the form $(\alpha , \bar{\alpha}^{-1})$. Since $(\alpha , \bar{\alpha}^{-1}) = (\alpha^{-1} , 1)^{-1} \tau(\alpha^{-1} , 1)$, we have that $H^1_\tau(\galCR ,\mathbf{H} )$ is trivial.
	
	We deal now with the involutions $\tau_1$ and $\tau_2$. Let $J \in \mathrm{GL}_3(\CC)$ be a diagonal matrix with entries equal to $\pm 1$.
	Let $\tau : M \mapsto J {}^t\! \bar{M}^{-1} J$ be an antiholomorphic involution, so $\tau = \tau_1$ if $J = \mathrm{Id}$ and $\tau = \tau_2$ if $J = I_{2,1}$. We have four cases, depending on $\mathbf{H}$.
	
	\emph{First case: $\mathbf{H} = (\CC^*)^2$ of type $(a)$:}
	A straightfoward computation gives that $\tau(u,v) = (\bar{u}^{-1} , \bar{v}^{-1})$, so the set of cocycles is $(\RR^*)^2$. Moreover, $(r,s) \sim (r',s')$ if and only if $rr' > 0$ and $ss' >0$. Thus, $H^1_\tau(\galCR ,\mathbf{H} ) \simeq \ZZ/2\ZZ \times \ZZ/2\ZZ$. Observe that, since $(\CC^*)^2$ is Abelian, we have a cohomology \emph{group}.
	
	\emph{Second case: $\mathbf{H} = (\CC^*)^2$ of type $(b)$:}
	A direct computation gives $\tau(u,v) = (\bar{u} \bar{v} , \bar{v}^{-1})$, so the set of cocycles is the set of points of the form $(\alpha , \abs{\alpha}^{-2})$. Since $(\alpha , \abs{\alpha}^{-2}) = (1 , \bar{\alpha} )^{-1} \tau(1 , \bar{\alpha} )$, we have that $H^1_\tau(\galCR ,\mathbf{H} )$ is trivial.
	
	\emph{Third case: $\mathbf{H} = \mathrm{SL}_3(\CC)$:}
	In this case, a cocycle is given by matrix $M \in \mathrm{SL}_3(\CC)$ such that $M \tau(M) = \mathrm{Id}$, i.e.\ such that
	$MJ$ is Hermitian.
	Observe that since $\det(M) = 1$, the signature of $MJ$ can only be $(0,3)$ or $(2,1)$.
	Two such matrices $M$ and $M'$ are equivalent if and only if there is $N \in \mathrm{SL}_3(\CC)$ such that
	$N(MJ) {}^t\!\bar{N} =  M'J$, which is equivalent to say that the non-degenerate Hermitian forms $MJ$ and $M'J$ are equivalent. Since Hermitian forms are classified by their signature, there are exactly $2$ equivalence classes in $H^1_\tau(\galCR ,\mathbf{H} )$.
	
	\emph{Fourth case: $\mathbf{H} = \mathrm{GL}_2(\CC)$:} Let $J'$ be the upper left $2 \times 2$ block of $J$. The restriction to this blocks gives an action of $\tau$ on $\mathrm{GL}_2(\CC)$ by $M \mapsto J'{}^t\! \bar{M}^{-1} J'$. By an analogous computation as the one of the third case, there is a one-to-one correspondence between $H^1_\tau(\galCR ,\mathbf{H} )$ and the signatures of non-degenerate $2 \times 2$ Hermitian matrices. Thus, $\abs{H^1_\tau(\galCR ,\mathbf{H} )} = 3$.
	
\end{proof}

\begin{table}
	\[
	\begin{array}{ccc}
	\toprule
		\text{Involution } \tau & \text{Subgroup } \mathbf{H} \subset \mathrm{SL}_3(\CC) & \abs{H^1_\tau(\galCR ,\mathbf{H} )} \\
	\midrule
		 & (\CC^*)^2 \text{ of type } (a) & 1 \\
	\multirow{2}{*}{$M \mapsto \bar{M}$}	 & (\CC^*)^2 \text{ of type } (b) & 1 \\
		 & \mathrm{GL}_2(\CC) & 1 \\
		 & \mathrm{SL}_3(\CC) & 1 \\
	\midrule
		 & (\CC^*)^2 \text{ of type } (a) & 4 \\
		\tau_1 \text{ or } \tau_2 & \mathrm{GL}_2(\CC) & 3 \\
		 & \mathrm{SL}_3(\CC) & 2 \\
	\midrule 
		 \tau_2 & (\CC^*)^2 \text{ of type } (b) & 1 \\
	\bottomrule
	\end{array}
	\]
	\caption{Cardinality of the first cohomology sets of stabilizers of diagonal matrices in $\mathrm{SL}_3(\CC)$.}\label{table:cardinality_H1}
\end{table}

\subsection{Real points for SL(3,R)}
 We deal first with the real form $\mathrm{SL}_3(\RR)$, and the usual complex conjugation. In our general notation, we have
 $X = \Hom(\ZZ, \mathrm{SL}_3(\RR)) \simeq \mathrm{SL}_3(\RR)$ and $G = \mathrm{SL}_3(\RR)$, 
 while $\mathbf{X} = \Hom(\ZZ, \mathrm{SL}_3(\CC)) \simeq \mathrm{SL}_3(\CC)$ and $\mathbf{G} = \mathrm{SL}_3(\CC)$.
 We identify $\mathbf{X \sslash G}$ with $\CC^2$, using the traces of a matrix and its inverse.
 Hence, the map $\pi \vert_X$ is given by 
 \[
 \pi\vert_X: 
 \begin{aligned}
 \Hom(\ZZ, \mathrm{SL}_3(\RR)) &\to (\mathbf{X \sslash G})(\RR)\simeq \RR^2 \\
 M &\mapsto (\tr(M), \tr(M^{-1}))
 \end{aligned}
  \]
 Recall that the projection $p: \Hom(\ZZ, \mathrm{SL}_3(\RR)) \to \Hom(\ZZ, \mathrm{SL}_3(\RR)) \sslash \mathrm{SL}_3(\RR)$ is surjective, and that we have a natural map $\beta : X \sslash G \to (\mathbf{X \sslash G})(\RR)$ satisfying \[\pi \vert_X = \beta \circ p.\]
 \begin{prop}
 	The map $\beta : \Hom(\ZZ, \mathrm{SL}_3(\RR)) \sslash \mathrm{SL}_3(\RR) \to \RR^2$ is a homeomorphism.
 \end{prop}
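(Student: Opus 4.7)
The plan is to show that $\beta$ is continuous, proper, surjective, and injective; then a continuous proper bijection between locally compact Hausdorff spaces is automatically a homeomorphism (a proper map is closed, so its set-theoretic inverse is continuous). Continuity and properness of $\beta$ are already granted by the Richardson--Slodowy theory recalled in \Cref{sect:background_and_setting}.

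For surjectivity, I would take $(a,b)\in\RR^2$ and consider the real polynomial
\[
P(X)=X^3-aX^2+bX-1.
\]
Since $P$ has real coefficients, either all three roots lie in $\RR$, or one is real and the other two are complex conjugates. In the first case a real diagonal matrix with these eigenvalues provides an element of $\mathrm{SL}_3(\RR)$ with the prescribed pair of traces. In the second case I would build a block-diagonal element $\mathrm{diag}(\lambda, R)$ with $\lambda$ the real root and $R\in\mathrm{GL}_2(\RR)$ a real matrix with the conjugate pair of eigenvalues (for instance the companion matrix of $(X-\mu)(X-\bar\mu)$). In both cases the matrix is $\CC$-diagonalisable, hence semisimple, hence has closed $\mathbf{G}$-orbit; by \Cref{lemma:corollary_birkes_closed_orbits} combined with the fact that a $G$-orbit of a semisimple element of $G$ is closed in $G$, it also has closed $G$-orbit, so it defines a point of $X\sslash G$ mapping to $(a,b)$.

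For injectivity I would invoke \Cref{prop:bound_number_of_lifts}: for any $(a,b)\in\RR^2$ with lift $\tilde x\in\mathbf{X}$ (which is necessarily $\CC$-diagonalisable), the stabiliser $\mathbf{G}_{\tilde x}$ is one of the three groups listed in \Cref{table:stabilizers}. In all three cases, \Cref{lemma:computation_h1} (applied to $\tau=$ usual complex conjugation, see the first block of \Cref{table:cardinality_H1}) gives $\lvert H^1(\galCR,\mathbf{G}_{\tilde x})\rvert=1$. A fortiori the kernel of $H^1(\galCR,\mathbf{G}_{\tilde x})\to H^1(\galCR,\mathbf{G})$ is trivial, so $\beta^{-1}(a,b)$ has at most one element. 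Combined with the surjectivity step, this shows $\beta$ is a bijection.

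The main subtlety I anticipate is the surjectivity argument: one must be careful to produce a representative whose $G$-orbit is actually closed, not merely a matrix with the right characteristic polynomial (the companion matrix of $P$ itself can fail to be semisimple if $P$ has a repeated root). The block-diagonal construction above handles this cleanly by building in diagonalisability by hand. Once surjectivity and injectivity are in place, the concluding topological argument (continuous proper bijection $\Rightarrow$ homeomorphism) is immediate.
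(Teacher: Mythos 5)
Your proposal is correct and follows essentially the same route as the paper: continuity and properness from Richardson--Slodowy reduce the problem to bijectivity, surjectivity comes from exhibiting a real matrix with prescribed $(\tr M,\tr M^{-1})$, and injectivity comes from the triviality of $H^1(\galCR,\mathbf{G}_{\tilde x})$ for the stabilizers in \Cref{table:stabilizers} under the usual complex conjugation. The only difference is in the surjectivity step, where the paper simply uses the companion matrix of $X^3-rX^2+sX-1$ without worrying about semisimplicity: this is legitimate because $p\colon X\to X\sslash G$ is defined on all of $X$ (sending a point to the unique closed orbit in its orbit closure) and $\beta\circ p=\pi\vert_X$, so your more careful block-diagonal construction, while perfectly valid, is not actually needed.
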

 \begin{proof}
 	Since $\beta$ is continuous and proper, and $\RR^2$ is locally compact and Hausdorff, we only need to prove that $\beta$ is bijective. 
 	
 	First, we prove the surjectivity of $\beta$.
 	Let $(r,s) \in \RR^2$, and consider the companion matrix $M =
 	\begin{psmallmatrix}
 	0 & 0 & 1 \\
 	1 & 0 & -s \\
 	0 & 1 & r
 	\end{psmallmatrix}
 	\in \mathrm{SL}_3(\RR)$.
 	The characteristic polynomial of $M$ is $X^3 - rX^2 + sX -1$, so $\tr(M) = r$ and $\tr(M^{-1}) = s$. Hence, $\pi(M) = (r,s)$  and 
 	$\pi \vert_X$ is surjective. Since $\pi \vert_X = \beta \circ p$, we obtain the surjectivity of $\beta$.
 	
 	Now, we prove that $\beta$ is injective. Let $(r,s) \in \RR^2$. 
 	We constructed, in \Cref{subsect:number_of_lifts}, an injective map $\varphi$ from $\beta^{-1}((r,s))$ to $H^1(\galCR, \mathbf{G}_M)$, where $\mathbf{G}_M$ is the stabilizer in $\mathrm{SL}_3(\CC)$ of a diagonalizable matrix $M \in \pi^{-1}((r,s))$.
	The possible groups $\mathbf{G}_M$ are listed in \Cref{table:stabilizers}, and are isomorphic to $(\CC^*)^2$, $\mathrm{GL}_2(\CC)$ or $\mathrm{SL}_3(\CC)$. Note that the induced complex conjugation on these subgroups is the usual complex conjugation.
	The corresponding $H^1(\galCR , \mathbf{H})$, computed in \Cref{table:cardinality_H1} and \Cref{lemma:computation_h1}, are trivial.
	Hence, in all the cases, the injective map $\varphi$ has a singleton as target. Thus, all the fibers of $\beta$ are singletons, i.e.\ $\beta$ is injective.
 \end{proof}

\subsection{Real points for SU(3)}
 We consider here the real form $G = \mathrm{SU}(3)$, acting on $X = \mathrm{SU}(3)$ by conjugation. The corresponding complex conjugation is $\tau_1$.
 By \Cref{prop:compact_real_form}, we know that the map $\beta : X\sslash G \to \CC$ is a homeomorphism onto its image.
 Observe that the eigenvalues of a matrix in $\mathrm{SU}(3)$ have modulus $1$, and that given $a,b,c \in \CC^*$ of modulus $1$ and such that $abc = 1$, the diagonal matrix with entries $(a,b,c)$ is unitary.
 Therefore, the image of $\beta$ is precisely the set of $z \in \CC$ such that 
 $X^3 -zX^2 + \bar{z}X - 1$ has all its roots of modulus $1$. By considering the resultant of this polynomial and its derivative we obtain the following function defined by Goldman in \cite[Section 6.2.3]{goldman}
 \begin{equation}\label{eq:goldman_function}
 	f(z) = \abs{z}^4 - 8\Re(z^3) + 18\abs{z}^2 - 27. \tag{$*$}
 \end{equation}
 Hence, the image of $\beta$ is the set $f^{-1}(\RR^-)$, which is the inner part of the curve of \Cref{fig:curve_goldman}.
 Therefore, the $\mathrm{SU}(3)$-character variety for $\ZZ$ is homeomorphic to a closed triangle.
 
 \begin{figure}[htbp]
 	\includegraphics[width=4cm]{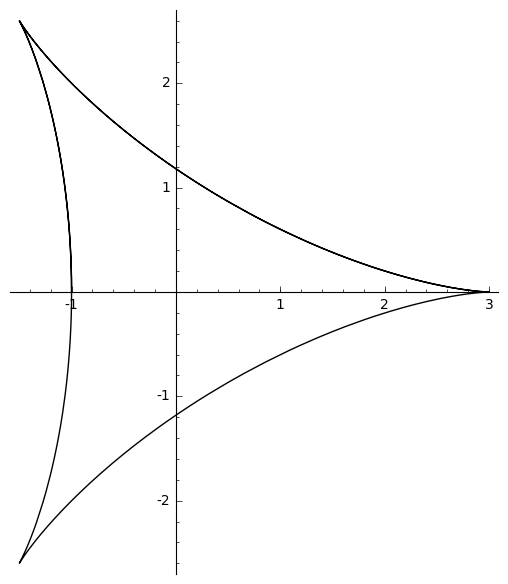}
 	\caption{The zero level set of $f$.} \label{fig:curve_goldman}
 \end{figure}
 
 We make a last observation about the map $\varphi$ defined in \Cref{subsect:number_of_lifts}, that is an injective map from the fibers of $\beta$ to the first cohomology set of the corresponding stabilizer. Since here $\beta$ is bijective, the image of $\varphi$ is always a singleton. However, the possible stabilizers are given in \Cref{table:stabilizers}, and the corresponding cohomology sets, given in \Cref{table:cardinality_H1} have cardinal $2$, $3$ or $4$. Thus, the map $\varphi$ is never surjective in this case.

 \subsection{Real points for SU(2,1)}
  We consider now the action by conjugation of $G = \mathrm{SU}(2,1)$ on the space $X = \Hom(\ZZ, \mathrm{SU}(2,1)) \simeq \mathrm{SU}(2,1)$.
  The group $\mathrm{SU}(2,1)$ acts by isometries on the complex hyperbolic space $\HH^2_\CC$, so its elements can be classified by their dynamics. In order to understand the different quotients of $\Hom(\ZZ, \mathrm{SU}(2,1))$ by conjugation, we begin by describing the elements of $\mathrm{SU}(2,1)$.
 
  \subsubsection*{Elements of SU(2,1) up to conjugation}
  The elements of $\mathrm{SU}(2,1)$ can be classified, up to conjugation in $\mathrm{SU}(2,1)$, by their dynamics on the complex hyperbolic plane. See for example Chapter 6 of Goldman's book \cite{goldman} or the thesis of Genzmer \cite{genzmer}. We will give a representative of each conjugacy class.
  For clearness on the representatives, we will use two different Hermitian forms $\Psi$. They are given by the matrices
  \begin{align*}
  H_1 &= 
  \begin{psmallmatrix}
  1 & 0 & 0  \\
  0 & 1 & 0  \\
  0 & 0 & -1
  \end{psmallmatrix}
  &
  H_2
  &=
  \begin{psmallmatrix}
  0 & 0 & 1 \\
  0 & 1 & 0 \\
  1 & 0 & 0
  \end{psmallmatrix}
  \end{align*}
  
  \paragraph{Elliptic elements}
  If an element $U \in \mathrm{SU}(2,1)$ has an eigenvector $v$ such that $\Psi(v) < 0$, $U$ is called \emph{elliptic}. When considering the Hermitian form $H_1$, it is conjugated in $\mathrm{SU}(2,1)$ to a matrix of the form
  \[
  E_{(a,b,c)}
  =
  \begin{psmallmatrix}
  e^{ia} & 0          & 0           \\
  0           & e^{ib} & 0           \\
  0           & 0          & e^{ic}
  \end{psmallmatrix}
  \]
  where $a,b,c \in \RR$ and $a+b+c = 0$. If $a$, $b$ and $c$ are considered modulo $2\pi$,
  two such matrices $E_{(a,b,c)}$ and $E_{(\alpha',\beta',\gamma')}$ are conjugated in $\mathrm{SL}_{3}(\CC)$ if and only if $(a',b',c')$ is a permutation of $(a,b,c)$.
  However, they are conjugated $\mathrm{SU}(2,1)$ if and only if $c = c'$ and $(a',b') = (a,b)$ or $(b,a)$. The nontrivial implication in this last claim is given by the following lemma.
  
  \begin{lemme}\label{lemma:negative_eigenvalue_continuous}
  	Let $\tilde{\mathcal{E}} \subset \mathrm{SU}(2,1)$ be the set of elliptic elements. For $U \in \tilde{\mathcal{E}}$, let $\mathrm{eig}^-(U)$ be the eigenvalue of $U$ for an eigenvector $v$ such that $\Psi(v)<0$. Then $\mathrm{eig}^- : \tilde{\mathcal{E}} \to S^1$ is a well defined continuous function that is invariant by $\mathrm{SU}(2,1)$-conjugation.
  \end{lemme}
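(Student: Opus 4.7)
The three claims split cleanly. \emph{Invariance} is immediate: if $U' = PUP^{-1}$ with $P \in \mathrm{SU}(2,1)$ and $Uv = \lambda v$ with $\Psi(v) < 0$, then $U'(Pv) = \lambda (Pv)$ and $\Psi(Pv) = \Psi(v) < 0$, so $\mathrm{eig}^-(U') = \lambda = \mathrm{eig}^-(U)$.

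For \emph{well-definedness}, I first note that any $U \in \tilde{\mathcal{E}}$ is diagonalizable: a negative eigenvector $v$ generates a $\Psi$-negative line, its $\Psi$-orthogonal complement $v^\perp$ is $U$-stable (because $U$ preserves $\Psi$) and positive definite of signature $(2,0)$, so $U|_{v^\perp} \in \mathrm{U}(2)$ is diagonalizable. A standard argument using $|\lambda| = 1$ for eigenvalues of $U$ shows that distinct eigenspaces of $U$ are $\Psi$-orthogonal, and then non-degeneracy of $\Psi$ on $\mathbb{C}^3$ forces $\Psi|_{V_\mu}$ to be non-degenerate on each eigenspace $V_\mu$ of $U$. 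Suppose two eigenvectors $v_1 \in V_{\lambda_1}$ and $v_2 \in V_{\lambda_2}$ both satisfy $\Psi(v_i) < 0$. If $\lambda_1 \neq \lambda_2$, then $v_1 \perp_\Psi v_2$, so $\Psi$ restricted to $\mathrm{span}(v_1,v_2)$ would be negative definite of rank $2$, contradicting the signature $(2,1)$. Hence $\lambda_1 = \lambda_2$, and $\mathrm{eig}^-(U)$ is well defined.

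For \emph{continuity}, let $U_n \to U_0$ in $\tilde{\mathcal{E}}$. Set $\lambda_n = \mathrm{eig}^-(U_n)$ and pick a corresponding eigenvector $v_n \in \mathbb{C}^3$ with Euclidean norm $1$ and $\Psi(v_n) < 0$. By compactness of the Euclidean unit sphere and of $S^1$, extract a subsequence (still indexed by $n$) with $v_n \to v$ of norm $1$ and $\lambda_n \to \lambda \in S^1$. Passing to the limit in $U_n v_n = \lambda_n v_n$ yields $U_0 v = \lambda v$, and continuity of $\Psi$ gives $\Psi(v) \leq 0$. If $\Psi(v) < 0$, well-definedness of $\mathrm{eig}^-$ at $U_0$ forces $\lambda = \mathrm{eig}^-(U_0)$. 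Since this limit is the same for every convergent subsequence, $\lambda_n \to \mathrm{eig}^-(U_0)$.

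The \emph{main obstacle} is the degenerate case $\Psi(v) = 0$: the limit eigenvector could be isotropic. I handle it using the non-degeneracy of $\Psi$ on each eigenspace $V_\lambda(U_0)$ established in the second paragraph. Since $v \in V_\lambda(U_0)$ is a non-zero isotropic vector and $\Psi|_{V_\lambda(U_0)}$ is non-degenerate, the signature of $\Psi|_{V_\lambda(U_0)}$ cannot be definite, so $V_\lambda(U_0)$ must be at least $2$-dimensional with indefinite restriction, hence contains vectors of strictly negative $\Psi$-value. By the well-definedness already proved for $U_0$, such vectors force $\lambda = \mathrm{eig}^-(U_0)$, closing the argument.
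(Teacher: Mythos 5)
Your proposal is correct and follows essentially the same route as the paper: conjugation-invariance is immediate, well-definedness comes from the uniqueness of the eigenvalue on non-positive eigenvectors, and continuity is the same compactness/subsequence argument on unit eigenvectors. The only difference is one of detail: the paper simply asserts (as an ``observe that'') that two eigenvectors $v_1,v_2$ with $\Psi(v_i)\leq 0$ share the same eigenvalue, which already covers the isotropic limit vector, whereas you prove the strict-sign case and then dispose of the degenerate case $\Psi(v)=0$ separately via non-degeneracy of $\Psi$ on eigenspaces --- a correct and welcome filling-in of the step the paper leaves implicit.
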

  \begin{proof}
  	First, observe that if $U \in \tilde{\mathcal{E}}$ and $v_1 , v_2 \in \CC^3$ are eigenvectors such that $\Psi(v_1) \leq 0$ and $\Psi(v_2) \leq 0$, then they have the same eigenvalue, which has modulus $1$.
  	%
  	Thus, the function $\mathrm{eig}^- : \tilde{\mathcal{E}} \to S^1$ is well defined. Since $\mathrm{SU}(2,1)$-conjugation preserves the eigenvalues and the sign of $\Psi$ on eigenvectors, the function is invariant by conjugation. It only remains to prove that it is continuous.
  		
  	We reason by contradiction. Suppose that $(U_n)_{n\in \NN}$ is a sequence of $\tilde{\mathcal{E}}$ converging to $U \in \tilde{\mathcal{E}}$ such that $\mathrm{eig}^-(U_n) \not\to \mathrm{eig}^-(U)$. Consider eigenvectors $v_n$ for $U_n$ in the unit sphere of $\CC^3$ such that $\Psi(v_n)<0$. 
  	Since $S^1$ and the unit sphere of $\CC^3$ are compact, maybe after taking a subsequence we can suppose that $v_n \to v \in \CC^3$ and that $\mathrm{eig}^-(U_n) \to \lambda \neq \mathrm{eig}^-(U)$. Furthermore, we have $\Psi(v) \leq 0$. But since $U_nv_n = \mathrm{eig}^-(U_n) v_n$, we obtain that $Uv = \lambda v$. Since $U \in \tilde{\mathcal{E}}$, we obtain that $\lambda = \mathrm{eig}^-(U)$, which is a contradiction.
  \end{proof}
  		
 \paragraph*{Loxodromic and parabolic elements}
  If one of the eigenvalues of $U$ has modulus $\neq 1$, then, when considering the Hermitian form $H_2$, $U$ is called \emph{loxodromic} and is conjugated in $\mathrm{SU}(2,1)$ to a matrix of the form
	\[
	L_\lambda =
	\begin{psmallmatrix}
	\lambda & 0                        & 0                     \\
	0       & \conjug{\lambda}/\lambda & 0                     \\
	0       & 0                        & \conjug{\lambda}^{-1}
	\end{psmallmatrix}
	\]
	where $\lambda \in \CC$ has modulus $<1$. If $\abs{\lambda} = 1$, we still have a unitary element for the Hermitian form $H_2$ with a double eigenvalue, that is elliptic.
	If $U$ is not elliptic nor loxodromic, it is called \emph{parabolic}, and is conjugated in $\mathrm{SU}(2,1)$ to
	\[
	P_{\alpha,z,t} =
	e^{i\alpha}
	\begin{psmallmatrix}
	1 & -e^{-3i\alpha}\conjug{z} & -\frac{1}{2}(\abs{z}^2 + it) \\
	0 & e^{-3i\alpha}            & z                            \\
	0 & 0                        & 1
	\end{psmallmatrix}
	\]
	where $\alpha,t \in \RR$ and $z \in \CC$.
  		
  \subsubsection*{Real points of the complex GIT quotient}
   Recall, from \Cref{subsect:real_points_sl3c_char_var}, that the character variety $\mathfrak{X}_{\mathrm{SL}(3,\CC)}(\ZZ)$ is isomorphic to $\CC^2$. The fixed points for the involution induced by $\mathrm{SU}(2,1)$ are the points of the form $(z,\bar{z})$, that we identify with $\CC$ using the first projection. Then, the corresponding projection map $\pi : \mathrm{SU}(2,1) \to \CC$ is precisely the trace. By considering the representatives of elliptic and loxodromic elements, we obtain that $\tr : \mathrm{SU}(2,1) \to \CC$ is surjective.
   Thus, $\mathfrak{X}_{\mathrm{SU}(2,1)}(\ZZ) \simeq \CC$, and the map
   $\beta: X \sslash G \to \CC$ is surjective.
   
  \subsubsection*{The real GIT quotient}
	We study now the real GIT quotient $X \sslash G$. 
	Observe that the closed orbits for the $\mathrm{SU}(2,1)$-action, which correspond to diagonalizable elements, are precisely the ones of elliptic and loxodromic elements.
	Let $\mathcal{L}$ and $\mathcal{E}$ be the subsets of $X \sslash G$ obtained by projecting the loxodromic and elliptic elements of $\mathrm{SU}(2,1)$ respectively. Thus, we have a disjoint union $X \sslash G = \mathcal{L} \cup \mathcal{E}$.
	We study now the topology of $\mathcal{L}$ and $\mathcal{E}$.

  	\begin{prop}
  		We have $\mathcal{L} \simeq \{\lambda \in \CC^* \mid \abs{\lambda} < 1 \}$.
  	\end{prop}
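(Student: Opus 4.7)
The plan is to construct an explicit continuous bijection
\[
\phi \colon \mathcal{L} \to \{\lambda \in \CC^* \mid \abs{\lambda}<1\}
\]
by sending the $\mathrm{SU}(2,1)$-orbit of a loxodromic element $U$ to its unique eigenvalue of modulus strictly less than $1$, and then to show that $\phi$ is a homeomorphism by exhibiting a continuous inverse.

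First I would check that $\phi$ is well defined. By the classification recalled above, every loxodromic $U \in \mathrm{SU}(2,1)$ is $\mathrm{SU}(2,1)$-conjugate to some $L_\lambda$ with $\abs{\lambda}<1$, whose three eigenvalues $\lambda,\ \conjug{\lambda}/\lambda,\ \conjug{\lambda}^{-1}$ have moduli $\abs{\lambda}<1$, $1$ and $\abs{\lambda}^{-1}>1$ respectively. Thus $U$ has a unique eigenvalue of modulus strictly less than $1$, it is non-zero since $\det U = 1$, and it is conjugation-invariant, so it descends to a well defined function on $\mathcal{L}$ valued in the punctured open unit disk.

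Next I would verify bijectivity. Surjectivity is immediate from $\phi([L_\lambda]) = \lambda$ for every $\lambda$ with $0<\abs{\lambda}<1$. For injectivity, if $\phi([U])=\phi([U'])=\lambda$, then by the classification both $U$ and $U'$ are $\mathrm{SU}(2,1)$-conjugate to $L_\lambda$, hence to one another, so $[U]=[U']$ in $\mathcal{L}$.

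Finally I would show that $\phi$ is a homeomorphism. The set of loxodromic elements is open in $\mathrm{SU}(2,1)$ (the condition that some eigenvalue lies off the unit circle is open, since the moduli of the roots of the characteristic polynomial depend continuously on its coefficients). On this open stratum the "small" eigenvalue is strictly separated in modulus from the other two, so continuity of the roots of a monic polynomial yields that it depends continuously on the matrix; being $\mathrm{SU}(2,1)$-invariant it descends to a continuous map on $\mathcal{L}$ which is $\phi$. For the inverse, the map $\lambda \mapsto L_\lambda$ is polynomial from $\{\abs{\lambda}<1,\ \lambda\neq 0\}$ to $\mathrm{SU}(2,1)$, and composing with the quotient map $\mathrm{SU}(2,1)\to X\sslash G$ gives a continuous inverse to $\phi$. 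The main technical point I expect is justifying the continuous selection of the "small" eigenvalue out of the unordered triple of roots, but the uniform gap $\abs{\lambda}<1\le\abs{\conjug{\lambda}/\lambda},\abs{\conjug{\lambda}^{-1}}$ valid throughout the loxodromic stratum makes this selection locally well defined and continuous.
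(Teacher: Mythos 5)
Your proposal is correct and rests on the same fact as the paper's one-line proof, namely that the classification of loxodromic elements makes $\{L_\lambda \mid 0<\abs{\lambda}<1\}$ an exact set of conjugacy-class representatives; you simply spell out the topological verification (continuity of the small eigenvalue and of $\lambda \mapsto p(L_\lambda)$) that the paper leaves implicit. The only nitpick is that $\lambda \mapsto L_\lambda$ is not polynomial (it involves $\conjug{\lambda}$ and $\lambda^{-1}$), but it is continuous on the punctured disk, which is all you need.
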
	
  \begin{proof}
  	By the description given above, the set $\{ L_\lambda \mid \lambda \in \CC^*, \,  |\lambda|<1 \}$ contains exactly one representative of each $\mathrm{SU}(2,1)$ conjugacy class.
  \end{proof}				 
				
  \begin{prop}
  	The space $\mathcal{E}$ is homeomorphic to a projective plane minus an open disk.
  \end{prop}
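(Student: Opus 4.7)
The plan is to identify $\mathcal{E}$ with a quotient of a $2$-torus by a reflection involution, and then recognize this quotient as a M\"obius band.

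First, using the description of elliptic conjugacy classes recalled above, every elliptic element of $\mathrm{SU}(2,1)$ is conjugate to some $E_{(a,b,c)}$ with $a + b + c \equiv 0 \pmod{2\pi}$, and two such matrices $E_{(a,b,c)}$ and $E_{(a',b',c')}$ are $\mathrm{SU}(2,1)$-conjugate iff $c \equiv c' \pmod{2\pi}$ and $(a', b') \equiv (a, b)$ or $(b, a) \pmod{2\pi}$; the fact that $c$ specifically (rather than any permutation of $\{a,b,c\}$) is the continuous $\mathrm{SU}(2,1)$-invariant is the content of \Cref{lemma:negative_eigenvalue_continuous}. Since $c$ is determined by $(a, b)$ via $c \equiv -a - b$, the assignment $(a, b) \mapsto [E_{(a, b, -a-b)}]$ descends to a continuous bijection $\Psi \colon T^2/\sigma \to \mathcal{E}$, where $T^2 = (\RR/2\pi\ZZ)^2$ parametrizes $(a, b)$ and $\sigma(a, b) = (b, a)$. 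Since $T^2/\sigma$ is compact and $\mathcal{E}$ is Hausdorff (as a subspace of $X \sslash G$), $\Psi$ is a homeomorphism.

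Second, I would show that $T^2/\sigma$ is a M\"obius band. The differential of $\sigma$ at any point of its fixed set $\Delta = \{(a, a) : a \in \RR/2\pi\ZZ\}$ is the linear swap, whose eigenvalues are $\pm 1$, so $\sigma$ acts locally as a reflection across $\Delta$. Hence $\Delta$ is a single embedded circle and $T^2/\sigma$ is a compact surface whose boundary is the image of $\Delta$, still a single circle. The formula $\chi(T^2) = 2\chi(T^2/\sigma) - \chi(\Delta)$ yields $\chi(T^2/\sigma) = 0$. By the classification of compact connected surfaces with boundary, a surface with $\chi = 0$ and exactly one boundary circle must be the M\"obius band (the annulus, the only other candidate, has two boundary circles). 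The conclusion then follows since the M\"obius band is homeomorphic to $\RR P^2$ with an open disk removed.

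I expect the main subtle point to be the identification $\mathcal{E} \cong T^2/\sigma$: it is crucial that \Cref{lemma:negative_eigenvalue_continuous} singles out $c$ as a continuous $\mathrm{SU}(2,1)$-conjugacy invariant, so the equivalence relation on $T^2$ induced by $\mathrm{SU}(2,1)$-conjugacy of the corresponding elliptic matrices is exactly $\sigma$, and not a larger permutation of the eigenvalues (which would give a different quotient, e.g.\ a disk). Once this is established, the rest reduces to standard surface topology.
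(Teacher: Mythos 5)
Your proof is correct, and its first half is exactly the paper's argument: you parametrize the elliptic conjugacy classes by $(a,b)$ in a torus modulo the swap $(a,b)\mapsto(b,a)$, with injectivity of the parametrization resting, as in the paper, on \Cref{lemma:negative_eigenvalue_continuous} to rule out the other permutations of the eigenvalues. Where you diverge is the final recognition step: the paper identifies the quotient of the square $[0,2\pi]^2$ under $(a,b)\sim(b,a)$, $(0,b)\sim(2\pi,b)$, $(a,0)\sim(a,2\pi)$ as $\RR\mathbb{P}^2$ minus an open disk by an explicit cut-and-paste argument illustrated in a figure, whereas you invoke the structure of $T^2/\sigma$ as a compact surface with boundary (using that $\sigma$ is a reflection along its fixed circle), compute $\chi(T^2/\sigma)=0$ from the double-cover formula, and appeal to the classification of compact surfaces with boundary together with the count of boundary circles. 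Your route is less visual but more self-contained and verifiable; the paper's is shorter but leans on the picture. One small point worth making explicit in your write-up is the justification that the continuous bijection $\Psi$ is well defined and surjective onto $\mathcal{E}$, which uses that all elliptic elements are diagonalizable (hence have closed $\mathrm{SU}(2,1)$-orbits), so that $\mathcal{E}$ really is the set of elliptic conjugacy classes; the paper records this at the start of its proof.
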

  \begin{proof}
  	Since all elliptic elements are diagonalizable, we know that $\mathcal{E}$ is precisely the space of $\mathrm{SU}(2,1)$-orbits of elliptic elements. Recall that we gave an explicit representative of the form $E_{(a,b,c)}$ of each conjugacy class.
  	We parametrize the elliptic elements $E_{(a,b,c)}$ by $(a,b) \in [0,2 \pi ]^2$, so there is at least an element in each conjugacy class. In order to obtain $\mathcal{E}$, we need to do the following identifications:
  	\begin{align*}
  	(a,b) \sim (b,a) &  & (0,b) \sim (2\pi,b) &  & (a,0) \sim (a, 2\pi)
  	\end{align*}
  	We obtain a real projective plane minus a disk, as in \Cref{fig:espace_he}.
  	\end{proof}
  	\begin{figure}
  		\centering
  		\includegraphics[width = 4cm]{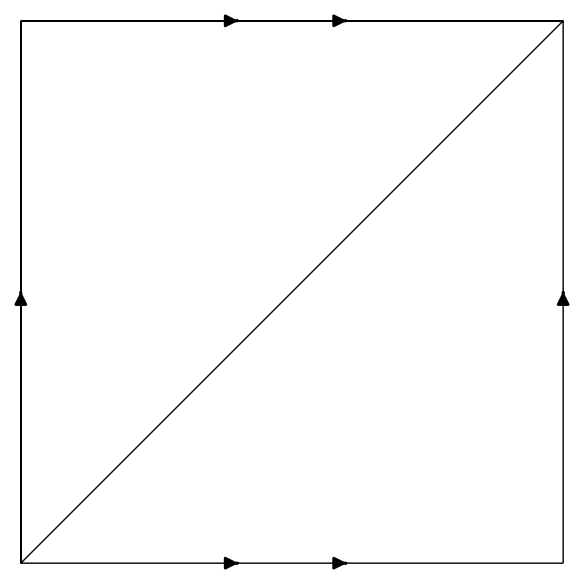}
  		\hspace{2cm}
  		\includegraphics[width = 4cm]{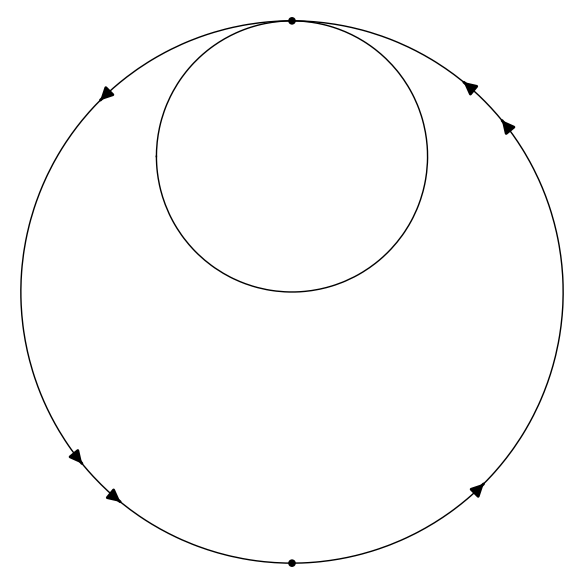}
  		\caption{The space $\mathcal{E}$.}\label{fig:espace_he}
  	\end{figure}

	In oder to complete the description of $X \sslash G = \mathcal{L} \cup \mathcal{E}$, it only remains to understand the gluing of $\mathcal{L}$ and $\mathcal{E}$ along their boundaries.
	The boundaries of $\mathcal{L}$ and $\mathcal{E}$ correspond to the matrices having a double eigenvalue. In the parametrization of $\mathcal{L}$ by $\{\lambda \in \CC^* \mid \abs{\lambda} < 1 \}$, the boundary is $\{\lambda \in \CC \mid \abs{\lambda} = 1 \}$, and corresponds to elliptic and ellipto-parabolic elements.
	An elliptic element with a double eigenvalue is conjugated to either $E_{(a,a,-2a)}$ (called reflection on a point) or $E_{(a,-2a,a)}$ (called reflection on a line). On the one hand, a reflection on a point cannot be a limit of loxodromic elements, since it has an isolated fixed point $[v] \in \CC\mathbb{P}^2$ such that $\Psi(v)<0$.
	On the other hand, when $\lambda \to e^{ia}$, $L_\lambda$ converges to
	an elliptic element with a double eigenvalue $e^{i a}$, conjugated to $E_{(a,-2a,a)}$.
	Thus, the identification of the boundaries of $\mathcal{L}$ and $\mathcal{E}$ is given by identifying the point of parameter $e^{ia}$ with the orbit of $E_{(a,-2a,a)}$.
	We deduce a full description of the space $X \sslash G$. A picture of the gluing is given in \Cref{fig:gluing_instructions}.
	
	\begin{prop}
		The space $X \sslash G$ is homeomorphic to the gluing of a real projective plane $\RR \mathbb{P}^2$ minus an open disk  together with a punctured disk. The gluing is along a circle in the nontrivial class of $\pi_1(\RR \mathbb{P}^2)$.
		In particular, $X \sslash G$ is not a manifold and has a boundary homeomorphic to $S^1$.
	\end{prop}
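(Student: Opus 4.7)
The plan is to combine the two preceding propositions (describing $\mathcal{L}$ and $\mathcal{E}$ separately) with an explicit analysis of the attaching map, identify its homotopy class, and then extract the topological consequences locally.

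First I would isolate the gluing circle. The closure of $\mathcal{L}$ in $X \sslash G$ adds exactly the orbits $[E_{(a,-2a,a)}]$, giving a continuous map $\phi \colon S^1 \to \mathcal{E}$ with $\phi(e^{ia}) = [E_{(a,-2a,a)}]$. To check that $\phi$ is injective, note that the conjugacy class of an elliptic element is determined by its negative-norm eigenvalue together with the unordered pair of positive-norm eigenvalues; here these are $e^{ia}$ and $\{e^{ia}, e^{-2ia}\}$, so $\phi(e^{ia_1}) = \phi(e^{ia_2})$ already forces $a_1 \equiv a_2 \pmod{2\pi}$. Hence $\gamma := \phi(S^1)$ is an embedded simple closed curve in $\mathcal{E}$.

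The main step is to show that $\gamma$ represents the nontrivial class of $\pi_1(\RR\mathbb{P}^2) = \ZZ/2$. I would use the identification $\mathcal{E} \simeq \mathrm{Sym}^2(S^1) = T^2/s$, where $s(a',b') = (b',a')$; under this model, the core circle of the Möbius band (the generator of $\pi_1(\RR\mathbb{P}^2)$) is the image of the antipodal curve $\{(b,b+\pi) : b \in \RR/2\pi\RR\} \subset T^2$. Solving $\{a,-2a\} = \{b,b+\pi\}$ in $\RR/2\pi\RR$ yields exactly three solutions $a \in \{\pi/3, \pi, 5\pi/3\}$, giving three intersection points of $\gamma$ with the core. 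The lifts of $\gamma$ to $T^2$ have tangent direction $(1,-2)$ or $(-2,1)$, while the lift of the core has direction $(1,1)$, so each intersection is transverse. The $\ZZ/2$-intersection number is $3 \equiv 1 \pmod{2}$, which makes $\gamma$ nontrivial in $H_1(\mathcal{E};\ZZ/2) = \ZZ/2$ and hence in $\pi_1(\RR\mathbb{P}^2)$.

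For the final two assertions, I would analyze the local picture. At a generic point $p \in \gamma$ (with $a \notin \{0, 2\pi/3, 4\pi/3\}$, so $p$ lies in the interior of $\mathcal{E}$), a neighborhood in $\mathcal{E}$ is a $2$-disk containing a local arc of $\gamma$ that cuts it into two half-disks; the collar of $\partial \mathcal{L}$ in the punctured disk $\mathcal{L}$ contributes a further half-disk sheet glued along that same arc. Three half-disks sharing a common arc is not locally Euclidean, so $X \sslash G$ is not a manifold. The manifold-boundary of $X \sslash G$ (points with a half-disk but not full-disk neighborhood) consists precisely of the points of $\partial \mathcal{E}$ away from the three central orbits that $\gamma$ meets, and the closure of this set is the circle $\partial \mathcal{E}$ of reflection-on-a-point orbits. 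The main subtlety is the transversality computation above; the remaining arguments are direct local inspections.
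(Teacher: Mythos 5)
Your proof is correct and follows the same overall route as the paper: take the descriptions of $\mathcal{L}$ and $\mathcal{E}$ from the two preceding propositions, identify the attaching circle $\gamma$ as the set of orbits of the line-reflections $E_{(a,-2a,a)}$, and show that $\gamma$ is essential. Where you genuinely differ is in how essentialness is established: the paper's proof is a two-line appeal to the figure (the curve is drawn in the square model of $\mathcal{E}$ and observed to cut it into three triangles), whereas you give an actual computation, lifting $\gamma$ to the $(1,-2)$- and $(-2,1)$-curves on $T^2$ and pairing with the core circle $\{(b,b+\pi)\}$ to get mod $2$ intersection number $3\equiv 1$; since the mod $2$ intersection form on $H_1(\RR\mathbb{P}^2;\ZZ/2)$ is nondegenerate and the core generates, this does prove nontriviality. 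I verified the count: the three intersection points $a\in\{\pi/3,\pi,5\pi/3\}$ are distinct, transverse, and lie in the interior of $\mathcal{E}$ away from the branch locus and from the three central points where $\gamma$ meets $\partial\mathcal{E}$, so the pairing argument is clean. Your local ``three half-disks along an arc'' analysis for the non-manifold claim is also a point the paper leaves entirely implicit. The one place to phrase carefully is the boundary assertion: the set of genuine manifold-boundary points is $\partial\mathcal{E}$ minus the three central orbits (which are themselves non-manifold points), so ``boundary homeomorphic to $S^1$'' must be read, as you do, via the closure. What your approach buys is a verifiable argument in place of a picture; what the paper's buys is brevity and an explicit triangulated picture of $\mathcal{E}$ that it reuses when discussing the fibers of $\beta$.
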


	\begin{proof}
	 The only remaining point is to identify the points of $\mathcal{E}$ coming from the orbits of $E_{(a,-2a,a)}$. It is a circle which is not homotopically trivial, drawn in \Cref{fig:espace_he_regions}, that cuts $\mathcal{E}$ into three triangles.
	\end{proof}
	
	\begin{figure}
		\centering
		\includegraphics[width = 4cm]{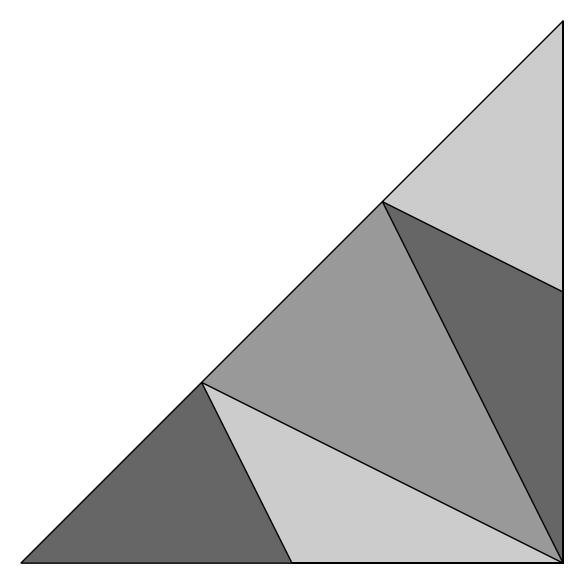}
		\hspace{2cm}
		\includegraphics[width = 4cm]{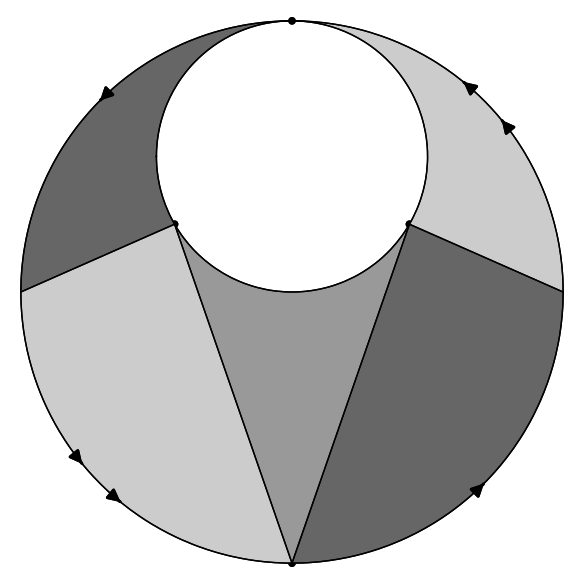}
		\caption{The space $\mathcal{E}$ cut into regions.}\label{fig:espace_he_regions}
	\end{figure}
		
	\begin{figure}[htbp]
		\centering
		\includegraphics[width = 4cm]{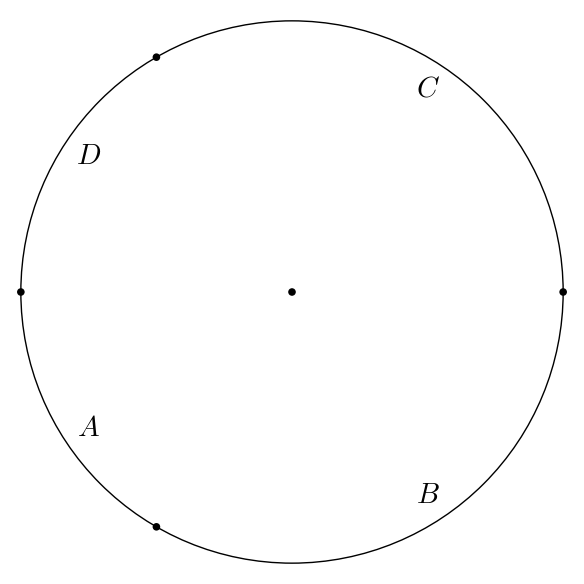}
		\hspace{2cm}
		\includegraphics[width = 4cm]{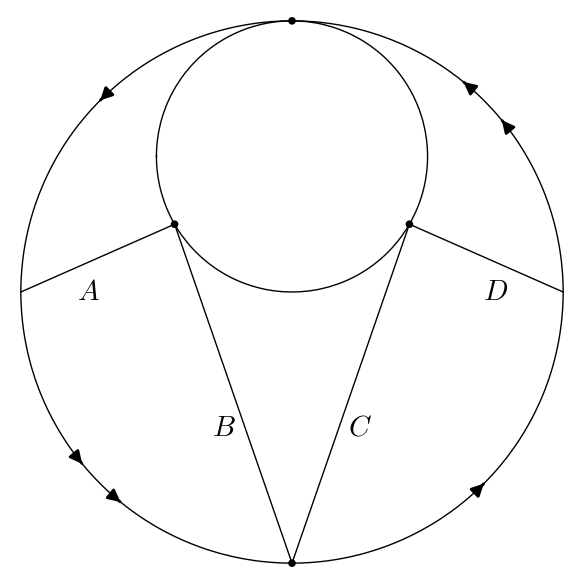}
		\caption{The gluing between $\mathcal{L}$ and $\mathcal{E}$.}\label{fig:gluing_instructions}
	\end{figure}
		
\subsubsection*{The quotient map}
 We consider now the map $\beta : X \sslash G \to \CC$ and the cardinal of its fibers; the behavior of the map and its fibers is quite curious.
 The function $f$ defined in \eqref{eq:goldman_function} is useful to identify the type of an element by its trace.
 \begin{prop}[Theorem 6.2.4 of \cite{goldman}]
 	Let $U \in \mathrm{SU}(2,1)$. Then
 	\begin{itemize}
 		\item The element $U$ is loxodromic if and only if $f(\tr(U)) > 0$.
 		\item If $U$ is parabolic, then $f(\tr(U)) = 0$.
 		\item If $f(\tr(U))<0$ then $f$ is elliptic.
 	\end{itemize}
 \end{prop}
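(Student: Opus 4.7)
The plan is to identify $f(z)$ with (a constant multiple of) the discriminant of the characteristic polynomial of $U$, and then evaluate that discriminant on the three classes of elements using the eigenvalue constraints imposed by the Hermitian signature $(2,1)$.

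First I would check, by a direct expansion using the standard formula for the discriminant of a monic cubic $X^3 + pX^2 + qX + r$, namely $\Delta = p^2q^2 - 4q^3 - 4p^3 r + 18 pqr - 27 r^2$, that for the characteristic polynomial $P_U(X) = X^3 - zX^2 + \bar z X - 1$ (where $z = \tr(U)$, using $\tr(U^{-1}) = \bar z$ for $U \in \mathrm{SU}(2,1)$) one obtains
\[
\Delta = |z|^4 - 4(z^3 + \bar z^3) + 18|z|^2 - 27 = f(z).
\]
Thus $f(\tr U) = \prod_{i<j} (\lambda_i - \lambda_j)^2$, where the $\lambda_i$ are the eigenvalues of $U$. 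In particular $f(\tr U)=0$ as soon as $U$ has a repeated eigenvalue; this immediately gives statement (ii), since every parabolic element is (by the normal form $P_{\alpha,z,t}$) conjugate to an upper-triangular matrix with a repeated diagonal entry.

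Next, I would evaluate $\Delta$ on the normal forms $E_{(a,b,c)}$ and $L_\lambda$ given earlier. For the elliptic normal form, with eigenvalues $e^{ia},e^{ib},e^{ic}$ and $a+b+c\equiv 0 \pmod{2\pi}$, the identity $e^{ix}-e^{iy} = 2i e^{i(x+y)/2}\sin((x-y)/2)$ yields
\[
\Delta = -64\, e^{2i(a+b+c)} \sin^2\!\tfrac{a-b}{2}\sin^2\!\tfrac{a-c}{2}\sin^2\!\tfrac{b-c}{2} \le 0,
\]
since the exponential factor equals $1$. This gives $f(\tr U) \le 0$ for elliptic $U$, with equality only in the degenerate subcase, and hence in particular establishes (iii): if $f(\tr U)<0$, then $U$ is neither parabolic (by (ii)) nor loxodromic (proved below), hence elliptic.

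For the loxodromic case, the eigenvalues of $L_\lambda$ are $\lambda,\ \bar\lambda/\lambda,\ 1/\bar\lambda$ with $|\lambda|\neq 1$. A short computation gives
\[
(\lambda_1-\lambda_2)(\lambda_2-\lambda_3)(\lambda_1-\lambda_3) = \frac{(|\lambda|^2-1)\bigl(2\Re(\lambda^3) - |\lambda|^2 - |\lambda|^4\bigr)}{|\lambda|^4},
\]
so its square is real. The factor $(|\lambda|^2-1)^2$ is strictly positive. The subtle point, which I expect to be the main obstacle, is to show that the second factor is nonzero; this follows from the AM--GM inequality $|\lambda|^2+|\lambda|^4\ge 2|\lambda|^3$, with equality only when $|\lambda|=1$, combined with $|\Re(\lambda^3)|\le |\lambda|^3$. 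Hence $2\Re(\lambda^3) < |\lambda|^2+|\lambda|^4$ strictly when $|\lambda|\neq 1$, and therefore $f(\tr U)>0$ for every loxodromic $U$.

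Collecting the three computations proves (ii) directly, the implication ``loxodromic $\Rightarrow f>0$'' and the converse (a $U$ with $f(\tr U)>0$ is neither parabolic nor elliptic, hence loxodromic) which together give (i), and finally (iii) as noted above.
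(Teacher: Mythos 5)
Your proposal is correct, and it supplies an actual argument where the paper gives none: the paper simply cites Theorem 6.2.4 of Goldman's book, remarking only that $f$ arises from the resultant of the characteristic polynomial $X^3 - zX^2 + \bar z X - 1$ with its derivative. Your identification of $f(z)$ with the discriminant of that polynomial is therefore exactly the intended origin of $f$, and your three normal-form computations check out: the discriminant formula gives $|z|^4 - 4(z^3+\bar z^3) + 18|z|^2 - 27 = f(z)$; the elliptic computation yields $-64\sin^2\tfrac{a-b}{2}\sin^2\tfrac{a-c}{2}\sin^2\tfrac{b-c}{2}\le 0$ since $e^{2i(a+b+c)}=1$; and the loxodromic positivity follows from $(|\lambda|^2-1)^2>0$ together with the AM--GM estimate $|\lambda|^2+|\lambda|^4>2|\lambda|^3\ge 2\Re(\lambda^3)$ for $|\lambda|\ne 1$. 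One harmless slip: the numerator in your loxodromic product should be $(|\lambda|^2-1)\bigl(|\lambda|^4+|\lambda|^2-2\Re(\lambda^3)\bigr)$, the negative of what you wrote, but this disappears upon squaring. The only logical ingredient you use beyond the computations is the trichotomy that every element of $\mathrm{SU}(2,1)$ is elliptic, parabolic, or loxodromic and that these classes are disjoint (in particular that a loxodromic element has no negative eigenvector); you invoke this correctly to convert the three one-way implications into statements (i) and (iii), and it is exactly the classification the paper records just before the proposition.
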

 Let $\Delta = f^{-1}(\RR^*_-) \subset \CC$ be the open bounded component of \Cref{fig:curve_goldman}. By the previous proposition, we know that
 $\beta(\mathcal{L}) = \CC \setminus \bar{\Delta}$ is the unbounded open component of \Cref{fig:curve_goldman}, while $\beta(\mathcal{E}) = \bar{\Delta}$ is the closed bounded component. Observe that the lifts of $\partial \Delta$ are precisely the orbits of elliptic elements having a double eigenvalue, and that the points $3$, $3 \omega$ and $3 \omega^2$, where $\omega$ is a cube root of $1$, are the projections of the center of $\mathrm{SU}(2,1)$. Thus, we know precisely the number of lifts of each point of $\CC$. The quantities are summarized in \Cref{table:nb_orbit_lift_SU21}. 
 
 In \Cref{table:nb_orbit_lift_SU21}, for each point $x \in \CC$ we also record the cardinalities of the cohomology sets $H^1_\tau(\galCR, \mathbf{G}_{\tilde{x}} )$, where $\mathbf{G}_{\tilde{x}}$ is the stabilizer in $\mathrm{SL}_3(\CC)$ of a lift $\tilde{x}$ of $x$ having a closed orbit.
 Observe that the injective maps $\varphi: \beta^{-1}(x) \to H^1_\tau(\galCR , \mathbf{G}_{\tilde{x}} )$ that we defined in \Cref{subsect:number_of_lifts}, are only surjective for loxodromic elements.
 
 For the topology of $\beta$, we observe that $\beta \vert_\mathcal{L}$ is one-to-one, mapping the punctured open disk $\mathcal{L}$ to $\CC \setminus \bar{\Delta}$.
 As for elliptic orbits in $\mathcal{E}$, the restriction of $\beta$ to each closed triangle of \Cref{fig:espace_he_regions} is  one-to-one to $\bar{\Delta}$. Observe that each segment of the boundary of \Cref{fig:curve_goldman} has exactly two lifts in $X \sslash G$: a segment corresponding to a reflection on a line, contained in $\overline{\mathcal{L}}$ and separating two triangles of $\mathcal{E}$, and a segment in the boundary of $\mathcal{E}$, corresponding to reflexions on points.

 \begin{table}
 	\centering
 	\[
 	\begin{array}{ccc}
 	\toprule
 	\text{Subset containing } x \in \CC & \abs{\beta^{-1}(x)} & \abs{H^1_\tau(\galCR , \mathbf{G}_{\tilde{x}} )} \\
 	\midrule
 	\Delta & 3 & 4 \\
 	\partial \Delta \setminus \{3,3\omega,3\omega^2 \} & 2 & 3 \\
 	\{3,3\omega,3\omega^2 \} & 1 & 2 \\
 	\CC \setminus \bar{\Delta} & 1 & 1 \\
 	\bottomrule
 	\end{array}
 	\]
 	\caption{Number of lifts in $X \sslash G$ of a point $x \in \CC \simeq \mathfrak{X}_{\mathrm{SU}(2,1)}(\ZZ)$, and the cardinal of the corresponding cohomology set $H^1$.}\label{table:nb_orbit_lift_SU21}
 \end{table}

\bibliographystyle{alpha}
\bibliography{character_varieties}

\end{document}